\newcommand{\twop}{\mbox{$2'$}}
\newcommand{\threep}{\mbox{$3'$}}
\newcommand{\fourp}{\mbox{$4'$}}
\newcommand{\sixp}{\mbox{$6'$}}
\newcommand{\ninep}{\mbox{$9'$}}
\newcommand{\bprime}{\mbox{$b'$}}
\newcommand{\cprime}{\mbox{$c'$}}
\newcommand{\dprime}{\mbox{$d'$}}
\newcommand{\ten}{$10$}
\newcommand{\ycenter}{\Yvcentermath1}
\numberwithin{equation}{section}
\newtheorem{theorem}{Theorem}[section]
\newtheorem{proposition}[theorem]{Proposition}
\newtheorem{conjecture}[theorem]{Conjecture}
\newtheorem{corollary}[theorem]{Corollary}
\newtheorem{lemma}[theorem]{Lemma}
\theoremstyle{definition}
\newtheorem{remark}[theorem]{Remark}
\newtheorem{example}[theorem]{Example}
\newtheorem{definition}[theorem]{Definition}
\newcommand{\word}{\operatorname{read}}
\newcommand{\mread}{\operatorname{mread}}
\newcommand{\stan}{\operatorname{stan}}
\newcommand{\pl}{\mathbf{P}}
\newcommand{\shpl}{\mathbf{S}}
\newcommand{\sh}{\operatorname{shape}}
\newcommand{\seq}{\operatorname{\equiv}}
\newcommand{\rect}{\operatorname{rect}}
\newcommand{\plam}{\mathcal{P}_{\lambda}}
\newcommand{\mix}{\operatorname{mix}}
\newcommand{\rsk}{\operatorname{RSK}}
\newcommand{\sk}{\operatorname{SK}}
\newcommand{\sw}{\operatorname{SW}}
\newcommand{\pr}{P_{\rsk}}
\newcommand{\qr}{Q_{\rsk}}
\newcommand{\pmix}{P_{\mix}}
\newcommand{\qmix}{Q_{\mix}}
\newcommand{\psk}{P_{\sk}}
\newcommand{\qsk}{Q_{\sk}}
\newcommand{\pw}{P_{\sw}}
\newcommand{\qw}{Q_{\sw}}
\def\ZZ{{\mathbb Z}}
\def\RR{{\mathbb R}}
\def\QQ{{\mathbb Q}}
\def\uu{{\mathbf u}}
\def\xx{{\mathbf x}}
\title[The shifted plactic monoid]{The shifted plactic monoid}
\author{Luis Serrano}
\address{\hspace{-.3in} Department of Mathematics, University of Michigan,
Ann Arbor, MI 48109, USA}
\email{lserrano@umich.edu}
\date{\today}
\thanks{Partially supported by NSF grant DMS-0555880 
and by an NSERC Postgraduate Scholarship.
}
\keywords{plactic monoid, shifted tableau, mixed insertion, Schur $P$-function, shifted Littlewood-Richardson rule.}
\begin{document}
\ \vspace{.4in}

\maketitle

%\vspace{.3in}

\begin{abstract}
We introduce a shifted analog of the plactic monoid of Lascoux and Sch\"utzenberger, the \emph{shifted plactic monoid}. It can be defined in two different ways: via the \emph{shifted Knuth relations}, or using Haiman's mixed insertion.

Applications include: a new combinatorial derivation (and a new version of) the shifted Littlewood-Richardson Rule; similar results for the coefficients in the Schur expansion of a Schur $P$-function; a shifted counterpart of the Lascoux-Sch\"utzenberger theory of noncommutative Schur functions in plactic variables; a characterization of shifted tableau words; and more.
\end{abstract}

%\vspace{.2in}

\tableofcontents

\section*{Introduction}

\noindent \textbf{The (shifted) plactic monoid}.
The celebrated \emph{Robinson-Schensted-Knuth correspondence} \cite{Sch} is a bijection between words in a linearly ordered alphabet \linebreak $X = \{ 1<2<3<\cdots \}$ and pairs of \emph{Young tableaux} with entries in $X$. More precisely, each word corresponds to a pair consisting of a semistandard \emph{insertion tableau} and a standard \emph{recording tableau}. The words producing a given insertion tableau form a \emph{plactic class}. A.~Lascoux and M.~P.~Sch\"utzenberger \cite{LS} made a crucial observation based on a result by D.~E. Knuth \cite{Kn}: the plactic classes $[u]$ and $[v]$ of two words $u$ and $v$ uniquely determine the plactic class $[uv]$ of their concatenation. This gives the set of all plactic classes (equivalently, the set of all semistandard Young tableaux) the structure of a \emph{plactic monoid} $\pl = \pl (X)$. This monoid has important applications in representation theory and the theory of symmetric functions; see, e.g.,~\cite{LLT}.

The main goal of this paper is to construct and study a proper analog of the plactic monoid for (semistandard) \emph{shifted Young tableaux}, with similar properties and similar applications. The problem of developing such a theory was already posed more than 20 years ago by B.~Sagan \cite{Sa}. Shifted Young tableaux are certain fillings of a \emph{shifted shape}
 (a shifted Young diagram associated with a strict partition) with letters in an alphabet $X' = \{ 1' < 1 < 2' < 2 < \cdots \}$; see, e.g.,~\cite{SaUbiq}. M.~Haiman~\cite{Ha} defined the (shifted) \emph{mixed insertion correspondence}, a beautiful bijection between permutations and pairs of standard shifted Young tableaux; each pair consists of the mixed insertion tableau and the mixed recording tableau. Haiman's correspondence is easily generalized (see Section~\ref{mainsection}) to a bijection between words in the alphabet $X$ and pairs consisting of a semistandard shifted mixed insertion tableau and a standard shifted mixed recording tableau. (We emphasize that this bijection deals with words in the original alphabet $X$ rather than the extended alphabet $X'$.) We define a \emph{shifted plactic class} as the set of all words which have a given mixed insertion tableau. Thus, shifted plactic classes are in bijection with shifted semistandard Young tableaux. The following key property, analogous to that of Lascoux and Sch\"utzenberger's in the ordinary case, holds (Theorem \ref{monoidtheorem}): the shifted plactic class of the concatenation of two words $u$ and $v$ depends only on the shifted plactic classes of $u$ and $v$. Consequently, one can define the \emph{shifted plactic monoid} $\shpl = \shpl(X)$ in which the product is, again, given by concatenation. In analogy with the classical case, we obtain a presentation of $\shpl$ by the quartic \emph{shifted Knuth (or shifted plactic) relations}. So two words are shifted Knuth-equivalent if and only if they have the same mixed insertion tableau.
 
Sagan \cite{Sa} and Worley \cite{Wo} have introduced the \emph{Sagan-Worley correspondence}, another analog of Robinson-Schensted-Knuth correspondence for shifted tableaux. In the case of permutations, Haiman \cite{Ha} proved that the mixed insertion correspondence is dual to Sagan-Worley's. In Section \ref{wordsection}, we use a semistandard version of this duality to describe shifted plactic equivalence in yet another way, namely: two words $u$ and $v$ are shifted plactic equivalent if and only if the recording tableaux of their inverses (as biwords) are the same.

\medskip
\noindent \textbf{(Shifted) Plactic Schur functions}.
The \emph{plactic algebra}~$\QQ \pl$ is the semigroup algebra of the plactic monoid. The shape of a plactic class is the shape of the corresponding tableau. A \emph{plactic Schur function} $\mathcal{S}_{\lambda} \in \QQ \pl$ is the sum of all plactic classes of shape $\lambda$; it can be viewed as a noncommutative version of the ordinary Schur function~$s_{\lambda}$. This notion was used by Sch\"utzenberger~\cite{Sc} to obtain a proof of the Littlewood-Richardson rule along the following lines. It can be shown that the plactic Schur functions span the ring they generate. Furthermore, this ring is canonically isomorphic to the ordinary ring of symmetric functions: the isomorphism simply sends each Schur function~$s_{\lambda}$ to its plactic counterpart~$\mathcal{S}_{\lambda}$. It follows that the Littlewood-Richardson coefficient~$c^{\lambda}_{\mu, \nu}$ is equal to the coefficient of a fixed plactic class~$T_{\lambda}$ of shape~$\lambda$ in the product of plactic Schur functions $\mathcal{S}_{\mu} \mathcal{S}_{\nu}$. In other words, $c^{\lambda}_{\mu, \nu}$ is equal to the number of pairs $(T_{\mu}, T_{\nu})$ of plactic classes of shapes $\mu$ and $\nu$ such that $T_{\mu} T_{\nu} = T_{\lambda}$. 

We develop a shifted counterpart of this classical theory. The \emph{shifted plactic algebra} $\QQ \shpl$ is the semigroup algebra of the shifted plactic monoid, and a (shifted) \emph{plactic Schur $P$-function} $\mathcal{P}_{\lambda} \in \QQ \shpl$ is the sum of all shifted plactic classes of a given shifted shape. We prove that the plactic Schur $P$-functions span the ring they generate, and this ring is canonically isomorphic to the ring spanned/generated by the ordinary Schur $P$-functions. Again, the isomorphism sends each Schur $P$-function~$P_{\lambda}$ to its plactic counterpart~$\mathcal{P}_{\lambda}$. This leads to a proof of the shifted Littlewood-Richardson rule (Corollary~\ref{slrrule}). Our version of the rule states that the coefficient $b^{\lambda}_{\mu, \nu}$ of $P_{\lambda}$ in the product $P_{\mu} P_{\nu}$ is equal to the number of pairs $(T_{\mu} , T_{\nu})$ of shifted plactic classes of shapes $\mu$ and $\nu$ such that $T_{\mu}T_{\nu} = T_{\lambda}$, where $T_{\lambda}$ is a fixed shifted plactic class of shape $\lambda$. The first version of the shifted Littlewood-Richardson rule was given by Stembridge \cite{St}. In Lemma \ref{slrrulesarethesame} we relate our rule to Stembridge's by a simple bijection.

It turns out that the shifted plactic relations are a ``relaxation" of the ordinary Knuth (= plactic) relations. More precisely, the tautological map $u \mapsto u$ that sends each word in the alphabet $X$ to itself descends to a monoid homomorphism $\shpl \rightarrow \pl$. By extending this map linearly, we obtain the following theorem (Corollary \ref{pschurintermsofschur}): For a shifted shape $\lambda$, the coefficient $g^{\lambda}_{\mu}$ of $s_{\mu}$ in the Schur expansion of $P_{\lambda}$ is equal to the number of shifted plactic classes of shifted shape $\lambda$ contained in a fixed plactic class of shape~$\mu$. A simple bijection (Theorem \ref{usstem}) recovers a theorem of Stembridge~\cite{St}: $g^{\lambda}_{\mu}$ is equal to the number of standard Young tableaux of shape $\mu$ which rectify to a fixed standard shifted Young tableau of shape $\lambda$.

%It turns out that the shifted plactic relations are a ``relaxation" of the ordinary Knuth (= plactic) relations. More precisely, the tautological map $u \mapsto u$ that sends each word in the alphabet $X$ to itself descends to a monoid homomorphism $\shpl \rightarrow \pl$. By extending this map linearly, we obtain the following theorem (Corollary \ref{pschurintermsofschur}): For a shifted shape $\theta$, the coefficient $g^{\theta}_{\mu}$ of $s_{\mu}$ in the Schur expansion of $P_{\theta}$ is equal to the number of shifted plactic classes of shifted shape $\theta$ contained in a fixed plactic class of shape~$\mu$. A simple bijection (Theorem \ref{usstem}) recovers a theorem of Stembridge~\cite{St}: $g^{\theta}_{\mu}$ is equal to the number of standard Young tableaux of shape $\mu$ which rectify to a fixed standard shifted Young tableau of shape $\theta$.
\medskip

\noindent \textbf{(Shifted) Tableau words}.
In the classical setting, an approach developed by Lascoux and his school begins with the plactic monoid as the original fundamental object, and identifies each tableau $T$ with a distinguished canonical representative of the corresponding plactic class, the \emph{reading word} $\word(T)$. This word is obtained by reading the rows of $T$ from left to right, starting from the bottom row and moving up. A word $w$ such that $w = \word(T)$ for some tableau $T$ is called a \emph{tableau word}. By construction, tableau words are characterized by the following property. Each of them is a concatenation $w = u_l u_{l-1} \cdots u_1$ of weakly increasing words $u_l, \ldots, u_1$, such that
\begin{itemize}
\item[(A)] for $1 \le i \le l-1$, the longest weakly increasing subword of~$u_{i+1} u_i$ is~$u_i$.
\end{itemize}
For a tableau word $w$, the lengths of the segments $u_i$ are precisely the row lengths of the Young tableau corresponding to $w$. 

We develop an analog of this approach in the shifted setting by taking the shifted plactic monoid as the fundamental object, and constructing a canonical representative for each shifted plactic class. Since shifted Young tableaux have primed entries while the words in their respective shifted plactic classes have not, the reading of a shifted Young tableau cannot be defined in as simple a manner as in the classical case. Instead, we define the \emph{mixed reading word} $\mread(T)$ of a shifted tableau $T$ as the unique word in the corresponding shifted plactic class that has a distinguished \emph{special recording tableau}. The latter notion is a shifted counterpart of P.~Edelman and C.~Greene's \emph{dual reading tableau} \cite{EG}.

A word $w$ such that $w = \mread(T)$ for some shifted Young tableau $T$ is called a \emph{shifted tableau word}. Such words have a characterizing property similar to~(A), with weakly increasing words replaced by \emph{hook words} (a hook word consists of a strictly decreasing segment followed by a weakly increasing one). In Theorem~\ref{tableauwordtheorem} and Proposition~\ref{ssdtproposition}, we prove that $w$ is a shifted tableau word if and only if it is a concatenation of hook words $u_l, \ldots, u_1$ such that
\begin{itemize}
\setcounter{enumi}{1}
\item[(B)] for $1 \le i \le l-1$, the longest hook subword of~$u_{i+1} u_i$ is~$u_i$.
\end{itemize}
For a shifted tableau word $w$, the lengths of the segments $u_i$ are precisely the row lengths of the shifted Young tableau corresponding to $w$.

\medskip
\noindent \textbf{Semistandard decomposition tableaux}.
The proofs of our main results make use of the following machinery. Building on the concept of \emph{standard decomposition tableaux} introduced by W.~Kra\'skiewicz \cite{Kr} and further developed by T.~K.~Lam \cite{TKL}, we define a (shifted) \emph{semistandard decomposition tableau} (SSDT) $R$ of shifted shape $\lambda$ as a filling of $\lambda$ by entries in $X$ such that the rows $u_1, u_2, \ldots, u_l$ of $R$ are hook words satisfying~(B). We define the \emph{reading word} of $R$ by $\word(R) = u_l u_{l-1} \cdots u_1$, that is, by reading the rows of $R$ from left to right, starting with the bottom row and moving up.

As a semistandard analog of Kra\'skiewicz's correspondence \cite{Kr}, we develop the \emph{SK~correspondence} (see Definition \ref{skinsertion}). This is a bijection between words in the alphabet $X$ and pairs of tableaux with entries in $X$. Every word corresponds to a pair consisting of an SSDT  called the \emph{SK insertion tableau} and a standard shifted Young tableau called the \emph{SK recording tableau}. We prove (Theorem \ref{samerecordingtableau}) that the mixed recording tableau and the SK recording tableau of a word $w$ are the same. Furthermore, we construct (see Theorem \ref{bijection}) a bijection $\Phi$ between SSDT and shifted Young tableaux of the same shape that preserves the reading word: $\word(R) = \mread(\Phi(R))$. In light of the conditions (A) and (B) above, one can see that the counterpart of an SSDT in the ordinary case is nothing but a semistandard Young tableau.

\medskip

\noindent \textbf{Outline}.
We state our results in sections \ref{mainsection}-\ref{moreresults}, relegating the proofs to section \ref{proofssection}. Section \ref{mainsection} contains the descriptions of the shifted plactic monoid, and its main applications. Section \ref{moreresults} gives a characterization of shifted tableau words, and a description of them in terms of semistandard decomposition tableaux. Section \ref{proofssection} contains the proofs of the main theorems.

\medskip

\noindent \textbf{Acknowledgements}
I am grateful to Sergey Fomin for suggesting the problem and for his comments on the earlier versions of the paper. I would also like to thank Marcelo Aguiar, Curtis Greene, Peter Hoffman, Tadeusz J\'ozefiak, Alain Lascoux, Thomas Lam, Cedric Lecouvey, Pavlo Pylyavskyy, Bruce Sagan, John Stembridge, and Alex Yong for helpful and inspiring conversations.

\section{Main results}\label{mainsection}

    \subsection{Preliminaries}\textbf{Plactic monoid, shifted Young tableaux and the mixed insertion}

We assume the reader's familiarity with the basic theory of the ordinary plactic monoid \cite{LLT}; the goal of the swift review given in the next paragraph is mainly to introduce notation.

For a word $w = w_1 w_2 \cdots w_n$ in the alphabet $X$, let $\pr(w)$ and $\qr(w)$ denote its Robinson-Schensted-Knuth insertion and recording tableaux. Two words $u$ and $v$ in the alphabet $X$ are \emph{plactic equivalent} if $\pr(u) = \pr(v)$. Knuth \cite{Kn} has proved that the latter holds if and only if $u$ and $v$ are equivalent modulo the \emph{plactic relations}
\begin{equation}\label{k1}
acb \sim cab \quad \mbox{for} \quad a \le b < c \quad \mbox{in $X$},
\end{equation}
\begin{equation}\label{k2}
bca \sim bac \quad \mbox{for} \quad a < b \le c \quad \mbox{in $X$}.
\end{equation}

These relations define the \emph{plactic  monoid} $\mathbf{P} = \mathbf{P}(X)$ of Lascoux and Sch\"utzenberger~\cite{LS}.

A \emph{plactic class} is an equivalence class under plactic equivalence. The plactic class of a word $u$ in the alphabet $X$ is denoted $ \langle u \rangle$. Thus, $\pl$ is the set of plactic classes where multiplication is given by $\langle u \rangle \langle v \rangle = \langle uv \rangle $. Equivalently, it is generated by the symbols in $X$ subject to relations (\ref{k1})--(\ref{k2}).

A \emph{strict partition} is a sequence $\lambda = (\lambda_1, \lambda_2, \ldots, \lambda_l) \in \ZZ^l$ such that $\lambda_1 > \lambda_2 > \cdots > \lambda_l > 0$. The \emph{shifted diagram}, or \emph{shifted shape} of $\lambda$ is an array of square cells in which the $i$-th row has $\lambda_i$ cells, and is shifted $i-1$ units to the right with respect to the top row.

Throughout this paper, we identify a shifted shape corresponding to a strict partition~$\lambda$ with $\lambda$ itself.

The \emph{size} of~$\lambda$ is~$| \lambda | = \lambda_1 + \lambda_2 + \cdots + \lambda_l$. We denote $\ell(\lambda) = l$, the number of rows.

To illustrate, the shifted shape~$\lambda = (5,3,2)$, with~$| \lambda | = 10$ and~$\ell(\lambda) = 3$, is shown below:
\[
 \young(~~~~~,:~~~,::~~).
\]

A \emph{skew shifted diagram} (or shape) $\lambda / \mu$ is obtained by removing a shifted shape $\mu$ from a larger shape $\lambda$ containing $\mu$.

A \emph{(semistandard) shifted Young tableaux} $T$ of shape $\lambda$ is a filling of a shifted shape~$\lambda$ with letters from the alphabet $X' = \{1' < 1 < 2' < 2 < \cdots \}$ such that:
\begin{itemize}
\item rows and columns of $T$ are weakly increasing;
\item each $k$ appears at most once in every column;
\item each $k'$ appears at most once in every row;
\item there are no primed entries on the main diagonal.
\end{itemize}
If $T$ is a filling of a shape $\lambda$, we write $\sh(T) = \lambda$.

A \emph{skew shifted Young tableau} is defined analogously.

The \emph{content} of a tableau $T$ is the vector $(a_1, a_2, \ldots)$, where~$a_i$ is the number of times the letters~$i$ and~$i'$ appear in $T$.

\begin{example}\label{shiftedtableauexample}
The shifted Young tableau
\[ \ycenter
T= \young(112{\threep}4,:455,::6{\ninep})
\]
has shape $\lambda = (5,3,2)$ and content $(2,1,1,2,2,1,0,0,1)$.
\end{example}

A tableau $T$ lf shape $\lambda$ is called \emph{standard} if it contains each of the entries $1, 2, \ldots, |\lambda|$ exactly once. In particular, standard shifted Young tableaux have no primed entries. Note that a standard shifted tableau has content $(1,1,\ldots,1)$.

M.~Haiman \cite{Ha} has introduced \emph{shifted mixed insertion}, a remarkable correspondence between permutations and pairs of shifted Young tableaux. Haiman's construction can be viewed as a shifted analog of the Robinson-Schensted-Knuth correspondence.

The following is a semistandard generalization of shifted mixed insertion, which we call \emph{semistandard shifted mixed insertion}. It is a correspondence between words in the alphabet $X$ and pairs of shifted Young tableaux, one of them semistandard and one standard. Throughout this paper we refer to semistandard shifted mixed insertion simply as \emph{mixed insertion}.

\begin{definition}[Mixed insertion]
Let $w = w_1 \ldots w_n$ be a word in the alphabet $X$. We recursively construct a sequence $(T_0,U_0), \ldots, (T_n, U_n) = (T,U)$ of tableaux, where $T_i$ is a shifted Young tableau, and $U_i$ is a standard shifted Young tableau, as follows. Set $(T_0, U_0) = (\emptyset, \emptyset)$. For $i = 1, \ldots, n$, insert $w_i$ into $T_{i-1}$ in the following manner:

Insert $w_i$ into the first row, bumping out the smallest element $a$ that is strictly greater than $w_i$ (in the order given by the alphabet $X'$).
\begin{enumerate}
\item if $a$ is not on the main diagonal, do as follows:
\begin{enumerate}
\item if $a$ is unprimed, then insert it in the next row, as explained above;
\item if $a$ is primed, insert it into the next column to the right, bumping out the smallest element that is strictly greater than $a$;
\end{enumerate}
\item if $a$ is on the main diagonal, then it must be unprimed. Prime it, and insert it into the next column to the right.
\end{enumerate}
The insertion process terminates once a letter is placed at the end of a row or column, bumping no new element. The resulting tableau is~$T_i$.

The shapes of $T_{i-1}$ and $T_i$ differ by one box. Add that box to $U_{i-1}$, and write $i$ into it to obtain $U_i$.

We call $T$ the \emph{mixed insertion tableau} and $U$ the \emph{mixed recording tableau}, and denote them $\pmix(w)$ and $\qmix(w)$, respectively.
\end{definition}

\begin{example}\label{mixedinsertionexample1}
The word $u = 3415961254$ has the following mixed insertion and recording tableau
\[
\ycenter
\pmix(u) = \young(112{\threep}4,:455,::6{\ninep}) \qquad \qmix(w) = \young(12459,:368,::7{\ten}).
\]
\end{example}

    \subsection{The shifted plactic monoid}

Theorem \ref{monoidtheorem} below is a shifted analog of the plactic relations (\ref{k1})--(\ref{k2}) \cite{Kn}. It can be considered a semistandard generalization of results by Haiman~\cite{Ha} and by Kra\'skiewicz~\cite{Kr}.

\begin{theorem}\label{monoidtheorem}
Two words have the same mixed insertion tableau if and only if they are equivalent modulo the following relations:
\begin{equation}\label{sk1}
\ycenter
abdc \seq adbc    \quad \mbox{for} \quad a \le b \le c < d \quad \mbox{in $X$};
\end{equation}
\begin{equation}\label{sk2}
\ycenter
acdb \seq acbd    \quad \mbox{for} \quad a \le b < c \le d \quad \mbox{in $X$};
\end{equation}
\begin{equation}\label{sk3}
\ycenter
dacb \seq adcb    \quad \mbox{for} \quad a \le b < c < d \quad \mbox{in $X$};
\end{equation}
\begin{equation}\label{sk4}
\ycenter
badc \seq bdac    \quad \mbox{for} \quad a < b \le c<d \quad \mbox{in $X$};
\end{equation}
\begin{equation}\label{sk5}
\ycenter
cbda \seq cdba    \quad \mbox{for} \quad a<b<c \le d \quad \mbox{in $X$};
\end{equation}
\begin{equation}\label{sk6}
\ycenter
dbca \seq bdca    \quad \mbox{for} \quad a<b \le c<d \quad \mbox{in $X$};
\end{equation}
\begin{equation}\label{sk7}
\ycenter
bcda \seq bcad    \quad \mbox{for} \quad a<b \le c \le d \quad \mbox{in $X$};
\end{equation}
\begin{equation}\label{sk8}
\ycenter
cadb \seq cdab    \quad \mbox{for} \quad a \le b<c \le d \quad \mbox{in $X$}.
\end{equation}
Consequently, the mixed insertion tableau of a concatenation of two words is uniquely determined by their mixed insertion tableaux.
\end{theorem}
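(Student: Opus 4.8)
The plan is to prove the "if and only if" by showing two things: (1) each of the relations \eqref{sk1}--\eqref{sk8} preserves the mixed insertion tableau, and (2) if two words have the same mixed insertion tableau then they are connected by a sequence of these relations. The final sentence of the statement is then an immediate corollary of direction (2): if $\pmix(u)=\pmix(u')$ and $\pmix(v)=\pmix(v')$, then $u\seq u'$ and $v\seq v'$, hence $uv\seq u'v'$ by concatenating the relation sequences (the relations are local, so they may be applied inside a longer word), and therefore $\pmix(uv)=\pmix(u'v')$ by direction (1). So it suffices to establish (1) and (2) for the relations themselves.

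For direction (1), the natural approach is a direct verification. Each relation \eqref{sk1}--\eqref{sk8} involves a four-letter pattern, and by a standard locality argument it is enough to check that mixed-inserting the two sides into an \emph{arbitrary} tableau $T$ yields the same result — because the bumping routes of letters outside the pattern are unaffected once we know the pattern's two sides produce the same insertion tableau and the same sequence of added boxes. Concretely, I would insert the first letter (common to both sides, e.g.\ $a$ in \eqref{sk1}) into $T$, obtaining some $T'$, and then compare the effect of inserting $bdc$ versus $dbc$ into $T'$. Because the relevant entries are confined to the first row or two of $T'$ and the primed/diagonal bumping rules are deterministic, this reduces to a finite case analysis on the relative positions and priming of the bumped entries. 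This is the routine-but-lengthy part; Haiman's and Kra\'skiewicz's analogous verifications in the standard case provide the template, and the semistandard order $1'<1<2'<2<\cdots$ is handled by treating weak versus strict inequalities carefully.

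For direction (2) — the harder direction and the main obstacle — I would not argue directly on words but instead pass through semistandard decomposition tableaux and the SK correspondence introduced later in the paper. The key external inputs are: the bijection $\Phi$ between SSDT and shifted Young tableaux preserving reading words, the coincidence of the mixed recording tableau with the SK recording tableau, and the characterization of SSDT reading words via condition (B). Granting these, the strategy is: (i) show that the relations \eqref{sk1}--\eqref{sk8} are exactly what is needed to transform any word $w$ into the reading word $\word(R)$ of its SK insertion tableau $R = \psk(w)$ — i.e.\ every word is equivalent to its "straightening" — by an induction that peels off a longest hook subword at a time, using the relations to sort the word into hook-word rows satisfying (B); and (ii) observe that $\word(R)$ is then a canonical representative, so $u\seq v$ whenever $\psk(u)=\psk(v)$, and finally identify $\psk(w)$ with $\pmix(w)$ via $\Phi$. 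The delicate point is step (i): one must check that the eight relations suffice to perform each elementary sorting move — inserting a single letter into a hook word and re-splitting into two hook words — which is where the precise form of \eqref{sk1}--\eqref{sk8} (and the fact that there are exactly eight of them, matching the ways a four-letter subword can fail to be "hook-sorted") gets used. I expect the bulk of the work, and the real content of the theorem, to lie in verifying that this finite repertoire of moves is complete for the straightening algorithm.
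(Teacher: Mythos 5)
Your strategy is sound in outline, but it is genuinely different from --- and considerably heavier than --- the route the paper actually takes. The paper's proof is a short reduction to the case of permutations via standardization: Lemma \ref{stanword} shows that two words are equivalent modulo \eqref{sk1}--\eqref{sk8} if and only if they have the same content and their standardizations are (this is the only place the eight relations are checked by hand, and only against the finitely many content types of each pattern); Lemma \ref{stansyt} shows that standardization commutes with mixed insertion, i.e.\ $\pmix(\stan(w))=\stan(\pmix(w))$; and for permutations the equivalence ``same mixed insertion tableau $\Leftrightarrow$ related by the relations'' is quoted from Haiman's dual-equivalence paper \cite[Corollary 3.2]{HaD} (equivalently, Kra\'skiewicz's $B$-Coxeter--Knuth relations). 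Chaining these three facts gives both directions at once, with no insertion-into-an-arbitrary-tableau case analysis and no straightening algorithm. Your plan, by contrast, is essentially the classical Knuth-style argument redone from scratch in the semistandard shifted setting: direction (1) requires the full case analysis of bumping routes (which the standardization lemmas let the paper avoid), and direction (2) requires proving completeness of the relations for the SK straightening moves, which is precisely the content of Kra\'skiewicz's theorem that the paper imports rather than reproves. One caution if you pursue your route: the SK machinery you propose to ``grant'' (Theorem \ref{wordskequivalent}, the bijection $\Phi$, the identification $\psk\leftrightarrow\pmix$) is itself established in the paper by the same standardization-plus-\cite{Kr}/\cite{TKL} reductions, and some of those statements are phrased in terms of shifted plactic equivalence as defined \emph{after} Theorem \ref{monoidtheorem}; you would need to develop the SK straightening purely at the level of the relations \eqref{sk1}--\eqref{sk8} to avoid circularity. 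What your approach would buy, if carried out, is a self-contained combinatorial proof not resting on the dual-equivalence literature; what the paper's approach buys is brevity and a clean separation of the semistandard bookkeeping from the hard standard-case combinatorics.
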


\begin{remark}\label{placticremark}
As noted in \cite{Schutz}, the plactic relations (\ref{k1})--(\ref{k2}) can be understood in the following way. Let us call $w = w_1 \cdots w_n$ a \emph{line word} if
\[
w_1 > w_2 > \cdots > w_n
\]
or
\[
w_1 \le w_2 \le \cdots \le w_n.
\]
Line words are precisely those words $w$ for which the shape of $\pr(w)$ is a single row or a single column.

In this language, the relations (\ref{k1})--(\ref{k2}) can be stated as follows. Two 3-letter words $w$ and $w'$ in the alphabet $X$ are plactic equivalent if and only if:
\begin{itemize}
\item $w$ and $w'$ differ by an adjacent transposition, and
\item neither $w$ nor $w'$ is a line word.
\end{itemize}

The relations (\ref{sk1})--(\ref{sk8}) are called the \emph{shifted plactic relations}, and can be described in a similar way. Define a \emph{hook word} as a word $w = w_1 \cdots  w_l$ such that for some $1 \le k \le l$, we have
\begin{equation}\label{hookequation}
w_1 > w_2 > \cdots > w_k \le w_{k+1} \le \cdots \le w_l.
\end{equation}
It is easy to see that $w$ is a hook word if and only if $\pmix(w)$ consists of a single row.

Now, the shifted plactic relations (\ref{sk1})--(\ref{sk8}) are precisely the relations $w \seq w'$ in which:
\begin{itemize}
\item $w$ and $w'$ are plactic equivalent 4-letter words, and
\item neither $w$ nor $w'$ is a hook word.
\end{itemize}
\end{remark}

\begin{definition}
Two words $u$ and $v$ in the alphabet $X$ are \emph{shifted plactic equivalent} (denoted $u \seq v$) if they have the same mixed insertion tableau. By Theorem \ref{monoidtheorem}, $u$ and $v$ are shifted plactic equivalent if they can be transformed into each other using the shifted plactic relations~(\ref{sk1})--(\ref{sk8}).

A \emph{shifted plactic class} is an equivalence class under the relation $\seq$. The shifted plactic class containing a word $w$ is denoted by $[w]$. We can identify a shifted plactic class with the corresponding shifted Young tableau $T = \pmix(w)$, and write $[T] = [w]$.
\end{definition}
The Appendix at the end of the paper shows all kinds of shifted plactic classes of 4-letter words, and the corresponding tableau.
%For 4-letter words, Proposition \ref{refinementprop} is illustrated in the Appendix.% For some 5-letter words, in Example \ref{figureexample}.

\begin{example}\label{figureexample}
Figure \ref{shiftedplacticfigure} shows the shifted plactic classes of 5-letter words of content $(3,2)$, while Figure \ref{placticfigure} shows the plactic classes of the same. 
%Figure \ref{placticfigure} illustrates the shifted plactic classes of 5-letter words of content $(3,2)$, in the following way: the lines between two words indicate plactic equivalence, while the solid lines indicate shifted plactic equivalence. The inner boxes contain words in the same shifted plactic class, and the outer boxes contain words in the same plactic class. The tableau corresponding to each class is located beside each box.
\end{example}

\begin{figure}[htbp]
\begin{center}
\begin{picture}(300,145)(0,0)

%MAKE SIMPLE BOX WITH ONE WORD
%\put(0,30){\framebox(60,30)}
%\put(15,40){\makebox{11122}}

%boxes
\put(-15,120){\framebox(60,30)}
\put(55,120){\framebox(200,30)}
\put(265,120){\framebox(60,30)}
%words and lines between
\put(0,130){\makebox{11122}}
%\put(35,145){\line(1,0){30}}
\put(70,130){\makebox{11221}}
\put(105,135){\line(1,0){30}}
\put(140,130){\makebox{11212}}
\put(175,135){\line(1,0){30}}
\put(210,130){\makebox{12112}}
%\put(245,145){\line(1,0){30}}
\put(280,130){\makebox{21112}}
%tableau underneath
\put(-15,98){\young(11122)}
\put(140,85){\young(1112,:2)}
\put(265,98){\young(111{\twop}2)}

%boxes
\put(-15,30){\framebox(200,30)}
\put(195,30){\framebox(130,30)}
%words and lines between
\put(0,40){\makebox{22111}}
\put(35,45){\line(1,0){30}}
\put(70,40){\makebox{21211}}
\put(105,45){\line(1,0){30}}
\put(140,40){\makebox{21121}}
%\put(175,45){\line(1,0){30}}
\put(210,40){\makebox{12121}}
\put(245,45){\line(1,0){30}}
\put(280,40){\makebox{12211}}
%tableau underneath
\put(70,-5){\young(111{\twop},:2)}
\put(245,-5){\young(111,:22)}

\end{picture}
\end{center}
\caption{Shifted plactic classes of words of content $(3,2)$. Each box contains a shifted plactic class, with the edges representing shifted plactic relations; the corresponding shifted tableau is shown underneath.} 
\label{shiftedplacticfigure}
\end{figure}
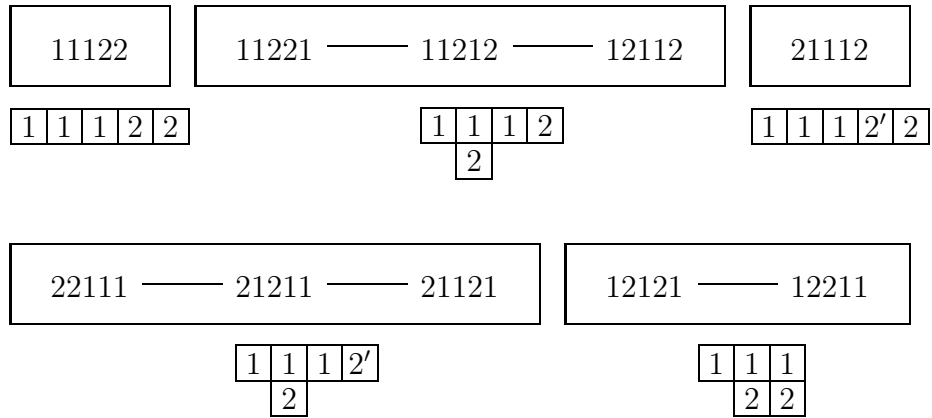

\begin{figure}[htbp]
\begin{center}
\begin{picture}(300,142.5)(0,0)

%MAKE SIMPLE BOX WITH ONE WORD
%\put(0,30){\framebox(60,30)}
%\put(15,40){\makebox{11122}}

%boxes
\put(-15,120){\framebox(60,30)}
\put(55,120){\framebox(270,30)}
%words and lines between
\put(0,130){\makebox{11122}}
%\put(35,145){\line(1,0){30}}
\put(70,130){\makebox{11221}}
\put(105,135){\line(1,0){30}}
\put(140,130){\makebox{11212}}
\put(175,135){\line(1,0){30}}
\put(210,130){\makebox{12112}}
\put(245,135){\line(1,0){30}}
\put(280,130){\makebox{21112}}
%tableau underneath
\put(-15,98){\young(11122)}
\put(175,85){\young(1112,2)}

%boxes
\put(-15,30){\framebox(340,30)}
%words and lines between
\put(0,40){\makebox{22111}}
\put(35,45){\line(1,0){30}}
\put(70,40){\makebox{21211}}
\put(105,45){\line(1,0){30}}
\put(140,40){\makebox{21121}}
\put(175,45){\line(1,0){30}}
\put(210,40){\makebox{12121}}
\put(245,45){\line(1,0){30}}
\put(280,40){\makebox{12211}}
%tableau underneath
\put(140,-5){\young(111,22)}

\end{picture}
\end{center}
\caption{Plactic classes of words of content $(3,2)$. Each box contains a plactic class, with the edges representing plactic relations; the corresponding tableau is shown underneath.} 
\label{placticfigure}
\end{figure}
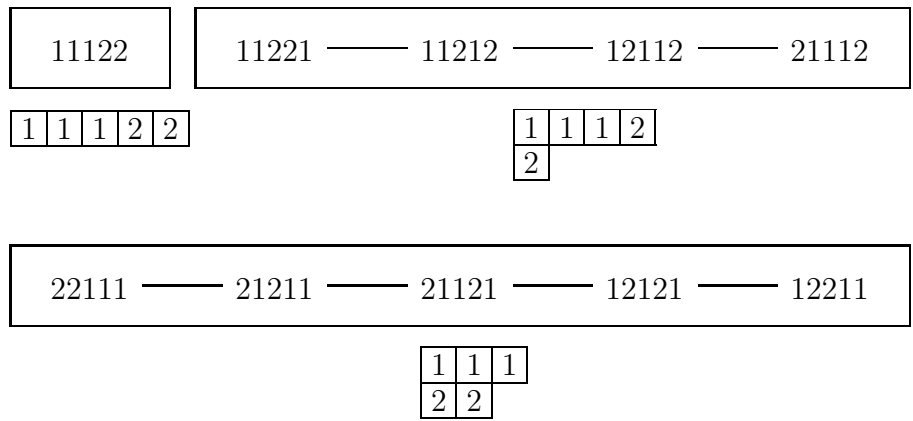

The following proposition can be verified by direct inspection.

\begin{proposition}\label{refinementprop}
Shifted plactic equivalence is a refinement of plactic equivalence. That is, each plactic class is a disjoint union of shifted plactic classes. To put it yet another way: if two words are shifted plactic equivalent, then they are plactic equivalent.
\end{proposition}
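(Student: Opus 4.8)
The plan is to deduce the proposition directly from the two presentations established earlier, without re-examining insertion algorithms. By Theorem~\ref{monoidtheorem}, two words are shifted plactic equivalent precisely when one can be obtained from the other by a sequence of the shifted plactic relations~(\ref{sk1})--(\ref{sk8}). Since plactic equivalence is likewise generated (by Knuth's theorem) by the relations~(\ref{k1})--(\ref{k2}), it suffices to check that each individual shifted plactic relation is a consequence of the ordinary plactic relations; the claim then follows because a chain of shifted plactic moves between $u$ and $v$ becomes a chain of ordinary plactic moves.

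First I would invoke Remark~\ref{placticremark}: each of the relations~(\ref{sk1})--(\ref{sk8}) is of the form $w \seq w'$ where $w$ and $w'$ are \emph{plactic equivalent} $4$-letter words (and neither is a hook word). So the verification that shifted plactic equivalence refines plactic equivalence is essentially built into the description of the relations. To be self-contained, I would spell this out for one or two representative relations: for instance, in~(\ref{sk1}), $abdc \seq adbc$ with $a \le b \le c < d$; applying~(\ref{k1}) to the last three letters $bdc$ (which has the pattern $acb$ with $b \le c < d$, so $bdc \sim dbc$) gives $abdc \sim adbc$, confirming the two sides are plactic equivalent. The remaining seven relations are handled the same way by locating, in each four-letter word, a three-letter factor to which~(\ref{k1}) or~(\ref{k2}) applies; this is the routine case-check that Remark~\ref{placticremark} already asserts, so I would either present it compactly in a small table or simply cite the remark.

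Then I would assemble the argument: if $u \seq v$, write a sequence $u = w^{(0)}, w^{(1)}, \ldots, w^{(m)} = v$ in which consecutive words differ by one shifted plactic relation applied to a four-letter factor. By the previous paragraph each such step replaces that factor by a plactic-equivalent factor, hence $w^{(i)}$ and $w^{(i+1)}$ are plactic equivalent (plactic equivalence is a congruence, so it survives embedding the local move into the ambient word). Transitivity gives $u$ plactic equivalent to $v$, i.e.\ $\pr(u) = \pr(v)$. This shows every shifted plactic class lies inside a single plactic class, and since the shifted plactic classes partition the set of words, each plactic class is a disjoint union of shifted plactic classes.

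There is no serious obstacle here; the only thing requiring care is the bookkeeping in the eight-relation case-check, and even that is redundant given Remark~\ref{placticremark}, which was stated precisely so that relations~(\ref{sk1})--(\ref{sk8}) are manifestly among the plactic-equivalent four-letter pairs. Indeed the phrase ``can be verified by direct inspection'' in the statement signals that the author intends exactly this: inspect the eight relations (or the Appendix of four-letter classes) and observe each is plactically trivial.
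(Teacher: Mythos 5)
Your proposal is correct and matches the paper's intent: the paper's entire proof is the phrase ``can be verified by direct inspection,'' meaning exactly the check you carry out — each of the eight relations~(\ref{sk1})--(\ref{sk8}) is an instance of a Knuth move~(\ref{k1}) or~(\ref{k2}) applied to a three-letter factor (as Remark~\ref{placticremark} already records), and since plactic equivalence is a congruence, any chain of shifted plactic moves is a chain of plactic moves. Your sample verifications are accurate, so the argument is sound and essentially identical to the paper's.
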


Proposition \ref{refinementprop} can be illustrated by comparing Figures \ref{shiftedplacticfigure} and \ref{placticfigure}.

\begin{definition}
The \emph{shifted plactic monoid} $\mathbf{S} = \mathbf{S}(X)$ is the set of shifted plactic classes with multiplication given by $[u][v] = [uv]$. (This multiplication is well defined by Theorem \ref{monoidtheorem}.) Equivalently, the monoid is generated by the symbols in $X$ subject to the relations~(\ref{sk1})--(\ref{sk8}).
\end{definition}

Alternatively, identifying each shifted plactic class with the corresponding shifted Young tableau, we obtain a notion of a (shifted plactic) associative product on the set of shifted tableaux.

The \emph{shape} of a shifted plactic class is defined as the shape of the corresponding shifted Young tableau.

The \emph{shifted plactic algebra} $\QQ \shpl$ is the semigroup algebra of the plactic monoid.

\begin{remark}
We normally consider $X$ as an infinite alphabet, but the totally analogous theory holds for any finite alphabet $X_n = \{ 1 < 2 < \cdots < n \}$.
\end{remark}

\subsection{Plactic Schur $P$-functions and their applications}
For a shifted Young tableau $T$ with content $(a_1, a_2, \ldots)$, we denote the corresponding monomial by
\[
x^T = x_1^{a_1} x_2^{a_2} \cdots.
\]

For each strict partition $\lambda$, the \emph{Schur $P$-function} is defined as the generating function for shifted Young tableaux of shape $\lambda$, namely
\[
P_{\lambda} = P_{\lambda} (x_1, x_2, \ldots) = \sum_{\sh (T) = \lambda} x^T.
\]
The \emph{Schur $Q$-function} is given by
\[
Q_{\lambda} = Q_{\lambda} (x_1, x_2, \ldots) = 2^{\ell(\lambda)} P_{\lambda},
\]
or equivalently, as the generating function for a different kind of shifted Young tableaux, namely those in which the elements in the main diagonal are allowed to be primed. For partitions of only one part, it is common to denote $P_{(k)}$ by $p_k$ and $Q_{(k)}$ by $q_k$.

The \emph{skew Schur $P$- and $Q$-functions}~$P_{\lambda / \mu}$ and~$Q_{\lambda / \mu} = 2^{\ell(\lambda) - \ell(\mu)} P_{\lambda / \mu}$ are defined similarly, for a skew shifted shape~$\lambda / \mu$. 

The following is an example of a Schur $P$-function in two variables:

\begin{example}\label{pschurexample}
For $\lambda = (3,1)$,
\[
\begin{array}{cccccccccccccc}
P_{\lambda}(x_1,x_2)    & =    & x_1^3 x_2    & +     & x_1^2 x_2^2    & + & x_1^2 x_2^2    & +    & x_1 x_2^3. \\[.1in] 
    &    & \young(111,:2)    &     & \young(11{\twop},:2)    &     & \young(112,:2)    &     & \young(1{\twop}2,:2)
\end{array}
\]
\end{example}

The Schur $P$- and $Q$-Schur functions form bases for an important subring $\Omega$ of the ring $\Lambda$ of symmetric functions.
    
The \emph{shifted Littlewood-Richardson coefficients}, $b^{\lambda}_{\mu, \nu}$ are of great importance in combinatorics, algebraic geometry, and representation theory. They appear in the expansion of the product of two Schur $P$-functions,
\[
P_{\mu} P_{\nu} = \sum_{\lambda} b^{\lambda}_{\mu, \nu} P_{\lambda}
\]
and also in the expansion of a skew Schur $Q$-function
\[
Q_{\lambda / \mu} = \sum_{\nu} b^{\lambda}_{\mu, \nu} Q_{\nu}.
\]
The latter can be rewritten as
\[
P_{\lambda / \mu} = \sum_{\nu} 2^{\ell(\mu) + \ell(\nu) - \ell(\lambda)} b^{\lambda}_{\mu, \nu} P_{\nu}.
\]

\begin{definition}
A \emph{shifted plactic Schur $P$-function} $\plam \in \QQ \shpl$ is defined as the sum of all shifted plactic classes of shape $\lambda$. More specifically, 
\[
\plam = \sum_{\sh(T) = \lambda} [T].
\]
\end{definition}

\begin{example}\label{placticschurexample}
We represent each shifted plactic class as $[w]$, for some representative~$w$, to obtain
\[
\begin{array}{cccccccccccccc}
\mathcal{P}_{(3,1)}    & =    & [1211]    & +     & [2211]    & + & [1212]    & +    & [2212]. \\[.1in] 
    &    & \young(111,:2)    &     & \young(11{\twop},:2)    &     & \young(112,:2)    &     & \young(1{\twop}2,:2)
\end{array}
\]
The reader can check that each word gets mixed inserted into the tableau underneath, making it a valid representative of its corresponding plactic class.
\end{example}
One can see that the $\plam$ are noncommutative analogs of the Schur $P$-functions. In the last example, $\mathcal{P}_{(3,1)}$ is a noncommutative analog of $P_{(3,1)}(x_1, x_2) = x_1^3 x_2 + 2 x_1^2 x_2^2 + x_1 x_2^3$.

\begin{theorem}\label{commutetheorem}
The map $P_{\lambda} \mapsto \plam$ extends to a canonical isomorphism between the algebra generated by the ordinary and shifted plactic Schur $P$-functions, respectively. As a result, the $\plam$ commute pairwise, span the ring they generate, and multiply according to the \emph{shifted Littlewood-Richardson rule}:
\begin{equation}\label{placticlreq}
\mathcal{P}_{\mu} \mathcal{P}_{\nu} = \sum_{\lambda} b^{\lambda}_{\mu, \nu} \plam .
\end{equation}
\end{theorem}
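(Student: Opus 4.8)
The strategy is to mimic Sch\"utzenberger's classical proof that the plactic Schur functions span a commutative ring isomorphic to $\Lambda$, transporting each step across to the shifted setting. There are three things to establish: (i) the $\plam$ commute pairwise; (ii) they span the ring they generate, so that products re-expand in the $\plam$ with some coefficients $\tilde b^\lambda_{\mu,\nu}$; and (iii) those coefficients agree with the ordinary shifted Littlewood--Richardson coefficients $b^\lambda_{\mu,\nu}$. Granting (i)--(iii), the assignment $P_\lambda \mapsto \plam$ extends to an algebra map from $\Omega$ onto the span of the $\plam$, and since the $P_\lambda$ are linearly independent and multiply by the $b^\lambda_{\mu,\nu}$, this map is a ring isomorphism, giving \eqref{placticlreq}.

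\textbf{Step 1: a generating-function identity in $\QQ\shpl$.} The key computational input is that the formal sum of \emph{all} shifted plactic classes, graded by content, behaves like a product of one-row generating functions. Concretely, for a single box one has $\mathcal{P}_{(k)} = \sum x^{?}$\,-type objects; more precisely I would introduce the element $\Omega_X = \sum_{w} [w] \, x^{\text{content}(w)}$ in $\QQ\shpl[[x_1,x_2,\dots]]$ and show, using the mixed-insertion bijection (each word $w$ corresponds to a pair $(\pmix(w),\qmix(w))$ with $\pmix(w)=\pmix$ of shape $\sh$), that
\[
\Omega_X \;=\; \sum_{\lambda}\; \plam \cdot \Bigl(\textstyle\sum_{\sh(U)=\lambda} x^U\Bigr) \;=\; \sum_{\lambda} \plam\, P_\lambda(x),
\]
where the outer sum is over strict partitions and $U$ ranges over semistandard shifted tableaux of shape $\lambda$ — here I use that fixing the insertion tableau $T=\pmix(w)$ of shape $\lambda$, the words $w$ with that insertion tableau and a given content are exactly counted by the recording tableaux, equivalently by $P_\lambda(x)$. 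On the other hand, $\Omega_X$ factors as a product over letters of the alphabet, $\Omega_X = \prod_{i} \Omega_{\{i\}}$, because a word is a concatenation of its letters and mixed insertion is compatible with concatenation (Theorem~\ref{monoidtheorem}); and each $\Omega_{\{i\}}$ is a sum of one-row classes, which I can match with the classical generating function for Schur $P$-functions in the variables. Comparing with the classical Cauchy-type identity $\sum_\lambda P_\lambda(y) Q_\lambda(x) = \prod \frac{1+x_iy_j}{1-x_iy_j}$ (restricted suitably), this forces the $\plam$ to obey the same algebra relations as the $P_\lambda$.

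\textbf{Step 2: commutativity and the LR coefficients.} From the factorization $\Omega_X(x,y) = \Omega_X(y,x)$ (concatenation-order of disjoint variable sets is immaterial after summing over all words), and the linear independence of the $P_\lambda$, one extracts pairwise commutativity of the $\plam$ and, expanding $\plam P_\lambda$ two ways, the identity $\sum_\mu \mathcal{P}_\mu \mathcal{P}_\nu \,[\text{coeff}] = \dots$; more cleanly, plugging a second set of variables into the one-row factors and using the classical expansion $P_\mu P_\nu = \sum_\lambda b^\lambda_{\mu,\nu} P_\lambda$ together with linear independence of the $P_\lambda(x)$ (over $\QQ\shpl$), we read off $\mathcal{P}_\mu\mathcal{P}_\nu = \sum_\lambda b^\lambda_{\mu,\nu}\plam$. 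Linear independence of the $\plam$ themselves (needed to conclude the map is an isomorphism, not merely a surjection) follows because distinct shapes $\lambda$ contribute shifted plactic classes of distinct shapes, hence disjoint supports in $\QQ\shpl$.

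\textbf{Main obstacle.} The delicate point is Step~1: establishing rigorously that $\Omega_X$ both equals $\sum_\lambda \plam P_\lambda(x)$ \emph{and} factors as a product over the alphabet in a way that reproduces the classical Schur $P$-function Cauchy identity — in particular that the one-variable factors $\Omega_{\{i\}}$, assembled via the shifted plactic (concatenation) product, generate precisely the relations of $\Omega$ and nothing more. Equivalently, one must rule out that the $\plam$ satisfy \emph{extra} relations collapsing the ring; this is exactly where one needs that mixed insertion is a genuine bijection respecting concatenation, so that the bookkeeping of words by $(\pmix,\qmix)$ is lossless. Once this dictionary is set up, transferring Sch\"utzenberger's argument is essentially formal, but verifying the generating-function identity — and checking it is compatible with the noncommutative multiplication rather than just the commutative image — is where the real work lies.
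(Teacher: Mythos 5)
Your route (a noncommutative Cauchy identity in the spirit of Fomin--Greene) is genuinely different from the paper's, but as written it has a real gap, and in fact the gap sits exactly where you place your ``main obstacle.'' First, the bookkeeping in Step 1 is wrong: in $\Omega_X=\sum_w [w]\,x^{\operatorname{content}(w)}$ each class $[T]$ with $\sh(T)=\lambda$ appears with multiplicity equal to the number of \emph{standard} shifted recording tableaux of shape $\lambda$ (a single integer $f^\lambda$), not with weight $P_\lambda(x)$; to get a genuine dual weight you must pass to biwords with a semistandard recording tableau (the Sagan--Worley dual picture of Section~\ref{wordsection}), and then the correct weight is $Q_\lambda(\xx)$, not $P_\lambda(\xx)$ --- the factor $2^{\ell(\lambda)}$ is not cosmetic in the shifted setting, since each nonempty hook word arises in exactly two ways from $\prod_j(1+xu_j)\prod_j(1-xu_j)^{-1}$. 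Second, and more seriously, the two facts you need --- that the one-row factors $\mathcal{B}(x)$ and $\mathcal{B}(y)$ commute modulo the shifted plactic relations, and that the product of one-row generating functions re-collects as $\sum_\lambda \plam Q_\lambda(\xx)$ --- are not consequences of ``summing over all words''; they are equivalent to the commutativity and Pieri-type multiplication you are trying to prove. The paper itself proves its noncommutative Cauchy identity (Corollary~\ref{nci}) \emph{from} Theorem~\ref{commutetheorem}, so invoking that identity here would be circular. Nothing in your write-up supplies an independent proof of either fact.

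For contrast, the paper closes this gap by proving a Pieri rule directly: it exhibits an explicit bijection, via SK insertion, between $\mathcal{D}(\mu)\times\mathcal{D}(k)$ and $\bigcup_{\lambda}\bigl(\mathcal{D}(\lambda)\times\mathcal{V}(\lambda/\mu)\bigr)$, the key combinatorial input being Lemma~\ref{uni} (inserting a hook word into an SSDT produces a recording skew tableau that is a vee, and vees of shape $\lambda/\mu$ number $2^{c(\lambda/\mu)-1}$). Since the one-row functions generate $\Omega$ and the plactic Schur $P$-functions obey the same Pieri recursion as the ordinary ones, the map $P_\lambda\mapsto\plam$ is forced to be a ring isomorphism, and \eqref{placticlreq} follows. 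If you want to salvage your generating-function approach, you would need to prove the operator identity $\mathcal{B}(x)\mathcal{B}(y)=\mathcal{B}(y)\mathcal{B}(x)$ directly from relations (\ref{sk1})--(\ref{sk8}) and then derive the Cauchy expansion from a biword insertion argument; that is a legitimate alternative, but it is precisely the work your proposal defers.
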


Sagan \cite{Sa} has extended the concept of jeu de taquin to shifted tableaux, and proved that, just as in the ordinary case, the result of applying a sequence of (shifted) jeu de taquin moves is independent from the order in which they are done. Throughout this paper we only apply shifted jeu de taquin to standard skew tableaux, for which the process is exactly as it is done in the ordinary case. For pairs of standard skew tableaux $T$ and $U$, we say that $T$ rectifies to $U$ if $U$ can be obtained from $T$ by a sequence of shifted jeu de taquin moves.

Our first application of Theorem \ref{commutetheorem} is a new proof (and a new version of) the shifted Littlewood-Richardson rule. Stembridge \cite{St} proved that the shifted Littlewood-Richardson number $b^{\lambda}_{\mu,\nu}$ is equal to the number of standard shifted Young skew tableaux of shape $\lambda / \mu$ which rectify to a fixed standard shifted Young tableau of shape~$\nu$.

By taking the coefficient of the shifted plactic class $[T]$ corresponding to a fixed tableau $T$ of shape $\lambda$ on both sides of (\ref{placticlreq}), one obtains the following:
\begin{corollary}[Shifted Littlewood-Richardson rule]\label{slrrule}
Fix a shifted plactic class $[T]$ of shape $\lambda$. The shifted Littlewood-Richardson coefficient $b^{\lambda}_{\mu, \nu}$ is equal to the number of pairs of shifted plactic classes $[U]$ and $[V]$ of shapes $\mu$ and $\nu$, respectively, such that $[U][V] = [T]$.
\end{corollary}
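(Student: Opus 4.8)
The plan is to derive Corollary~\ref{slrrule} as a direct consequence of Theorem~\ref{commutetheorem}, exactly by the argument sketched in the paragraph preceding the statement. The starting point is the shifted Littlewood-Richardson identity \eqref{placticlreq} in the shifted plactic algebra $\QQ\shpl$,
\[
\mathcal{P}_{\mu}\mathcal{P}_{\nu} = \sum_{\lambda} b^{\lambda}_{\mu,\nu}\,\plam,
\]
which Theorem~\ref{commutetheorem} provides. The right-hand side is an explicit $\QQ$-linear combination of the basis of $\QQ\shpl$ consisting of shifted plactic classes: by definition $\plam = \sum_{\sh(S)=\lambda}[S]$, so the coefficient of a fixed shifted plactic class $[T]$ on the right-hand side is $b^{\lambda}_{\mu,\nu}$, where $\lambda = \sh(T)$ (the other $\plam'$ with $\lambda'\neq\lambda$ contribute nothing to $[T]$ since distinct shifted plactic classes are linearly independent in the semigroup algebra, and shape is constant on a class).

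Next I would expand the left-hand side. Using $\mathcal{P}_{\mu} = \sum_{\sh(U)=\mu}[U]$ and $\mathcal{P}_{\nu} = \sum_{\sh(V)=\nu}[V]$ and the fact that multiplication in $\shpl$ is given by concatenation of representatives, $[U][V] = [UV]$ (well defined by Theorem~\ref{monoidtheorem}), we get
\[
\mathcal{P}_{\mu}\mathcal{P}_{\nu} = \sum_{\sh(U)=\mu}\ \sum_{\sh(V)=\nu} [U][V].
\]
Therefore the coefficient of $[T]$ on the left-hand side is precisely the number of ordered pairs $([U],[V])$ with $\sh(U)=\mu$, $\sh(V)=\nu$, and $[U][V]=[T]$. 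Equating the two coefficients of $[T]$ yields the claimed formula
\[
b^{\lambda}_{\mu,\nu} = \#\bigl\{\,([U],[V]) : \sh(U)=\mu,\ \sh(V)=\nu,\ [U][V]=[T]\,\bigr\},
\]
which is independent of the choice of $[T]$ among classes of shape $\lambda$ because the left-hand side is.

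There is essentially no obstacle here: the entire content has been packed into Theorem~\ref{commutetheorem} (and, behind it, Theorem~\ref{monoidtheorem} for well-definedness of the product and the linear independence of shifted plactic classes as a basis of $\QQ\shpl$). The only points that warrant an explicit word are (i) that the shifted plactic classes form a $\QQ$-basis of the semigroup algebra $\QQ\shpl$, so comparing coefficients is legitimate, and (ii) that each summand $\plam$ on the right contributes the coefficient $b^\lambda_{\mu,\nu}$ to \emph{every} class $[T]$ of shape $\lambda$ and $0$ to classes of other shapes, which is immediate from the definition of $\plam$. Thus the proof is a two-line coefficient extraction from \eqref{placticlreq}, and I would present it as such.
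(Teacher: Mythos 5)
Your proof is correct and follows exactly the paper's route: the paper obtains Corollary~\ref{slrrule} precisely by extracting the coefficient of a fixed shifted plactic class $[T]$ of shape $\lambda$ on both sides of the identity \eqref{placticlreq} from Theorem~\ref{commutetheorem}. Your added remarks on the shifted plactic classes forming a basis of $\QQ\shpl$ and on the well-definedness of the product $[U][V]=[UV]$ only make explicit what the paper leaves implicit.
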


\begin{example}
Let us compute $b^{21}_{2,1}$. For this, we fix the shifted tableau word~$w = 132$ associated with the shifted Young tableau~$T = \ycenter \begin{tiny} \young(12,:3) \end{tiny}$. The only way to write~$w = uv$ where~$u$ and~$v$ are reading words of shapes~$(2)$ and~$(1)$, respectively, is with~$u = 13$ (associated to the tableau~$U = \ycenter \begin{tiny} \young(13) \end{tiny}$) and~$v = 2$ (associated to the tableau~$V = \ycenter \begin{tiny} \young(2) \end{tiny}$). We conclude that $b^{21}_{2,1} = 1$.
\end{example}

Corollary \ref{slrrule} can be restated in the language of words as follows. In Section \ref{wordsection} we introduce a canonical representative of the shifted plactic class $[T]$ corresponding to the tableau $T$. This representative is called the \emph{mixed reading word} of~$T$, and denoted by~$\mread(T)$. (See Definition \ref{mreaddef} for the precise details.) A word $w$ is called a \emph{shifted tableau word} if $w = \mread(T)$ for some shifted Young tableau $T$. The \emph{shape} of a shifted tableau word is, by definition, the shape of the corresponding tableau. With this terminology, the shifted Littlewood-Richardson rule can be restated as follows:

\begin{corollary}
Fix a shifted tableau word $w$ of shape $\lambda$. The shifted Littlewood-Richardson coefficient $b^{\lambda}_{\mu, \nu}$ is equal to the number of pairs of shifted tableau words $u, v$ of shapes $\mu, \nu$, respectively, such that $w \seq uv$.
\end{corollary}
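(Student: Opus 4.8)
The plan is to deduce this from Corollary~\ref{slrrule} simply by translating the statement about shifted plactic classes into the language of words, using the fact that the mixed reading word provides a canonical representative of each shifted plactic class. First I would recall that by Theorem~\ref{monoidtheorem}, shifted plactic classes are in bijection with shifted Young tableaux, and in Section~\ref{wordsection} (which we are entitled to assume) each shifted Young tableau $T$ is assigned a distinguished representative word $\mread(T)$ of its class $[T]$; thus shifted tableau words of shape $\mu$ are precisely the canonical representatives of the shifted plactic classes of shape $\mu$, and the assignment $T \mapsto \mread(T)$ is a bijection between shifted Young tableaux of shape $\mu$ and shifted tableau words of shape $\mu$.

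Next, fix a shifted tableau word $w$ of shape $\lambda$, say $w = \mread(T)$ with $\sh(T) = \lambda$. By Corollary~\ref{slrrule}, $b^{\lambda}_{\mu,\nu}$ equals the number of pairs $([U],[V])$ of shifted plactic classes of shapes $\mu$, $\nu$ with $[U][V] = [T]$. Now the identity $[U][V] = [T]$ in $\shpl$ means, by definition of the product (concatenation of representatives, well defined by Theorem~\ref{monoidtheorem}), exactly that $uv \seq w$ for any representatives $u \in [U]$, $v \in [V]$; and $[U]$ has shape $\mu$ iff its canonical representative $\mread(U)$ is a shifted tableau word of shape $\mu$, similarly for $[V]$. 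So the map sending a pair of shifted tableau words $(u,v)$ of shapes $(\mu,\nu)$ with $w \seq uv$ to the pair of shifted plactic classes $([u],[v]) = ([\pmix(u)],[\pmix(v)])$ is well defined, and it is a bijection onto the set of pairs counted by Corollary~\ref{slrrule}: its inverse sends $([U],[V])$ to $(\mread(U),\mread(V))$. Injectivity holds because $u$ and $v$ are determined by their classes (being the canonical representatives), and surjectivity is immediate. Hence the two counts agree and $b^{\lambda}_{\mu,\nu}$ equals the number of such pairs of shifted tableau words.

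I do not expect any serious obstacle here: the content of the corollary is entirely in Theorem~\ref{commutetheorem} / Corollary~\ref{slrrule} and in the existence of the canonical representative $\mread(T)$, and what remains is the bookkeeping of unwinding definitions. The one point that requires a little care is making sure the shape condition transfers correctly under the bijection $T \mapsto \mread(T)$ — that is, that a word $u$ of shape $\mu$ (in the sense that $\pmix(u)$ has shape $\mu$) arises as $\mread(U)$ for a unique shifted Young tableau $U$ of shape $\mu$, which is exactly the defining property of shifted tableau words together with the fact that $\sh$ is constant on a shifted plactic class. Once this is noted, the restatement follows formally.
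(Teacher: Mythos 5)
Your proposal is correct and matches the paper's (implicit) argument: the paper presents this corollary as an immediate restatement of Corollary~\ref{slrrule} obtained by replacing each shifted plactic class with its canonical representative, the mixed reading word, which is exactly the dictionary you spell out. The only difference is that you make the bijection between pairs of classes and pairs of shifted tableau words explicit, which the paper leaves to the reader.
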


The representatives we have picked in Example \ref{placticschurexample} are precisely the mixed reading words of the corresponding tableaux, as one can see in Example \ref{mrwexample}.

\begin{lemma}\label{slrrulesarethesame}
Fix a shifted tableau word $w$ of shape $\lambda$ and a standard shifted tableau $Q$ of shape $\nu$. The number of pairs of shifted tableau words $u$, $v$ of shapes $\mu$ and $\nu$, respectively, such that $uv = w$ is equal to the number of standard shifted skew tableaux $R$ of shape $\lambda / \mu$ which rectify to $Q$.
\end{lemma}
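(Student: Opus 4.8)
The plan is to exhibit an explicit bijection between the two sets being counted, using mixed insertion (equivalently, the shifted Knuth relations of Theorem~\ref{monoidtheorem}) to translate the statement ``$uv = w$'' into the statement ``a skew tableau rectifies to $Q$.'' Fix $w = \mread(T)$ with $\sh(T) = \lambda$, and fix the standard shifted tableau $Q$ of shape $\nu$ that serves as the ``target'' of rectification; I may as well choose $Q$ so that $\mread$ applied to the appropriate tableau picks out a specific representative, but the content of the lemma is that the count is independent of this choice. The key point is that by Theorem~\ref{monoidtheorem} (and the definition of $\mread$ in Section~\ref{wordsection}), a factorization $w = uv$ where $u, v$ are \emph{shifted tableau words} of shapes $\mu, \nu$ is the same datum as a pair of shifted Young tableaux $(U, V)$ of shapes $\mu, \nu$ with $[U][V] = [T]$ in $\shpl$; this is just the restatement given in the corollary preceding the lemma, so I get to start from that reformulation.

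First I would recall the standardization/mixed-recording machinery: to the product $[U][V] = [T]$ one can attach the mixed recording tableau of the concatenation $uv$, which is a standard shifted tableau of shape $\lambda$ whose restriction to the first $|\mu|$ entries has shape $\mu$ (because $u$ is inserted first and $u$ has shape $\mu$), and whose ``upper'' portion — the cells labeled $|\mu|+1, \dots, |\lambda|$, read as a skew standard shifted tableau of shape $\lambda/\mu$ — records the insertion of $v$ into $\pmix(u) = U$. Next I would invoke the fact (this is the shifted analog of the classical statement that the recording tableau of a second word inserted into a fixed tableau, read as a skew tableau, rectifies to the recording tableau of that word alone) that this skew piece $R$ of shape $\lambda/\mu$ rectifies, under shifted jeu de taquin, precisely to $\qmix(v) = Q_{\text{std}}$, a standard shifted tableau of shape $\nu$. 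Since $v$ ranges over all shifted tableau words of shape $\nu$ and $\qmix$ restricted to such words is a bijection onto standard shifted tableaux of shape $\nu$ (the mixed recording tableau of a tableau word being forced — this is where the canonicity of $\mread$ enters), I can fix $Q$ once and for all to be this standard tableau, and the correspondence $v \leftrightarrow R$ becomes a bijection between $\{v : uv = w,\ \sh(v) = \nu\}$ and $\{R : \sh(R) = \lambda/\mu,\ R \jdt Q\}$, with $u$ determined by $u = \pmix^{-1}$ of the shape-$\mu$ restriction. Running this over the one remaining free parameter (the shape $\mu$, or equivalently the prefix $u$) gives the stated equality of counts.

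The main obstacle — and the step I would spend the most care on — is the claim that the skew mixed-recording tableau $R$ of the second-inserted word rectifies to $\qmix(v)$. In the classical (unshifted) setting this is a standard consequence of the fact that jeu de taquin commutes with Schensted insertion / of Schützenberger's theory, but in the shifted setting one must be careful: shifted jeu de taquin on semistandard tableaux is not as well-behaved (path-independence of rectification is only guaranteed for \emph{standard} skew tableaux, as the excerpt itself flags when introducing Sagan's shifted jeu de taquin). Fortunately $R$ here \emph{is} a standard skew tableau, so Sagan's path-independence applies; the real work is establishing the compatibility ``mixed insertion of $v$ into $U$, recorded skew-wise, $\xrightarrow{\ jdt\ } \qmix(v)$,'' which I would prove either by a direct induction on $|v|$ tracking a single insertion step against a single jeu de taquin slide, or by appealing to the duality with the Sagan--Worley correspondence mentioned in the Introduction (two words are shifted plactic equivalent iff the recording tableaux of their inverses agree), which converts the recording-tableau statement into a statement about insertion tableaux where Theorem~\ref{monoidtheorem} applies directly. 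I expect the cleanest route is the latter: use the Sagan--Worley duality to move the assertion onto the insertion side, where ``$R$ has the same rectification as $\qmix(v)$'' becomes ``$R$ and $\qmix(v)$ are shifted-Knuth equivalent as (standard) words,'' which is exactly the content of the monoid structure and hence already in hand.
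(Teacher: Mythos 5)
Your proposal follows essentially the same route as the paper: the paper realizes the factorization $uv=w$ as insertion of $v$ into the tableau of $u$, takes the skew recording tableau $R$ of shape $\lambda/\mu$, and proves that $R$ rectifies to $\qmix(v)$ (the special recording tableau of shape $\nu$) by repeated application of Lemma~\ref{oneevacuation}, which is exactly the one-step insertion-versus-slide induction you offer as your first option for the step you correctly identify as the crux. Your alternative route via Sagan--Worley duality is the only shaky point, since as stated it would need ``two standard skew shifted tableaux have the same rectification iff their reading words are shifted plactic equivalent,'' a compatibility between shifted jeu de taquin and the shifted plactic relations that the paper never establishes (and is careful not to claim); stick with the induction.
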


As a corollary, we recover the original result of Stembridge \cite{St}.

\begin{corollary}[\cite{St}]\label{lrsrule}
Fix a standard shifted tableau $Q$ of shape $\nu$. The shifted Littlewood-Richardson coefficient $b^{\lambda}_{\mu, \nu}$ is equal to the number of standard shifted skew tableaux of shape $\lambda / \mu$ which rectify to $Q$.
\end{corollary}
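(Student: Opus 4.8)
The plan is to derive Corollary~\ref{lrsrule} as an immediate consequence of Corollary~\ref{slrrule} (the word-theoretic shifted Littlewood-Richardson rule) together with Lemma~\ref{slrrulesarethesame}, which does the combinatorial bridging work. First I would fix a standard shifted tableau $Q$ of shape $\nu$, and also fix a shifted tableau word $w$ of shape $\lambda$; the corresponding shifted plactic class $[w] = [T_\lambda]$ serves as the fixed class in Corollary~\ref{slrrule}. By that corollary, $b^{\lambda}_{\mu,\nu}$ equals the number of pairs $(u,v)$ of shifted tableau words of shapes $\mu$ and $\nu$ respectively with $uv = w$ (here using that $w$ is itself a shifted tableau word, so $w \seq uv$ becomes $uv = w$ on the level of the canonical representative — this is the restatement given just before Lemma~\ref{slrrulesarethesame}).

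Next I would invoke Lemma~\ref{slrrulesarethesame} directly: it asserts that the number of such pairs $(u,v)$ equals the number of standard shifted skew tableaux $R$ of shape $\lambda/\mu$ that rectify to $Q$. Chaining the two equalities gives
\[
b^{\lambda}_{\mu,\nu} = \#\{\, R : \sh(R) = \lambda/\mu,\ R \text{ rectifies to } Q \,\},
\]
which is exactly the statement of Corollary~\ref{lrsrule}. The one point that needs a sentence of care is well-definedness: Lemma~\ref{slrrulesarethesame} is stated for a fixed $Q$ and a fixed $w$, but Corollary~\ref{slrrule} already guarantees that the count of pairs $(u,v)$ is independent of which tableau word $w$ of shape $\lambda$ we chose (since $b^{\lambda}_{\mu,\nu}$ is). Hence the right-hand count — standard shifted skew tableaux of shape $\lambda/\mu$ rectifying to $Q$ — is also independent of the auxiliary choice of $w$, and the corollary is consistent.

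There is essentially no obstacle at this stage: all the real work has been pushed into Corollary~\ref{slrrule} (which rests on Theorem~\ref{commutetheorem}) and into Lemma~\ref{slrrulesarethesame}, whose proof — the genuinely delicate part — would involve the explicit bijection between factorizations $w = uv$ into shifted tableau words and standard shifted skew tableaux rectifying to $Q$, presumably realized via the mixed recording tableau / SK recording tableau machinery and shifted jeu de taquin. For the present statement, the proof is just the two-line composition of those results, so I would write it as: ``This follows immediately by combining Corollary~\ref{slrrule} (in its word form) with Lemma~\ref{slrrulesarethesame}.'' If anything requires emphasis, it is only the remark that the independence of the rectification count from $Q$ itself (for fixed shapes) is what makes the classical formulation meaningful, and that independence is inherited from $b^{\lambda}_{\mu,\nu}$ being a well-defined structure constant.
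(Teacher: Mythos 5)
Your proposal matches the paper's own derivation exactly: Corollary~\ref{lrsrule} is obtained there by simply combining the word-form restatement of Corollary~\ref{slrrule} with Lemma~\ref{slrrulesarethesame}, with no further argument. The only point of friction --- reconciling the condition $w \seq uv$ in the restated rule with the condition $uv = w$ in the lemma --- is one the paper itself glosses over (its proof of the lemma in fact establishes the $\seq$ version via the insertion bijection), and you flag and handle it appropriately.
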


The second application is a new proof (and a new version of) the \emph{Schur expansion of a Schur $P$-function}. Stembridge \cite{St} has found a combinatorial interpretation for the coefficients $g^{\lambda}_{\mu}$ appearing in the sum
\begin{equation}\label{schurpschur}
P_{\lambda} = \sum_{\mu} g^{\lambda}_{\mu} s_{\mu}.
\end{equation}
Below we give a different description of the numbers $g^{\lambda}_{\mu}$ in terms of shifted plactic classes.

By Proposition \ref{refinementprop}, any two shifted plactic equivalent words are plactic equivalent; in other words, relations (\ref{sk1})--(\ref{sk8}) follow from (\ref{k1})--(\ref{k2}). This yields the natural projection
\[
\pi: \shpl \rightarrow \pl
\]
which maps the shifted plactic class~$[u]$ to the plactic class~$\langle u \rangle$.

We next consider the image of a plactic Schur $P$-function under $\pi$.

\begin{theorem}\label{pschurintermsofschur}
The plactic Schur $P$-function~$\mathcal{P}_{\lambda}$ is sent by $\pi$ to a sum of plactic Schur functions~$\mathcal{S}_{\mu}$. Specifically (cf. \ref{schurpschur}),
\[
\pi(\mathcal{P}_{\lambda}) = \sum_{\mu} g^{\lambda}_{\mu} \mathcal{S}_{\mu}.
\]
\end{theorem}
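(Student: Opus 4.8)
The plan is to transport the identity \eqref{schurpschur} through the commutative-algebra isomorphisms supplied by Theorem~\ref{commutetheorem} and its classical counterpart, using the projection $\pi$ as the intertwiner. First I would recall the classical fact (due to Sch\"utzenberger, as explained in the Introduction) that the map $s_\mu \mapsto \mathcal{S}_\mu$ extends to an algebra isomorphism from $\Lambda$ onto the subring of $\QQ\pl$ generated by the plactic Schur functions; call this isomorphism $\varphi$. Likewise, by Theorem~\ref{commutetheorem}, the map $P_\lambda \mapsto \mathcal{P}_\lambda$ extends to an algebra isomorphism $\psi$ from the subring $\Omega \subseteq \Lambda$ onto the subring of $\QQ\shpl$ generated by the $\mathcal{P}_\lambda$. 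The one new ingredient I need is that $\pi$ is compatible with these two isomorphisms on the relevant subring, i.e.\ that the square
\[
\begin{array}{ccc}
\Omega & \stackrel{\psi}{\longrightarrow} & \QQ\shpl \\
\downarrow & & \downarrow \pi \\
\Lambda & \stackrel{\varphi}{\longrightarrow} & \QQ\pl
\end{array}
\]
commutes, where the left vertical arrow is the inclusion $\Omega \hookrightarrow \Lambda$. Granting this, one computes
\[
\pi(\mathcal{P}_\lambda) = \pi(\psi(P_\lambda)) = \varphi(P_\lambda) = \varphi\Bigl(\sum_\mu g^\lambda_\mu\, s_\mu\Bigr) = \sum_\mu g^\lambda_\mu\, \mathcal{S}_\mu,
\]
which is exactly the claim.

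To establish the commutativity of the square it suffices, since all four maps are algebra homomorphisms and $\Omega$ is generated as an algebra by the $p_k = P_{(k)}$, to check the equality $\pi(\mathcal{P}_{(k)}) = \varphi(p_k)$ on these generators. Now $\mathcal{P}_{(k)}$ is the sum of all shifted plactic classes of the one-row shape $(k)$; by Remark~\ref{placticremark}, a word has mixed insertion tableau of one-row shape exactly when it is a hook word, so $\mathcal{P}_{(k)} = \sum [w]$ over hook words $w$ of length $k$, grouped into shifted plactic classes. Applying $\pi$ and using Proposition~\ref{refinementprop} (each shifted plactic class maps into a single plactic class), $\pi(\mathcal{P}_{(k)})$ is the sum, over plactic classes $\langle w\rangle$, of the number of shifted plactic classes inside $\langle w\rangle$ consisting of hook words, times $\langle w\rangle$. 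On the other side, $\varphi(p_k)$ is the image in $\QQ\pl$ of $p_k = P_{(k)} = \sum_\mu g^{(k)}_\mu s_\mu$, i.e.\ $\sum_\mu g^{(k)}_\mu \mathcal{S}_\mu$; so I must match the coefficients. The cleanest route is to compare the two expansions after further projecting $\QQ\pl$ to the commutative ring of symmetric functions (sending $\langle w\rangle \mapsto x^{\langle w\rangle}$, which is well defined since plactic equivalence preserves content): both $\pi(\mathcal{P}_{(k)})$ and $\varphi(p_k)$ then map to $p_k(x_1,x_2,\dots)$ — the first because the hook words of content $\alpha$ are precisely the words enumerated by the monomials of the one-row Schur $P$-function, the second by definition of $\varphi$ — and since $\varphi$ is injective and the $\mathcal{S}_\mu$ are a basis of its image, this determines $\pi(\mathcal{P}_{(k)})$ uniquely; comparing with $\varphi(p_k)$ gives $\pi(\mathcal{P}_{(k)}) = \varphi(p_k)$.

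Alternatively, and perhaps more in the spirit of the paper, one can avoid the auxiliary commutative projection: it is enough to know that $\pi(\mathcal{P}_\lambda)$ lies in the image of $\varphi$ (this is the assertion ``$\mathcal{P}_\lambda$ is sent to a sum of plactic Schur functions'' and follows because $\pi$ is a monoid homomorphism carrying the shifted plactic monoid onto its image and $\mathcal{P}_\mu\mathcal{P}_\nu = \sum b^\lambda_{\mu\nu}\mathcal{P}_\lambda$ forces the $\pi(\mathcal{P}_\lambda)$ to multiply with structure constants $b^\lambda_{\mu\nu}$, which are exactly the structure constants of the $P$-functions, hence of a subring of symmetric functions, hence expressible via $\varphi$), and then read off the coefficients $g^\lambda_\mu$ from the commutative image. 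The main obstacle I anticipate is the bookkeeping in showing that $\pi$ does send each $\mathcal{P}_\lambda$ into the span of the $\mathcal{S}_\mu$ with the correct coefficients rather than merely into $\QQ\pl$; this is where one genuinely uses that shifted plactic equivalence refines plactic equivalence (Proposition~\ref{refinementprop}) together with the algebra isomorphism of Theorem~\ref{commutetheorem}, and it is worth stating explicitly that the coefficient of $\mathcal{S}_\mu$ in $\pi(\mathcal{P}_\lambda)$ counts the shifted plactic classes of shape $\lambda$ contained in a fixed plactic class of shape $\mu$ — the reformulation flagged as Corollary~\ref{pschurintermsofschur} in the Introduction — which then matches $g^\lambda_\mu$ by the above comparison.
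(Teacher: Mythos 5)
There is a genuine gap at the crucial step. Your reduction to the one-row generators $p_k=P_{(k)}$ via the commutative square is a legitimate and even attractive organizational idea (granting that $\Omega$ is generated by the $P_{(k)}$ and that all four maps are algebra homomorphisms, checking $\pi(\mathcal{P}_{(k)})=\varphi(p_k)$ would indeed suffice). But your verification of that base case does not close. You argue that $\pi(\mathcal{P}_{(k)})$ and $\varphi(p_k)$ have the same image under the commutative projection $\langle w\rangle\mapsto x^{\langle w\rangle}$ and conclude they are equal; this only works if you already know that $\pi(\mathcal{P}_{(k)})$ lies in the image of $\varphi$, i.e.\ in the span of the plactic Schur functions, since the commutative projection $\QQ\pl\to\Lambda$ is very far from injective (it is only injective on that span). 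That containment is precisely the first, and hardest, assertion of the theorem. Your fallback argument for it --- that the $\pi(\mathcal{P}_\lambda)$ multiply with structure constants $b^\lambda_{\mu\nu}$ and hence generate ``a subring of symmetric functions, hence expressible via $\varphi$'' --- is not valid: it shows at best that the subalgebra they generate is a quotient of $\Omega$, not that this subalgebra is contained in $\varphi(\Lambda)$. Two subalgebras of $\QQ\pl$ can both be isomorphic to rings of symmetric functions without either containing the other.

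The missing ingredient, which is the actual content of the paper's proof, is a combinatorial statement: the number of shifted plactic classes of shape $\lambda$ contained in a fixed plactic class $\langle P_\mu\rangle$ depends only on $\mu$, not on the choice of $P_\mu$. The paper proves this by exhibiting a bijection $[T_\lambda]\mapsto \qr(\mread(T_\lambda))$ between these shifted plactic classes and the standard Young tableaux of shape $\mu$ that rectify to the special recording tableau $U_\lambda$; the key tool is Theorem~\ref{jdttheorem}, which says $\qr(w)$ rectifies to $\qmix(w)$ under shifted jeu de taquin. This simultaneously shows that $\pi(\mathcal{P}_\lambda)$ is a well-defined nonnegative integer combination of the $\mathcal{S}_\mu$ and identifies the coefficients (after the commutative comparison you describe, which is fine once containment is known) as $g^\lambda_\mu$, yielding Theorem~\ref{usstem} as a byproduct. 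Your proposal never invokes Theorem~\ref{jdttheorem} or any substitute for it, and neither Proposition~\ref{refinementprop} nor Theorem~\ref{commutetheorem} alone can supply this; so as written the argument does not go through even for $\lambda=(k)$.
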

Since the span of the $\mathcal{P}_{\lambda}$ and the span of the $\mathcal{S}_{\lambda}$ are isomorphic to $\Omega$ and $\Lambda$, respectively, the following statement holds.
\begin{corollary}
The coefficient $g^{\lambda}_{\mu}$ is equal to the number of shifted plactic classes~$[u]$ of shifted shape $\lambda$ such that~$\pi([u]) = \langle v \rangle $ for some fixed plactic class~$\langle v \rangle $ of shape $\mu$.
\end{corollary}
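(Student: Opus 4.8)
The plan is to derive this corollary directly from Theorem~\ref{pschurintermsofschur} by comparing coefficients of a fixed plactic class on both sides of the identity $\pi(\mathcal{P}_{\lambda}) = \sum_{\mu} g^{\lambda}_{\mu}\,\mathcal{S}_{\mu}$, using the basic fact that the plactic Schur functions $\mathcal{S}_{\mu}$ form a ``triangular'' system with respect to the plactic basis: the plactic class $\langle v\rangle$ appears in $\mathcal{S}_{\mu}$ with coefficient $1$ if $\sh(\langle v\rangle)=\mu$ and coefficient $0$ otherwise (since $\mathcal{S}_{\mu}$ is by definition the sum of all distinct plactic classes of shape $\mu$, each occurring exactly once).

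First I would fix a plactic class $\langle v\rangle$ of shape $\mu$ and take the coefficient of $\langle v\rangle$ in both sides of $\pi(\mathcal{P}_{\lambda})=\sum_{\rho} g^{\lambda}_{\rho}\,\mathcal{S}_{\rho}$. On the right-hand side, by the observation above, only the term $\rho=\mu$ contributes, and it contributes exactly $g^{\lambda}_{\mu}$; so the coefficient of $\langle v\rangle$ in $\pi(\mathcal{P}_{\lambda})$ equals $g^{\lambda}_{\mu}$. On the left-hand side, I would unwind the definitions: $\mathcal{P}_{\lambda}=\sum_{\sh(T)=\lambda}[T]$ is a sum of shifted plactic classes, each with coefficient $1$, and $\pi$ sends the shifted plactic class $[T]=[u]$ to the plactic class $\langle u\rangle$. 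Hence the coefficient of $\langle v\rangle$ in $\pi(\mathcal{P}_{\lambda})$ is precisely the number of shifted plactic classes $[u]$ of shifted shape $\lambda$ with $\pi([u])=\langle v\rangle$. Equating the two computations gives the claimed formula.

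The one point that needs a word of care — and I expect it to be the only genuine obstacle — is the well-definedness and finiteness of this count, i.e.\ that $\pi(\mathcal{P}_{\lambda})$ is actually a well-defined element of $\QQ\shpl$'s image in $\QQ\pl$ with finite coefficients, so that ``the coefficient of $\langle v\rangle$'' makes sense. This follows because $\pi$ is a monoid homomorphism (Proposition~\ref{refinementprop} and the paragraph defining $\pi$) extended linearly, and because distinct shifted plactic classes can map to the same plactic class but each plactic class of shape $\mu$ is the disjoint union of only finitely many shifted plactic classes (Proposition~\ref{refinementprop}); alternatively one simply restricts to a finite alphabet $X_n$, where every monoid in sight is finite in each shape and degree, and notes that the statement is independent of $n$ once $n\ge \ell(\mu)$. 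With that settled, the corollary is immediate from Theorem~\ref{pschurintermsofschur}.

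In short: expand $\pi(\mathcal{P}_{\lambda})$ using the definitions of $\mathcal{P}_{\lambda}$ and $\pi$, expand $\sum_{\mu} g^{\lambda}_{\mu}\mathcal{S}_{\mu}$ using the definition of $\mathcal{S}_{\mu}$, extract the coefficient of a fixed $\langle v\rangle$ with $\sh(\langle v\rangle)=\mu$ from each side, and conclude that $g^{\lambda}_{\mu}$ equals the number of shifted plactic classes $[u]$ of shape $\lambda$ with $\pi([u])=\langle v\rangle$.
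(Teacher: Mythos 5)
Your proposal is correct and follows essentially the same route as the paper: the corollary is read off from Theorem~\ref{pschurintermsofschur} by extracting the coefficient of a fixed plactic class $\langle v\rangle$ of shape $\mu$ on both sides of $\pi(\mathcal{P}_{\lambda})=\sum_{\mu}g^{\lambda}_{\mu}\mathcal{S}_{\mu}$, using that each $\mathcal{S}_{\mu}$ is a multiplicity-one sum of plactic classes of shape $\mu$ and that $\pi$ sends each shifted plactic class $[u]$ to $\langle u\rangle$. Your extra remark on finiteness of the count (via Proposition~\ref{refinementprop} or restriction to $X_n$) is a harmless and reasonable elaboration of a point the paper leaves implicit.
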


\begin{example}
Let $\mu$ be the ordinary shape $(3,1)$, and $\lambda$ be the shifted shape $(3,1)$. Let us compute $g^{\lambda}_{\mu}$, the coefficient of $s_{\mu}$ in $P_{\lambda}$. For this, we fix $\langle u \rangle = \langle 2134 \rangle$, the plactic class corresponding to the Young tableau $U = \ycenter \begin{tiny} \young(134,2) \end{tiny}$. Note that the words in $\langle u \rangle$ are $2134, 2314,$ and $2341$. These get split into two shifted plactic classes, namely $[2134]$ corresponding to the shifted Young tableau $\ycenter \begin{tiny} \young(1{\twop}34) \end{tiny}$, and $[2314] = [2341]$ corresponding to the shifted Young tableau $\ycenter \begin{tiny} \young(1{\twop}4,:3) \end{tiny}$. Since only one of these shifted plactic classes has shape $\lambda$, namely $[2314]$, we conclude that $g^{\lambda}_{\mu} = 1$.
\end{example}

\begin{theorem}\label{usstem}
Let $\lambda$ be a shifted shape and $U_{\lambda}$ a fixed standard shifted tableau of shape $\lambda$. Fix a plactic class $\langle T_{\mu} \rangle$ of (ordinary) shape $\mu$. Then the number of shifted plactic classes $[T_{\lambda}]$ of shape $\lambda$ for which $\pi([T_{\lambda}]) = \langle T_{\mu} \rangle$ is equal to the number of standard Young tableaux of shape $\mu$ which rectify to $U_{\lambda}$.
\end{theorem}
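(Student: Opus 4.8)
The plan is to exhibit an explicit bijection between the two sets. Write $A$ for the set of shifted plactic classes $[T_{\lambda}]$ of shape $\lambda$ with $\pi([T_{\lambda}]) = \langle T_{\mu}\rangle$, and $B$ for the set of standard Young tableaux of shape $\mu$ that rectify to $U_{\lambda}$. The first step is a reformulation of membership in $A$. Since shifted plactic equivalence refines plactic equivalence (Proposition~\ref{refinementprop}), each shifted plactic class is contained in a single plactic class; hence $\pi([T_{\lambda}]) = \langle T_{\mu}\rangle$ holds if and only if $[T_{\lambda}] \subseteq \langle T_{\mu}\rangle$ as sets of words, equivalently if and only if some word $w$ with $\pmix(w) = T_{\lambda}$ satisfies $\pr(w) = \pr(T_{\mu})$.

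Now for the bijection. The mixed insertion correspondence $w \leftrightarrow (\pmix(w), \qmix(w))$ is a bijection between words in $X$ and pairs consisting of a shifted Young tableau and a standard shifted Young tableau of the same shape; restricting to a fixed first coordinate $T_{\lambda}$, the words of $[T_{\lambda}]$ are in bijection via $w \mapsto \qmix(w)$ with the standard shifted Young tableaux of shape $\lambda$. So each $[T_{\lambda}] \in A$ contains a unique word $w_{0}$ with $\qmix(w_{0}) = U_{\lambda}$; since $\pr(w_{0}) = \pr(T_{\mu})$ has shape $\mu$, the RSK recording tableau $\qr(w_{0})$ is a standard Young tableau of shape $\mu$, and I set $\alpha([T_{\lambda}]) = \qr(w_{0})$. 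Injectivity of $\alpha$ is immediate, since $\rsk$ recovers $w_{0}$ from the pair $(\pr(T_{\mu}), \alpha([T_{\lambda}]))$ and then $[T_{\lambda}] = [\pmix(w_{0})]$. For the image, given any standard Young tableau $P$ of shape $\mu$ put $w = \rsk^{-1}(\pr(T_{\mu}), P)$ and $T_{\lambda} = \pmix(w)$; the reformulation above shows $[T_{\lambda}] \subseteq \langle T_{\mu}\rangle$ automatically, so $P$ lies in the image of $\alpha$ precisely when $\qmix(w) = U_{\lambda}$ (which forces $\sh(\pmix(w)) = \lambda$).

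Everything therefore reduces to a single statement: for every word $w$, the mixed recording tableau $\qmix(w)$ is the shifted jeu de taquin rectification of the RSK recording tableau $\qr(w)$, where $\qr(w)$ is viewed as a standard skew shifted tableau through the standard embedding of an ordinary shape as a shifted skew shape. This \emph{rectification lemma} is the main obstacle; note that it implies that $\qmix(w)$ depends only on $\qr(w)$, i.e. that coplactic (dual Knuth) moves leave the mixed recording tableau unchanged. I expect to prove it using the duality between mixed insertion and the Sagan--Worley correspondence invoked in Section~\ref{wordsection}, together with the order-independence of shifted jeu de taquin (Sagan~\cite{Sa}); alternatively it can be read off from the SK correspondence and Theorem~\ref{samerecordingtableau} by tracking how the recording cells are created. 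Granting it, a standard Young tableau $P$ of shape $\mu$ satisfies $\qmix(\rsk^{-1}(\pr(T_{\mu}), P)) = U_{\lambda}$ exactly when $P$ rectifies to $U_{\lambda}$, so $\alpha$ is a bijection from $A$ onto $B$, which is the assertion. (Combined with the corollary preceding the statement, which already identifies $|A|$ with $g^{\lambda}_{\mu}$, this recovers Stembridge's formula for $g^{\lambda}_{\mu}$~\cite{St}.)
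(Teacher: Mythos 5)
Your argument is correct and is essentially the paper's own proof: the paper likewise sends $[T_{\lambda}]$ to $\qr(w_0)$ where $w_0$ is the representative with $\qmix(w_0)=U_{\lambda}$ (namely $\mread(T_{\lambda})$ when $U_{\lambda}$ is the special recording tableau), inverts via $\rsk^{-1}$ applied to $(\pr(T_{\mu}),Q)$, and rests the whole matter on exactly the rectification statement you isolate, which is the paper's Theorem~\ref{jdttheorem}. Since that theorem is already available in the paper, the ``main obstacle'' you flag is not a gap but a citation.
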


As a corollary, we recover another result of Stembridge \cite{St}.

\begin{corollary}\label{stembridgerule}
The coefficient $g^{\lambda}_{\mu}$ is equal to the number of standard Young tableaux of shape~$\mu$ which rectify to a fixed standard shifted Young tableau of shape~$\lambda$.
\end{corollary}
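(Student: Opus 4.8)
The plan is to deduce Corollary \ref{stembridgerule} directly from Theorem \ref{usstem} together with the corollary stated immediately after Theorem \ref{pschurintermsofschur}. The point is that both of those results count the same objects in different ways, and chaining them produces exactly the desired identity.

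First I would unwind Theorem \ref{pschurintermsofschur}, which gives $\pi(\mathcal{P}_{\lambda}) = \sum_{\mu} g^{\lambda}_{\mu} \mathcal{S}_{\mu}$ in $\QQ \pl$. On the left, $\pi(\mathcal{P}_{\lambda})$ is the sum, over all shifted plactic classes $[T]$ of shape $\lambda$, of the plactic classes $\pi([T]) = \langle T\rangle$; on the right, each plactic Schur function $\mathcal{S}_{\mu}$ is the sum of all plactic classes of shape $\mu$, each with coefficient $1$. Comparing the coefficient of a fixed plactic class $\langle T_{\mu}\rangle$ of shape $\mu$ on both sides shows that $g^{\lambda}_{\mu}$ equals the number of shifted plactic classes $[T_{\lambda}]$ of shape $\lambda$ with $\pi([T_{\lambda}]) = \langle T_{\mu}\rangle$. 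This is precisely the corollary following Theorem \ref{pschurintermsofschur}, so I would simply cite it.

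Next I would invoke Theorem \ref{usstem}: fixing a standard shifted tableau $U_{\lambda}$ of shape $\lambda$, it identifies this same quantity with the number of standard Young tableaux of shape $\mu$ that rectify to $U_{\lambda}$. Composing the two equalities yields $g^{\lambda}_{\mu}$ equal to the number of standard Young tableaux of shape $\mu$ rectifying to $U_{\lambda}$, which is the assertion of Corollary \ref{stembridgerule}. The independence of this count on the choice of $U_{\lambda}$ is automatic, since $g^{\lambda}_{\mu}$ is a fixed structure constant that involves no such choice.

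I do not expect a genuine obstacle here: all the substantive work lies in Theorems \ref{pschurintermsofschur} and \ref{usstem}, which may be assumed. The only thing to be careful about is the bookkeeping, namely verifying that ``coefficient of $\langle T_{\mu}\rangle$ in $\pi(\mathcal{P}_{\lambda})$'' is literally the count appearing on the left-hand side of Theorem \ref{usstem}; once the definitions of $\pi$, of the plactic Schur $P$-function $\mathcal{P}_{\lambda}$, and of the plactic Schur function $\mathcal{S}_{\mu}$ are spelled out, this is immediate.
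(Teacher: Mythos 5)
Your proposal is correct and matches the paper's (implicit) derivation exactly: the corollary is obtained by combining the coefficient extraction from Theorem \ref{pschurintermsofschur} (i.e.\ the corollary immediately following it) with the bijective count of Theorem \ref{usstem}. No further argument is needed beyond the bookkeeping you describe.
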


\subsection{Noncommutative Schur $P$-functions and box-adding operators}

Fomin and Greene \cite{FG} have developed a theory of noncommutative Schur functions, and used it to obtain generalizations of the Littlewood-Richardson rule. A similar approach can be applied to the shifted case.

\begin{definition}[Noncommutative Schur $P$-function]\label{noncommdef}
Let $u_1, u_2, \ldots$ be a finite or infinite sequence of elements of some associative algebra $\mathcal{A}$ over $\QQ$. (We will always assume that these elements satisfy the shifted plactic relations.) For a shifted shape~$\lambda$, define
$$P_{\lambda}(\uu) = P_{\lambda}(u_1, u_2, \ldots) = \sum_T u^T,$$
where $T$ runs over all shifted Young tableaux, and the monomial $u^T$ is determined by $\mread(T)$, or by any other representative of the shifted plactic class corresponding to $T$. (In the infinite case, $P_{\lambda}(\mathbf{u})$ is an element of the appropriate \emph{completion} of the algebra $\mathcal{A}$.)
\end{definition}

Theorem \ref{commutetheorem} implies the following result.

\begin{corollary}\label{pcommute}
Assume that the elements $u_1, u_2, \ldots$ of some associative algebra satisfy the shifted plactic relations (\ref{sk1})--(\ref{sk8}) (the element $u_i$ represents the letter $i$ in the alphabet $X$).
Then the $P_{\lambda}(\uu)$ introduced in Definition \ref{noncommdef} commute pairwise, and satisfy the shifted Littlewood-Richardson rule:
$$P_{\mu}(\uu) P_{\nu}(\uu) = \sum_{\lambda} b^{\lambda}_{\mu, \nu} P_{\lambda}(\uu).$$
\end{corollary}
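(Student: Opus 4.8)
The plan is to deduce Corollary~\ref{pcommute} from Theorem~\ref{commutetheorem} by a standard ``universal property'' argument: the element $P_{\lambda}(\uu)$ is obtained from the plactic Schur $P$-function $\plam \in \QQ\shpl$ by applying the unique $\QQ$-algebra homomorphism $\varphi \colon \QQ\shpl \to \mathcal{A}$ that sends the generator $i \in X$ to $u_i$. The key point is that $\varphi$ is well defined \emph{precisely because} the $u_i$ satisfy the shifted plactic relations~(\ref{sk1})--(\ref{sk8}): these relations are a presentation of $\shpl$ (by the Definition of the shifted plactic monoid following Theorem~\ref{monoidtheorem}), so any assignment of the generators to elements of $\mathcal{A}$ respecting~(\ref{sk1})--(\ref{sk8}) extends uniquely to a monoid homomorphism $\shpl \to \mathcal{A}$, hence linearly to an algebra homomorphism $\QQ\shpl \to \mathcal{A}$ (passing to the completion in the infinite case).

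First I would check that $\varphi(\plam) = P_{\lambda}(\uu)$. By definition $\plam = \sum_{\sh(T)=\lambda}[T]$, and $\varphi([T]) = \varphi([\mread(T)])$, which is the product of the letters of any representative word of the shifted plactic class $[T]$ evaluated in $\mathcal{A}$ — this is exactly the monomial $u^T$ of Definition~\ref{noncommdef}, and it is independent of the chosen representative since $\varphi$ is well defined on $\shpl$. Summing over all $T$ of shape $\lambda$ gives $\varphi(\plam) = \sum_T u^T = P_{\lambda}(\uu)$, as claimed.

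Then the two assertions follow immediately by applying $\varphi$ to the identities of Theorem~\ref{commutetheorem}. Since the $\plam$ commute pairwise in $\QQ\shpl$, their images $P_{\lambda}(\uu) = \varphi(\plam)$ commute pairwise in $\mathcal{A}$. Likewise, applying $\varphi$ to the shifted Littlewood-Richardson identity~(\ref{placticlreq}) in $\QQ\shpl$,
\[
\varphi(\mathcal{P}_{\mu}\mathcal{P}_{\nu}) = \sum_{\lambda} b^{\lambda}_{\mu,\nu}\,\varphi(\plam),
\]
and using that $\varphi$ is an algebra homomorphism on the left-hand side together with $\varphi(\plam) = P_{\lambda}(\uu)$ throughout, yields $P_{\mu}(\uu)P_{\nu}(\uu) = \sum_{\lambda} b^{\lambda}_{\mu,\nu} P_{\lambda}(\uu)$.

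The only genuine subtlety — the ``main obstacle,'' though it is a mild one — is the bookkeeping in the infinite case: when the sequence $u_1, u_2, \ldots$ is infinite, $P_{\lambda}(\uu)$ lies in a completion $\widehat{\mathcal{A}}$ of $\mathcal{A}$, so one must check that $\varphi$ extends continuously to $\widehat{\QQ\shpl} \to \widehat{\mathcal{A}}$ and that the (a priori infinite) sums $\sum_T u^T$ converge there, with the equalities holding coefficientwise in each fixed multidegree. This is handled exactly as in Fomin--Greene~\cite{FG}: the monomials $u^T$ of a fixed content are finite in number, so all the identities above are finite in each graded piece and the passage to the completion is formal. Everything else is routine.
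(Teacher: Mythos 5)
Your proof is correct and follows essentially the same route as the paper: the paper simply notes that Theorem~\ref{commutetheorem} implies the corollary, the implicit mechanism being exactly your specialization homomorphism $\QQ\shpl \to \mathcal{A}$, which is well defined because relations~(\ref{sk1})--(\ref{sk8}) present $\shpl$. Your explicit verification that $\varphi(\plam) = P_{\lambda}(\uu)$ and your remark on the graded/completion bookkeeping are both sound and fill in exactly what the paper leaves tacit.
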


\begin{corollary}[Noncommutative Cauchy identity]\label{nci}
Let $u_1, u_2, \ldots, u_n$ be as in Corollary \ref{pcommute}, and let $x_1, x_2, \ldots, x_m$ be a family of commuting indeterminates, also commuting with each of the $u_j$. Then
\begin{equation}\label{shiftedcauchy}
\sum_{\lambda} P_{\lambda}(\uu) Q_{\lambda}(\xx) = \prod_{i = 1}^m \left( \prod_{j = n}^1 (1 + x_i u_j) \prod_{j = 1}^{n} (1 - x_i u_j)^{-1} \right).
\end{equation}
The analogous statement also holds when the $x_i$, or $u_j$, or both, are an infinite family.
\end{corollary}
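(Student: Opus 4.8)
The plan is to deduce the noncommutative Cauchy identity (\ref{shiftedcauchy}) from Corollary \ref{pcommute} by specializing the classical (commutative) Cauchy identity for Schur $P$- and $Q$-functions to the plactic setting. Recall that in the ring $\Omega$ one has the identity
\[
\sum_{\lambda} P_{\lambda}(\yy) Q_{\lambda}(\xx) = \prod_{i,j} \frac{1 + x_i y_j}{1 - x_i y_j},
\]
where the sum is over all strict partitions. The strategy is to interpret the $u_j$ as playing the role of the $y_j$, using the fact (Theorem \ref{commutetheorem}/Corollary \ref{pcommute}) that $P_{\lambda}(\uu)$ obeys exactly the multiplication rule of the $P_\lambda$, so that any polynomial identity among Schur $P$-functions that can be proven purely from the structure constants $b^{\lambda}_{\mu,\nu}$ transfers verbatim.

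First I would make precise the sense in which $\prod_{j=n}^{1}(1 + x_i u_j)\prod_{j=1}^{n}(1 - x_i u_j)^{-1}$ is the ``plactic avatar'' of $\prod_j \frac{1+x_i y_j}{1-x_i y_j}$. Expanding $(1-x_i u_j)^{-1}$ as a geometric series and multiplying out, the coefficient of a fixed power $x_i^k$ is a sum of noncommutative monomials in the $u_j$; the key combinatorial claim is that, grouping monomials by their shifted plactic class, this coefficient equals $\sum_{\lambda : |\lambda| = \text{(something)}} q_k\text{-type contributions}$ — more precisely, that $\prod_{j=n}^{1}(1+x_i u_j)\prod_{j=1}^n (1-x_iu_j)^{-1} = \sum_{k\ge 0} x_i^k \, Q_{(k)}(\uu)$ where $Q_{(k)}(\uu) = 2\, P_{(k)}(\uu) = 2\sum_{\sh(T)=(k)} u^T$ and $T$ ranges over single-row shifted tableaux, i.e. over hook words of length $k$ (Remark \ref{placticremark}). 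This is the one-row case of the identity and is essentially a direct expansion: a hook word $w_1 > \cdots > w_p \le w_{p+1} \le \cdots \le w_k$ in letters from $\{1 < \cdots < n\}$ corresponds exactly to choosing a strictly decreasing factor (picked up by the product $\prod_{j=n}^1(1+x_i u_j)$ of ``at most once'' factors, read in decreasing order) followed by a weakly increasing factor (picked up by $\prod_{j=1}^n (1-x_iu_j)^{-1}$, read in increasing order), and the two representatives of a row tableau with a primed diagonal entry account for the factor $2 = 2^{\ell((k))}$. One must check the ordering of the factors is the one that makes concatenation of the decreasing part and the increasing part a hook word, and that distinct $(i,x_i)$-free monomials in distinct plactic classes are not conflated — this follows from Remark \ref{placticremark}'s characterization of hook words as exactly the length-$k$ words with single-row mixed insertion tableau.

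Next I would assemble the multi-variable identity. Since the $x_i$ commute with each other and with all $u_j$, the right-hand side of (\ref{shiftedcauchy}) factors as $\prod_{i=1}^m \big(\sum_{k\ge 0} x_i^k Q_{(k)}(\uu)\big) = \prod_{i=1}^m \big(\sum_{k} x_i^k\, Q_{(k)}(\uu)\big)$, and by Corollary \ref{pcommute} the $Q_{(k)}(\uu)$ (being scalar multiples of the $P_{(k)}(\uu)$) commute among themselves and generate a commutative subalgebra isomorphic to the relevant subring of $\Omega$. In that subring the classical Cauchy identity for $Q$-functions reads $\prod_{i=1}^m \sum_k x_i^k q_k = \sum_\lambda P_\lambda(\xx)\, Q_\lambda$ where on the left $q_k = Q_{(k)}$; transferring this identity along the isomorphism $P_\lambda \mapsto P_\lambda(\uu)$ (equivalently $q_k \mapsto Q_{(k)}(\uu)$) gives $\prod_{i=1}^m \sum_k x_i^k Q_{(k)}(\uu) = \sum_\lambda Q_\lambda(\uu) P_\lambda(\xx)$, since the coefficients in this expansion depend only on the structure constants $b^\lambda_{\mu,\nu}$, which are the same on both sides by Corollary \ref{pcommute}. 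Rewriting $Q_\lambda(\uu) P_\lambda(\xx) = 2^{\ell(\lambda)}P_\lambda(\uu)\cdot 2^{-\ell(\lambda)}Q_\lambda(\xx) = P_\lambda(\uu) Q_\lambda(\xx)$ yields exactly (\ref{shiftedcauchy}).

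The main obstacle I anticipate is the single-row expansion step: verifying carefully that $\prod_{j=n}^{1}(1+x_iu_j)\prod_{j=1}^{n}(1-x_iu_j)^{-1}$ collects, after using the shifted plactic relations, into precisely $\sum_k x_i^k Q_{(k)}(\uu)$ with the correct factor of two and the correct identification of hook-word representatives — one has to be attentive to the order of the noncommuting factors and to the fact that, within a single plactic class of a row, all noncommutative monomials appearing in the product are accounted for with multiplicity one after reduction (so there is no over- or under-counting). The passage from $m$ finite to $m$ infinite (and the analogous extension in the $u_j$) is then a routine completion argument: both sides are well-defined in the appropriate inverse limit since each homogeneous component involves only finitely many variables, and the finite identities are compatible under the projections.
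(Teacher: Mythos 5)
Your proposal follows essentially the same route as the paper: expand each inner factor $\prod_{j=n}^1(1+x_iu_j)\prod_{j=1}^n(1-x_iu_j)^{-1}$ as $\sum_k x_i^k\,q_k(\uu)$ with $q_k(\uu)=Q_{(k)}(\uu)$, then invoke the commutativity from Corollary \ref{pcommute} to transfer the classical shifted Cauchy identity. One local correction: the factor of $2$ in $Q_{(k)}(\uu)=2P_{(k)}(\uu)$ does not come from ``two representatives of a row tableau with a primed diagonal entry'' --- a single-row shifted $P$-tableau has no primed diagonal entry, and its shifted plactic class contains exactly one word (there is a unique standard recording tableau of shape $(k)$); rather, each hook word $w_1>\cdots>w_p\le w_{p+1}\le\cdots\le w_k$ is produced exactly twice by the double product, since the elbow letter $w_p$ may be taken either from the strictly decreasing factor or from the weakly increasing one. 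With that accounting the step you flagged as the main obstacle goes through, and the rest of your argument matches the paper's proof.
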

\begin{proof}
We have
\begin{eqnarray*}
\prod_{i=1}^m \left( \prod_{j = \infty}^1 (1 + x_i u_j) \prod_{j = 1}^{\infty} (1 - x_i u_j)^{-1} \right)     & = & \prod_{i=1}^m \left( \sum_{k \ge 0} x_i^k \sum_{a_1 > \ldots > a_i \le a_{i+1} \le a_n} u_{a_1} \ldots u_{a_k} \right) \\
    & = & \prod_{i =1}^m \sum_{k \ge 0} x_i^k q_k(\uu).
\end{eqnarray*}

The last step follows from the classical shifted Cauchy identity (see e.g. \cite[Corollary 8.3]{Sa}) together with Corollary \ref{pcommute}, since now the $x_i$ and the $q_k(\mathbf{u})$ form a commuting family of indeterminates. Note that for this reason, the ordering in the interior products in \ref{shiftedcauchy} is as indicated, whereas the ordering in the outer products does not matter.
%\[
%\prod_{i =1}^m \sum_{k \ge 0} x_i^k p_k(\uu) = \sum_{\lambda} P_{\lambda}(\uu) Q_{\lambda}(\xx).
%\]
\end{proof}

\begin{definition}[Partial maps, cf.\ \cite{FG}]
Let $\mathbf{Y}$ be a finite or countable set, and let $\RR \mathbf{Y}$
be the vector space formally spanned over~$\RR$ by the elements
of~$\mathbf{Y}$. 
A linear map $u\in\operatorname{End}(\RR \mathbf{Y})$ is called 
a \emph{partial map} in~$\mathbf{Y}$ if the image $u(p)$ of each
element $p \in \mathbf{Y}$ is
either another element of $\mathbf{Y}$ or zero. 
\end{definition}

\begin{definition}[Generalized skew Schur $Q$-functions, cf. \cite{FG}]
Let $u_1, u_2, \ldots, u_n$ be partial maps in~$\mathbf{Y}$.
For any $g, h \in \mathbf{Y}$, define
\begin{equation}\label{genskew}
G_{h/g}(x_1, \ldots, x_m) = \left\langle \prod_{i=1}^{m} \left(
\prod_{j=n}^1 (1+x_i u_j) \prod_{j = 1}^{n} (1-x_i u_j)^{-1} \right)
g  ,  h  \right\rangle,
\end{equation}
where the variables $x_i$ commute with each other and with the~$u_j$, and the
noncommuting factors of the double product are multiplied in the
specified order. Here, $\langle* , *\rangle$ denotes the inner product on $\RR
\mathbf{Y}$ for which the elements of $\mathbf{Y}$ form an orthonormal
basis. By the argument at the end of the proof of Corollary \ref{nci}, the order of the factors in the outer product doesn't matter, which implies that the $G_{h/g}$ are (ordinary) symmetric polynomials in $x_1, \ldots, x_m$.
\end{definition}

\begin{theorem}[\emph{Generalized shifted Littlewood-Richardson Rule}]\label{glr}
Let the $u_i$ be partial maps in~$\mathbf{Y}$ satisfying the shifted plactic relations (\ref{sk1})--(\ref{sk8}). Then for any $g, h \in \mathbf{Y}$, the polynomial $G_{h/g}$ defined by (\ref{genskew}) is a nonnegative integer combination of Schur $Q$-functions. More specifically, 
\[
G_{h/g}(x_1, \ldots, x_m)
=\sum b_{g, \lambda}^h \, Q_{\lambda}(x_1, \ldots, x_m) ,
\]
where $b_{g, \lambda}^h$
is equal to the number of shifted semistandard Young tableaux~$T$ of
shape~$\lambda$ such that $u^T g = h$. 
\end{theorem}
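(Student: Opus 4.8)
The plan is to follow the Fomin--Greene template, adapting it to the shifted setting via the Cauchy identity of Corollary~\ref{nci} and the commutativity/LR statement of Corollary~\ref{pcommute}. First I would expand the generating function on the right-hand side of~(\ref{genskew}) using Corollary~\ref{nci}: since the $u_i$ satisfy the shifted plactic relations, we may substitute
\[
\prod_{i=1}^{m}\left(\prod_{j=n}^{1}(1+x_iu_j)\prod_{j=1}^{n}(1-x_iu_j)^{-1}\right)
=\sum_{\lambda}P_{\lambda}(\uu)\,Q_{\lambda}(x_1,\ldots,x_m),
\]
where the sum is over strict partitions $\lambda$. Taking the inner product with $g$ on the right and $h$ on the left then gives
\[
G_{h/g}(x_1,\ldots,x_m)=\sum_{\lambda}\langle P_{\lambda}(\uu)\,g,\,h\rangle\,Q_{\lambda}(x_1,\ldots,x_m),
\]
so the whole theorem reduces to identifying the coefficient $\langle P_{\lambda}(\uu)g,h\rangle$ with the stated count.

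Second, I would unwind the definition of $P_{\lambda}(\uu)$ from Definition~\ref{noncommdef}: it is $\sum_{T}u^{T}$, where $T$ ranges over shifted Young tableaux of shape $\lambda$ and $u^{T}$ is the operator associated to any representative of the shifted plactic class $[T]$ — the point being that $u^{T}$ is well defined precisely because the $u_i$ satisfy~(\ref{sk1})--(\ref{sk8}), so different representatives of $[T]$ give the same endomorphism of $\RR\mathbf{Y}$. Hence
\[
\langle P_{\lambda}(\uu)g,h\rangle=\sum_{\sh(T)=\lambda}\langle u^{T}g,h\rangle.
\]
Since each $u_j$ is a partial map, $u^{T}$ is a partial map too (a composition of partial maps is partial), so $u^{T}g$ is either an element of $\mathbf{Y}$ or zero, and therefore $\langle u^{T}g,h\rangle\in\{0,1\}$, equal to $1$ exactly when $u^{T}g=h$. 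Summing over $T$ yields $\langle P_{\lambda}(\uu)g,h\rangle=\#\{T:\sh(T)=\lambda,\ u^{T}g=h\}=b^{h}_{g,\lambda}$, which is a nonnegative integer. Combining with the previous display gives the asserted expansion $G_{h/g}=\sum_{\lambda}b^{h}_{g,\lambda}Q_{\lambda}$. The fact that $G_{h/g}$ is symmetric is already recorded in the definition (it follows from the order-independence of the outer product, as in the proof of Corollary~\ref{nci}), and the $Q_{\lambda}$ being symmetric, the identity is an identity of symmetric polynomials.

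The main obstacle I anticipate is not any single deep step but the bookkeeping needed to make the substitution of Corollary~\ref{nci} legitimate inside the inner product~(\ref{genskew}): one must check that the completion in which $P_{\lambda}(\uu)$ lives is compatible with applying the operator to a fixed $g$ and pairing with a fixed $h$, i.e.\ that for fixed $g,h$ only finitely many $\lambda$ with $|\lambda|$ of a given size contribute and that the infinite sum converges in the relevant sense (coefficientwise in the $x_i$). This is routine once one observes that $u^{T}$ is degree-homogeneous: it shifts the grading of $\RR\mathbf{Y}$ — or more simply, that the coefficient of a fixed monomial $x^{\alpha}$ on both sides involves only finitely many tableaux $T$. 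A secondary point requiring care is confirming that $u^{T}$ really is independent of the chosen representative word; this is exactly the content of Theorem~\ref{monoidtheorem} together with the standing hypothesis that the $u_i$ satisfy~(\ref{sk1})--(\ref{sk8}), so it costs nothing beyond citing that result.
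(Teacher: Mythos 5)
Your proposal is correct and follows essentially the same route as the paper: expand the operator product via the noncommutative Cauchy identity (Corollary~\ref{nci}), pull the $Q_{\lambda}(\xx)$ out of the inner product to identify the coefficient as $\langle P_{\lambda}(\uu)g,h\rangle$, and count tableaux $T$ with $u^{T}g=h$ using the fact that $u^{T}$ is a partial map. The extra remarks on representative-independence and convergence are sensible elaborations of points the paper leaves implicit, but they do not change the argument.
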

\begin{proof}
By the noncommutative Cauchy identity (Corollary \ref{nci}),
\[
G_{h / g}(\xx) = \left\langle \sum_{\lambda} P_{\lambda}(\uu) Q_{\lambda}(\xx) g , h \right\rangle = \sum_{\lambda} \langle P_{\lambda}(\uu) g , h \rangle Q_{\lambda}(\xx).
\]
Consequently,
\[
b_{g \nu}^{h} = \langle P_{\lambda}(\uu) g , h \rangle,
\]
which is precisely the number of shifted semistandard Young tableaux~$T$ of
shape~$\lambda$ such that $u^T g = h$. 
\end{proof}

As an application of this theory, we obtain another version of the shifted Littlewood-Richardson Rule.

\begin{definition}[cf. \cite{FG}]
The \emph{diagonal box-adding operators} $u_j$ act on shifted shapes according to the following rule:
\[
u_j (\lambda)=     
\begin{cases}
\lambda \cup  \{ \text{box on the $j$-th diagonal}\} & \text{if this gives a valid shape}; \\
0 &  \text{otherwise.}
\end{cases}
\]
Here the diagonals are numbered from left to right, starting with $j=1$ for the main diagonal.
\end{definition}

\begin{example}
We have
\[
\ycenter u_2 \left( \young(~~~,:~) \right) = \young(~~~,:~~) \qquad \text{and} \qquad u_1 \left( \young(~~~,:~) \right) = 0.
\]
\end{example}

The maps $u_i$ are partial maps on the vector space formally spanned by the shifted shapes, as the image of each $u_i$ is either a shifted shape or $0$.

The product
\[
\mathcal{B}(x) = \prod_{j = n}^1(1+xu_j) \prod_{j = 1}^n(1-xu_j)^{-1}
\]
can be viewed as an operator that adds a (possibly disconnected) border strip to a fixed shifted shape (the first product will add a horizontal strip, and the second one a vertical strip), each time introducing a power of $x$ that is determined by the length of the strip. Setting $g = \mu$ and $h = \lambda$, we get
\[
\prod_{i \ge 0} \left( \prod_{j = n}^1(1+x_iu_j) \prod_{j = 1}^n(1-x_iu_j)^{-1}\right) \mu = \prod_{i \ge 0} \mathcal{B}(x_i) \mu  = \sum_{T} x^T \lambda,
\]
where the sum is over all semistandard skew shifted Young tableaux $T$ of shape $\lambda / \mu$. Therefore
\[
G_{\lambda / \mu} (\xx) = \left\langle \prod_{i \ge 1} \mathcal{B}(x_i) \mu, \lambda \right\rangle = \sum_{T} x^T = Q_{\lambda / \mu}(\xx),
\]
the skew Schur $Q$-function.

One can see that the box-adding operators $u_i$ satisfy the \emph{nil-Temperley-Lieb relations of type B} (cf. \cite[Example 2.4]{FG} \cite{FK}):
\begin{eqnarray*}
u_i u_j = u_j u_i \qquad & & |i-j| \ge 2,\\
u_i^2 = 0 \qquad & & i \ge 1,\\
u_i u_{i+1} u_i = 0 \qquad &  & i \ge 2,\\
u_{i+1} u_i u_{i+1} = 0 \qquad &  & i \ge 1,
\end{eqnarray*}
which implies that they also satisfy (\ref{sk1})--(\ref{sk8}). Consequently, we can use Theorem~\ref{glr} to obtain an expansion of the skew Schur $Q$-functions in terms of Schur $Q$-functions. This leads to the following new version of the shifted Littlewood-Richardson rule.

\begin{corollary}\label{lrcoef}
The shifted Littlewood-Richardson number $b_{\mu, \nu}^{\lambda}$ is equal to the number of shifted semistandard tableaux~$T$ of shape~$\nu$ such that $u^T(\mu) = \lambda$, where, as before, $u^T$ is the noncommutative monomial in $u_1, u_2, \ldots$ defined by any representative of the shifted plactic class associated with $T$, and each $u_i$ is interpreted as a diagonal box-adding operator.
\end{corollary}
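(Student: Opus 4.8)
The plan is to recognize this statement as a direct instance of Theorem~\ref{glr}, applied to the diagonal box-adding operators, once one identifies the generalized skew Schur $Q$-function with the genuine one. Two ingredients are already assembled in the discussion preceding the statement. First, the operators $u_1, u_2, \ldots$ satisfy the type B nil-Temperley-Lieb relations, and hence the shifted plactic relations (\ref{sk1})--(\ref{sk8}); in particular the noncommutative monomial $u^T$ attached to a shifted Young tableau $T$ is well defined, independent of the chosen representative of the shifted plactic class of $T$. Second, taking $g=\mu$ and $h=\lambda$ in (\ref{genskew}) yields $G_{\lambda/\mu}(\xx) = \left\langle \prod_{i\ge 1}\mathcal{B}(x_i)\,\mu,\ \lambda\right\rangle$, and tracking how the border-strip operator $\mathcal{B}(x_i)$ inserts the letters $i'$ and $i$ along successive diagonals shows this equals $\sum_T x^T$ over all semistandard skew shifted Young tableaux $T$ of shape $\lambda/\mu$, i.e.\ $G_{\lambda/\mu}(\xx) = Q_{\lambda/\mu}(\xx)$.

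With these in hand, I would invoke Theorem~\ref{glr} for the $u_i$ equal to the diagonal box-adding operators and with $g=\mu$, $h=\lambda$. Relabelling the summation index of that theorem (there called $\lambda$) as $\nu$, it gives
\[
Q_{\lambda/\mu}(\xx) \;=\; G_{\lambda/\mu}(\xx) \;=\; \sum_{\nu} c^{\lambda}_{\mu,\nu}\, Q_{\nu}(\xx),
\]
where $c^{\lambda}_{\mu,\nu}$ denotes the number of shifted semistandard Young tableaux $T$ of shape $\nu$ such that $u^T(\mu)=\lambda$.

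On the other hand, the classical expansion of the skew Schur $Q$-function recorded in Section~\ref{mainsection} reads $Q_{\lambda/\mu}=\sum_{\nu} b^{\lambda}_{\mu,\nu}Q_{\nu}$. Since the Schur $Q$-functions $\{Q_\nu\}$ are linearly independent in $\Omega$, comparing coefficients in the two displayed expansions forces $c^{\lambda}_{\mu,\nu}=b^{\lambda}_{\mu,\nu}$, which is precisely the asserted combinatorial description.

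There is essentially no obstacle internal to this argument; it is a genuine corollary of Theorem~\ref{glr}. The one point that deserves care, and which carries the substantive content, is the identification $G_{\lambda/\mu}=Q_{\lambda/\mu}$: one must verify that iterating $\mathcal{B}(x_i)$ on the shape $\mu$ enumerates skew shifted tableaux with the correct content weight, with the decreasing product $\prod_{j=n}^1(1+x_iu_j)$ contributing the primed $i$'s (each diagonal used at most once, since $u_j^2=0$) and the increasing product $\prod_{j=1}^n(1-x_iu_j)^{-1}$ the unprimed $i$'s, together with the check that the type B nil-Temperley-Lieb relations do imply (\ref{sk1})--(\ref{sk8}). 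Both are routine verifications, but they are where the real work behind the corollary lies.
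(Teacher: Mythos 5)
Your proposal is correct and follows exactly the paper's route: the paper likewise derives this corollary by feeding the diagonal box-adding operators into Theorem~\ref{glr} with $g=\mu$, $h=\lambda$, identifying $G_{\lambda/\mu}$ with $Q_{\lambda/\mu}$ via the border-strip interpretation of $\mathcal{B}(x)$, and comparing with the classical expansion $Q_{\lambda/\mu}=\sum_{\nu}b^{\lambda}_{\mu,\nu}Q_{\nu}$. One small correction to your closing aside: since unprimed $i$'s form a horizontal strip and primed $i$'s a vertical strip, it is the product $\prod_{j=n}^{1}(1+x_iu_j)$ (acting second, on diagonals in increasing order) that accounts for the unprimed entries and $\prod_{j=1}^{n}(1-x_iu_j)^{-1}$ (acting first, with $u_j^2=0$) that accounts for the primed ones — the reverse of what you wrote — though this does not affect the identity $G_{\lambda/\mu}=Q_{\lambda/\mu}$ or the rest of the argument.
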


The following is a direct connection between the versions of the shifted Littlewood-Richardson rule given in Corollary \ref{lrcoef} and in Corollary~\ref{lrsrule}.

Let $T$ be a tableau of shape $\nu$ with the property that $u^T \mu = \lambda$. We wish to match it to a standard filling of a shape $\lambda / \mu$ that rectifies to a fixed standard tableau of shape $\nu$. This filling will be obtained as follows: Note that $\mread(T)$ adds boxes to a shape~$\mu$ one by one, until a shape $\lambda$ is obtained. If we add a label $i$ to the $i$-th box added in this procedure, we  obtain a filling of a shape $\lambda / \mu$ that will rectify to the tableau of shape~$\nu$ in which the first box in the $i$-th row will contain the number $\lambda_1 + \ldots + \lambda_{i-1}+1$, and the numbers will be increasing by one in every row.

\begin{example}
For the computation the coefficient $b^{542}_{31, 43}$, one has the following standard shifted Young tableau of shape $(542) / (31)$ which rectifies to a fixed tableau (say, the one below) of shape~$(43)$, as follows:
\[
\ycenter
\young(~~~14,:~235,::67) \longrightarrow \young(1234,:567).
\]
If one records the diagonal in which the numbers $7,6,\ldots, 1$ are located on the skew shifted tableau at the left, one gets the sequence $w = 2145324$. Note that $w$ is the mixed reading word of the shifted semistandard Young tableau
\[
\young(1{\twop}24,:3{\fourp}5)
%\young(4213,:103),
\]
which means that its corresponding monomial $u_2u_1u_4u_5u_3u_2u_4$ comes from $P_{(4,3)}(\uu)$. Note that this monomial, when applied to a shape~$(3,1)$ gives a shape~$(5,3,2)$, exactly as the rule states.
\end{example}

\section{More results}\label{moreresults}

\subsection{Shifted tableau words}\label{wordsection}

In this section, we systematically develop the theory of the shifted plactic monoid by defining a canonical representative of each shifted plactic class and stating its characterizing properties.

Recall that the mixed insertion correspondence associates each word with its insertion and recording tableau. Associating each word in a shifted plactic class $[T]$ with its recording tableau gives a natural correspondence between representatives of $[T]$ and standard tableaux of the same shape as $T$. Thus, by constructing a canonical standard shifted Young tableau of each shape, we specify a canonical representative of each shifted plactic class.

A skew shifted tableau $T$ is a \emph{vee} if its shape is a (possibly disconnected) border strip, and the entries $i, i+1, \ldots, k$ appear in $T$ in the following manner. There exists some $i \le j \le k$ such that:
\begin{itemize}
\item the entries $i, i+1, \ldots, j$ form a vertical strip;
\item these entries are increasing down the vertical strip;
\item the entries $j, j+1, \ldots, k$ form a horizontal strip;
\item these entries are increasing from left to right;
\item each box in the vertical strip is left of those boxes in the horizontal strip that are on the same row.
\end{itemize}
The size of a vee is the number $k$ of entries. A vee is \emph{connected} if its corresponding skew shape is.

\begin{example}
A vee of shape $(5,3,2) / (3,2)$ and size $5$.
\[
\young(:6{\ten},:7,89)
\]
\end{example}

\begin{definition}
A standard shifted Young tableau of shape $\lambda = (\lambda_1, \lambda_2, \ldots, \lambda_l)$ is a \emph{special recording tableau} if it is standard, and for every $i$ such that $1 \le i \le l$, the entries $\lambda_l + \ldots + \lambda_{i+1}+1, \lambda_l + \ldots + \lambda_{i+1}+2, \ldots, \lambda_l + \ldots +\lambda_{i+1} + \lambda_i$ form a connected vee.
\end{definition}

Note that for every shape, the special recording tableau is unique.

This concept is a shifted analog of the \emph{dual reading tableau} defined by P.~Edelman and C.~Greene \cite{EG}.

\begin{example}\label{specialrecordingtableauexample}
The steps for building a special recording tableau of shape $(5,3,2)$, by adding connected vees formed by the following sets of numbers: $\{1,2\}, \{3,4,5\}$, and $\{6,7,8,9,10\}$:
\[
\ycenter \young(12) \quad , \quad \young(123,:45) \quad , \quad \young(1236{\ten},:457,::89).
\]
\end{example}

\begin{definition}\label{mreaddef}
The \emph{mixed reading word} of a shifted Young tableau $T$ is the word corresponding to the pair $(T, U)$ under the mixed insertion correspondence, where $U$ is the shifted dual reading tableau of the same shape as $T$. The mixed reading word of $T$ is denoted $\mread(T)$.
\end{definition}

\begin{example}
We find $\mread(T)$ for the tableau $T$ in Example \ref{shiftedtableauexample}. Recall that the special recording tableau of shape $(5,3,2)$ is $U$, given in Example \ref{specialrecordingtableauexample}. 

To obtain the last letter of the mixed reading word, one first removes (using the mixed insertion algorithm backwards) the element in $T$ corresponding to the largest entry in $U$, namely the $10$. In $T$, this is precisely the~$4$ in the top right corner. Thus, the last letter of $\mread(T)$ is $4$. One continues in this fashion, to obtain $\mread(T) = 3451196524$.
\end{example}

\begin{example}\label{mrwexample}
The mixed reading words of all tableaux of shape $\lambda = (3,1)$ in the alphabet~$\{ 1,2 \}$:
\[ \begin{array}{cccccccccccccc}
\young(111,:2)    &     & \young(11{\twop},:2)    &     & \young(112,:2)    &     & \young(1{\twop}2,:2) \\[.2in] 
1211    &      & 2211    &  & 1212    &     & 2212.
\end{array}
\]
\end{example}

\begin{definition}
A word $w$ in the alphabet $X$ is a \emph{shifted tableau word} if there exists a shifted Young tableau $T$ such that $w = \mread(T)$. The shape of a shifted tableau word is given by the shape of the corresponding tableau.
\end{definition}

\begin{theorem}
Every shifted plactic class $[T]$ contains exactly one shifted tableau word, namely $\mread(T)$.
\end{theorem}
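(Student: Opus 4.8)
The plan is to prove that for every shifted Young tableau $T$, the word $\mread(T)$ is a shifted tableau word in $[T]$ (this is immediate from the definition of $\mread$, since by Definition \ref{mreaddef} the pair $(T,U)$ with $U$ the special recording tableau is sent to $\mread(T)$ under mixed insertion, so $\pmix(\mread(T)) = T$ and hence $\mread(T) \in [T]$), and then to prove uniqueness: no shifted plactic class contains two distinct shifted tableau words. The crux is thus the uniqueness statement.

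For uniqueness, suppose $w = \mread(T)$ and $w' = \mread(T')$ are both shifted tableau words lying in the same shifted plactic class, i.e.\ $\pmix(w) = \pmix(w')$. Now $\pmix(\mread(T)) = T$ and $\pmix(\mread(T')) = T'$ by the computation above, so $T = T'$, and hence $\mread(T) = \mread(T')$ because the special recording tableau of a given shape is unique (the remark following Definition \ref{mreaddef}) and the mixed insertion correspondence is a bijection. So the entire argument reduces to the single point: \emph{for any shifted Young tableau $T$, mixed-inserting $\mread(T)$ recovers $T$ itself.} This is essentially the content of Definition \ref{mreaddef}: $\mread(T)$ is \emph{defined} as the word whose mixed insertion tableau is $T$ and whose mixed recording tableau is the special recording tableau $U$ of $\sh(T)$. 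What needs checking is that this is well-posed — that such a word exists and is unique — and uniqueness/existence both follow from the fact that semistandard shifted mixed insertion (as set up in the Definition of Mixed insertion in Section \ref{mainsection}) is a bijection between words in $X$ and pairs $(P,Q)$ consisting of a semistandard shifted Young tableau $P$ and a standard shifted Young tableau $Q$ of the same shape. Given that bijection, the pair $(T, U)$ has a unique preimage, which is $\mread(T)$.

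Therefore I would structure the proof in three short steps. First, invoke that mixed insertion is a bijection onto pairs (semistandard $P$, standard $Q$) of equal shape; since the special recording tableau $U$ of shape $\sh(T)$ is a standard shifted Young tableau of the same shape as $T$, the pair $(T,U)$ has a well-defined unique preimage $\mread(T)$, and $\pmix(\mread(T)) = T$ so $\mread(T) \in [T]$; hence $[T]$ contains \emph{at least} one shifted tableau word. Second, observe that any shifted tableau word in $[T]$ has the form $\mread(T'')$ for some shifted Young tableau $T''$, and applying $\pmix$ gives $T'' = \pmix(\mread(T'')) = \pmix(w) = T$; thus every shifted tableau word in $[T]$ equals $\mread(T)$. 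Third, conclude that $[T]$ contains \emph{exactly} one shifted tableau word, namely $\mread(T)$.

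The main (indeed only) obstacle is verifying that the map $w \mapsto (\pmix(w), \qmix(w))$ is a bijection onto its stated target, and in particular that it is \emph{injective} — that one can run the mixed insertion algorithm backwards to reconstruct $w$ from $(T,U)$. This is the reversibility of each insertion step: given the shape difference between $T_{i-1}$ and $T_i$ recorded in $U$, one must identify which cell was the final resting place, and then undo the chain of bumps (including the column-insertions triggered by primed bumped entries and by main-diagonal entries) to recover $w_i$ and $T_{i-1}$. Haiman established this for the standard (permutation) case, and the semistandard generalization requires one to check that the bumping routes remain uniquely reversible when entries may repeat — which is where the rules ``each $k$ at most once per column'' and ``each $k'$ at most once per row'' do their work, guaranteeing that the reverse-bumping target in each row or column is unambiguous. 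I expect this verification to be routine but slightly delicate, and it is the part a careful reader should check; everything else in the statement is a formal consequence of having that bijection in hand.
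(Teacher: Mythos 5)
Your argument is correct and is essentially the paper's own proof: existence is immediate because $\pmix(\mread(T)) = T$ by definition, and uniqueness follows because a shifted plactic class determines its tableau, so any shifted tableau word in $[T]$ must be $\mread(T)$. Your additional remarks on the reversibility of mixed insertion just make explicit the well-posedness of Definition \ref{mreaddef}, which the paper takes for granted.
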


\begin{proof}
The fact that $\pmix(\mread(T)) = T$ is direct from the definition. The uniqueness follows from the fact that each shifted plactic class corresponds to a unique shifted Young tableau.
\end{proof}

We proceed to characterize shifted tableau words by certain properties.

Recall that a hook word is a word $w = w_1 \cdots  w_l$ such that for some $1 \le k \le l$, the inequalities (\ref{hookequation}) hold. It is formed by the \emph{decreasing part} $w \downarrow   = w_1 \cdots w_k$, and the \emph{increasing part} $w \uparrow = w_{k+1} \cdots w_l$. Note that the decreasing part of a hook word is always nonempty.

\begin{theorem}\label{tableauwordtheorem}
A word $w$ is a shifted tableau word if and only if it is of the form $w = u_l u_{l-1} \cdots u_1$, and:
\begin{enumerate}
\item each $u_i$ is a hook word,
\item $u_i$ is a hook subword of maximum length in $u_l u_{l-1} \cdots u_i$, for $1 \le i \le l-1$.
\end{enumerate}
Furthermore, for $1 \le i \le l$, the shape of the tableau $\pmix(u_l\cdots u_{i+1} u_i)$ is $(\lambda_i, \lambda_{i+1}, \ldots, \lambda_i)$, where $|u_i| = \lambda_i$. In particular, $\sh(\pmix(u)) = \lambda$.
\end{theorem}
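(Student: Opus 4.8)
The plan is to prove the two implications of the equivalence separately, using the machinery of semistandard decomposition tableaux (SSDT) and the bijection $\Phi$ between SSDT and shifted Young tableaux that preserves reading words (stated as Theorem~\ref{bijection}), together with the fact that the SK recording tableau and the mixed recording tableau coincide (Theorem~\ref{samerecordingtableau}). The crucial translation is this: a word $w$ is a shifted tableau word if and only if its SK insertion tableau $R = \psk(w)$, read row by row from bottom to top, \emph{equals} $w$ itself — equivalently, if and only if $w$ is the reading word of an SSDT whose SK recording tableau is the special recording tableau. Condition~(1) says each $u_i$ is a hook word, which by the definition of SSDT is exactly the requirement that the rows of a shape-$\lambda$ array be hook words; condition~(2) is precisely condition~(B) from the introduction, i.e. the defining condition of an SSDT that the longest hook subword of $u_{i+1}u_i$ is $u_i$. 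So the content of the theorem is: $w$ is a shifted tableau word $\iff$ $w = \word(R)$ for some SSDT $R$.

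For the forward direction, suppose $w = \mread(T)$. By Definition~\ref{mreaddef} the pair $(T, U)$ with $U$ the special recording tableau mixed-inserts from $w$; applying $\Phi^{-1}$ gives an SSDT $R = \Phi^{-1}(T)$ with $\word(R) = \mread(T) = w$ and $\sh(R) = \sh(T) = \lambda$. Writing $R$'s rows top-to-bottom as $u_1, \dots, u_l$, we get $w = u_l \cdots u_1$ with each $u_i$ a hook word of length $\lambda_i$, and condition~(B) — i.e.\ condition~(2) — holds because $R$ is an SSDT. This gives (1) and (2). For the shape statement, I would argue that for each $i$, the word $u_l \cdots u_{i+1} u_i$ is itself the reading word of the SSDT consisting of rows $u_i, u_{i+1}, \dots, u_l$ — wait, one must be careful about row order: the rows read bottom-to-top are $u_l, \dots, u_i$, so the relevant sub-array has rows $u_i$ (top) down through $u_l$ (bottom), which is a valid SSDT of shape $(\lambda_i, \lambda_{i+1}, \dots, \lambda_l)$ since conditions~(1),(2) are inherited; then by Theorem~\ref{bijection}, $\Phi$ of this SSDT is a shifted Young tableau of the same shape, and $\pmix$ of its reading word is that tableau, so $\sh(\pmix(u_l \cdots u_i)) = (\lambda_i, \dots, \lambda_l)$. (I should double-check that the paper's ``$(\lambda_i, \lambda_{i+1}, \ldots, \lambda_i)$'' in the statement is a typo for ``$(\lambda_i, \lambda_{i+1}, \ldots, \lambda_l)$''.) Taking $i=1$ yields $\sh(\pmix(w)) = \lambda$.

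For the converse, suppose $w = u_l \cdots u_1$ satisfies (1) and (2). Stack the $u_i$ as rows (row $i$ from the top) to form an array $R$ of shape $\lambda = (|u_1|, \dots, |u_l|)$; conditions (1) and (2) are exactly the conditions that make $R$ an SSDT, and $\word(R) = w$ by definition. Then $T := \Phi(R)$ is a shifted Young tableau with $\mread(T) = \word(R) = w$ by the reading-word-preserving property of $\Phi$, so $w$ is a shifted tableau word; the shape claim follows as above. The one nontrivial point requiring care — and the place I expect to lean hardest on earlier results — is that conditions (1)+(2) genuinely characterize SSDT reading words and that the hook-word/maximal-hook-subword formulation matches condition~(B) verbatim; this is essentially definitional once Proposition~\ref{ssdtproposition} is in hand, but the self-referential nature of condition~(2) (the constraint on $u_i$ involves all the later blocks $u_{i+1}, \dots, u_l$ simultaneously, packaged as the single word $u_{i+1} u_i$ versus the full prefix) needs to be reconciled with the row-by-row ``local'' condition~(B) defining an SSDT. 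The main obstacle is therefore not any computation but correctly invoking the chain $\Phi$, Theorem~\ref{bijection}, and Proposition~\ref{ssdtproposition} to bridge ``shifted tableau word'' and ``SSDT reading word,'' and tracking row orders carefully so the shape $(\lambda_i, \ldots, \lambda_l)$ comes out right.
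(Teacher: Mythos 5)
Your proposal is correct and follows essentially the same route as the paper: reduce ``shifted tableau word'' to ``reading word of an SSDT'' via the reading-word-preserving bijection $\Phi$ of Theorem~\ref{bijection}, observe that conditions (1) and (2) are exactly the defining conditions of an SSDT, and get the shape claim by noting that the bottom rows $u_l,\ldots,u_i$ of an SSDT again form an SSDT. (Your reading of ``$(\lambda_i,\ldots,\lambda_i)$'' as a typo for ``$(\lambda_i,\ldots,\lambda_l)$'' is also right, and condition (2) of the theorem is verbatim the SSDT condition, so no appeal to Proposition~\ref{ssdtproposition} is actually needed here.)
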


\begin{proposition}\label{ssdtproposition}
An equivalent definition of a shifted tableau word is obtained when one replaces condition \emph{(2)} in Theorem \ref{tableauwordtheorem} by the following:
\begin{enumerate}
\item[($2'$)] $u_i$ is a hook subword of maximum length in $u_{i+1} u_i$, for $1 \le i \le l-1$.
\end{enumerate}
\end{proposition}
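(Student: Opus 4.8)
The plan is to deduce Proposition~\ref{ssdtproposition} from Theorem~\ref{tableauwordtheorem}. Since that theorem says conditions (1) and (2) together characterize shifted tableau words, it is enough to show that conditions (1) and ($2'$) characterize the same class of words. One implication is immediate: assume (1) and (2) and fix $1\le i\le l-1$. The word $u_{i+1}u_i$ is a suffix of $u_l u_{l-1}\cdots u_i$, so every hook subword of $u_{i+1}u_i$ is a hook subword of $u_l\cdots u_i$ and hence has length at most $|u_i|$ by (2); as $u_i$ is itself a hook word by (1), it is a hook subword of $u_{i+1}u_i$ of maximal length, which is ($2'$).

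For the converse the real content is the following lemma, which I would isolate and prove first.

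\emph{Lemma.} If $u$ and $v$ are nonempty hook words with $|u|\ge|v|$, then $uv$ contains a hook subword of length $>|v|$.

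I would prove the Lemma by comparing the first letter $v_1$ of $v$ with the last letter $u_m$ of $u$, where $m=|u|$. If $v_1\ge u_m$, then $u$ followed by $v_1$ is again a hook word, because the weakly increasing tail of $u$ ends $\cdots\le u_m\le v_1$; its length is $|u|+1>|v|$. If $v_1<u_m$, then $u_m$ prepended to $v$ is a hook word, because the strictly decreasing part of $v$ becomes $u_m>v_1>v_2>\cdots$; its length is $|v|+1>|v|$. In either case $uv$ has a hook subword longer than $v$, proving the Lemma.

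Now suppose (1) and ($2'$) hold. Each $u_{i+1}$, being a hook word occurring as a block of $u_{i+1}u_i$, is a hook subword of $u_{i+1}u_i$, so $|u_{i+1}|\le|u_i|$; were there equality, the Lemma applied with $u=u_{i+1}$, $v=u_i$ would produce a hook subword of $u_{i+1}u_i$ longer than $u_i$, contradicting ($2'$). Hence $|u_1|>|u_2|>\cdots>|u_l|$, so $\lambda:=(|u_1|,\dots,|u_l|)$ is a strict partition, and the array $R$ whose rows, read top to bottom, are $u_1,\dots,u_l$ is a filling of the shifted shape $\lambda$ by hook words satisfying ($2'$), i.e.\ a semistandard decomposition tableau with $\word(R)=u_l u_{l-1}\cdots u_1=w$. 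By the shape- and reading-word-preserving bijection $\Phi$ of Theorem~\ref{bijection}, $w=\mread(\Phi(R))$, so $w$ is a shifted tableau word and $\pmix(w)=\Phi(R)$, whence $\sh(\pmix(w))=\sh(R)=\lambda$. Theorem~\ref{tableauwordtheorem} then provides a decomposition $w=u'_l\cdots u'_1$ into hook words satisfying (2), with $|u'_i|=\lambda_i=|u_i|$; since this decomposition and $u_l\cdots u_1$ split $w$ at the same positions, $u'_i=u_i$ for all $i$, and therefore the original $u_i$ satisfy (2). The only genuinely combinatorial step is the Lemma; the one other thing to check is that there is no circularity, and there is none, because the semistandard decomposition tableaux appearing in Theorem~\ref{bijection} are defined by the \emph{local} condition ($2'$), so that theorem does not presuppose the present proposition, which merely records that ($2'$) and (2) single out the same words.
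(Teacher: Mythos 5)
Your easy direction ((2) $\Rightarrow$ ($2'$)) is fine, and your Lemma about two hook words is correct and does establish that under ($2'$) the lengths $|u_i|$ strictly decrease, so the rows at least fit into a shifted shape. But the decisive step of the converse is circular. You write that the array $R$ with rows $u_1,\dots,u_l$ satisfying (1) and ($2'$) is ``a filling of the shifted shape $\lambda$ by hook words satisfying ($2'$), i.e.\ a semistandard decomposition tableau,'' and you justify the non-circularity by asserting that the SSDTs of Theorem~\ref{bijection} are defined by the local condition ($2'$). That is not the case: Definition~\ref{ssdt} imposes the \emph{global} condition that $u_i$ be a hook subword of maximum length in $u_l u_{l-1}\cdots u_i$ (the paper's remark that SSDTs can be recognized two rows at a time is itself derived \emph{from} Proposition~\ref{ssdtproposition}). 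So the assertion that (1) and ($2'$) make $R$ an SSDT is exactly the implication ``local $\Rightarrow$ global'' that the proposition claims, and your argument supplies no proof of it. Your Lemma does not fill this hole: knowing that no hook subword of $u_{i+1}u_i$ exceeds $|u_i|$ for each adjacent pair says nothing a priori about hook subwords of $u_l\cdots u_i$ that draw letters from three or more blocks.

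This missing implication is where the real work lies, and the paper's proof is genuinely nontrivial: it reduces to the case $l=3$ and argues that if $u_3u_2$ and $u_2u_1$ are shifted tableau words but $u_3u_2u_1$ had a hook subword of length $d>|u_1|$, then by the shifted analog of Greene's theorem (Theorem~\ref{shiftedgreenestheorem}) the top row of $\qsk(u_3u_2u_1)$ would have length $d$, and applying the jeu-de-taquin deletion operator $\Delta$ repeatedly (Lemma~\ref{oneevacuation}) to strip off $u_3$ would force the top row of $\qsk(u_2u_1)$ to have length $d>|u_1|$, contradicting ($2'$) for the pair $u_2,u_1$. Some argument of this kind -- trading the global maximal-hook-subword length for an invariant of the insertion/recording tableaux that behaves predictably under deleting the top block -- is needed; without it your proof establishes only that the shape is a strict partition, not the proposition.
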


We have been unable to construct an analog of jeu de taquin for skew semistandard shifted tableaux that is consistent with the shifted plactic relations. However, we have found a way to define analogs of the mixed reading word and the rectification $\rect(T)$ of such a tableau $T$.

Consider the alphabets $-X = \{ \cdots < -3 < -2 < -1 \}$ and $-X' = \{ \cdots -2 < -2' < -1 < -1'\}$, where by convention, any letter of $-X \, \, (-X')$ is smaller than any letter in $X \, \,(X')$. The mixed reading word of a skew shifted Young tableau $T$ of shape $\lambda / \mu$ is defined as follows: Fill the shape $\mu$ with any shifted Young tableau with letters in $-X'$. In this way, one obtains a shifted Young tableau $U$ with letters in $X' \cup -X'$. Note that $\mread(U)$ has letters in the alphabet $X \cup -X$. Let $\mread(T)$ be the restriction of $\mread(U)$ to the alphabet $X$, and let $\rect(T)$, the \emph{rectification} of $T$ be $\pmix(\mread(T))$. The following lemma confirms that this reading is well defined.

\begin{lemma}\label{samereading}
Let $T$ be a shifted Young skew tableau of shape $\lambda / \mu$. Both $\mread(T)$ and $\rect(T)$ are independent of the filling of the shape $\mu$ with letters in $-X$.
\end{lemma}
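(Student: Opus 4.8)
The plan is to show that changing one entry of the filling of $\mu$ by letters in $-X$ does not affect $\mread(T)$, and then to iterate. So fix a shifted Young tableau $U$ on $\lambda$ whose restriction to $\mu$ uses letters in $-X'$, and consider replacing that filling by another admissible one; it suffices to handle the case where the two fillings differ in a single box, producing tableaux $U$ and $U'$. I would first observe that the two negative fillings are connected by a sequence of elementary moves that only swap two adjacent negative values (in the order of $-X'$) in positions that are comparable in the tableau order, so the real task is to compare $\mread(U)$ and $\mread(U')$ when $U$ and $U'$ agree outside $\mu$ and differ by such a local swap of negative letters.

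The key tool is the compatibility of mixed reading with the shifted plactic monoid (Theorem~\ref{monoidtheorem}) together with the fact (Remark~\ref{placticremark}, Proposition~\ref{refinementprop}) that the shifted plactic relations only ever rearrange letters within ``non-hook'' 4-letter windows. Because every letter of $-X$ is smaller than every letter of $X$, and because the restriction of $\mread(U)$ to $-X$ is itself a shifted tableau word for the subtableau on $\mu$ (which we have the freedom to choose), the positive letters of $\mread(U)$ cannot be moved ``past'' the negative block in any shifted plactic relation except in a controlled way. More precisely, I would argue that $\mread(U)$ and $\mread(U')$ are shifted plactic equivalent as words in $X\cup -X$ via relations each of which involves at most one positive letter together with negative letters — and a relation of the form~(\ref{sk1})--(\ref{sk8}) that uses at most one letter from $X$ acts as the identity on the restriction to $X$, since deleting all but (at most) one letter from a 4-letter relation leaves both sides equal. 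Hence restricting to $X$ gives $\mread(T)=\mread(T')$, and then $\rect(T)=\pmix(\mread(T))=\pmix(\mread(T'))=\rect(T')$.

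The step I expect to be the main obstacle is the claim that the chain of shifted plactic relations transforming $\mread(U)$ into $\mread(U')$ can be chosen to involve at most one positive letter at a time. The honest way to get this is to go back to mixed insertion itself: changing a single negative entry of the prefilled region of $U$ changes, at each stage of the insertion of the positive letters, at most which small negative element gets bumped, never which positive element is placed or bumped, because bumping is governed by the relative order of entries and all negative entries sit strictly below every positive one. I would make this precise by an induction on the mixed insertion process, showing that $\pmix$ and the bumping route of each positive letter $w_i$ are unaffected by the swap, so that the recording tableau restricted to the boxes added by positive letters is identical; since $\mread$ is defined via the special recording tableau (whose restriction to the sub-shape $\lambda/\mu$ is again the appropriate special recording tableau by the connected-vee structure of Definition~\ref{mreaddef}), this yields $\mread(T)=\mread(T')$ directly, bypassing the need to track the plactic relations explicitly. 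Finally, iterating over the sequence of single-box swaps that connects any two admissible negative fillings of $\mu$ completes the proof.
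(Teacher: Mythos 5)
There is a genuine gap, and it sits exactly at the step you flag as the main obstacle. First, the plactic-relations route cannot work even in principle: $U$ and $U'$ are \emph{distinct} shifted Young tableaux (they differ in an entry), so by Theorem~\ref{monoidtheorem} the words $\mread(U)$ and $\mread(U')$ lie in different shifted plactic classes over $X\cup -X$; there is no chain of relations (\ref{sk1})--(\ref{sk8}) connecting them whose restriction to $X$ you could analyze.

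Your fallback argument via the insertion algorithm has the monotonicity of bumping pointed in the wrong direction. Forward bumping chains in mixed insertion are \emph{increasing}: each letter bumps something strictly larger. So inserting a \emph{positive} letter indeed never disturbs a negative entry, but inserting a \emph{negative} letter can and typically does ascend out of $-X'$ and relocate positive entries (for instance, inserting $-1$ into the one-row tableau with entries $1,2$ bumps the diagonal $1$, which in turn displaces the $2$). Since $\mread(U)$ is by definition the word whose recording tableau is the special recording tableau of the \emph{full} shape $\lambda$, its negative letters are interleaved with the positive ones rather than forming a prefix, so an induction ``over the insertion of the positive letters'' skips precisely the stages that cause trouble; moreover $\mread(U)$ and $\mread(U')$ are different words, so their two insertion histories cannot be compared stage by stage as you propose. (The claimed restriction of the special recording tableau to $\lambda/\mu$ is also not a notion the paper defines, and the cells of $\lambda/\mu$ need not coincide with the cells created at the steps where positive letters are inserted.) The paper's proof runs the process \emph{backwards}: $\mread(U)$ is extracted by reverse bumping in the cell order dictated by the special recording tableau, reverse chains are \emph{decreasing}, and hence once a reverse chain enters $-X'$ it stays there; in particular any extraction whose output letter lies in $X$ involves only entries of $X'$, which is what makes the restriction to $X$ independent of the negative filling. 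Your reduction to single-box swaps of the negative filling also introduces an unproved connectivity claim that the paper's argument does not require.
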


\begin{proof}
Let $a$ be a letter in $X \cup -X$, and $U$ a shifted Young tableau with letters in $X' \cup -X'$. The mixed insertion of $a$ into $U$ gives rise to a sequence of letters $a = a_1, a_2, \ldots, a_k = b$, each getting bumped at every stage, until $b$ gets added to a new row or column. By definition, $a_1 < a_2 < \cdots < a_k$. Thus, when one removes $b$ using the inverse process, the letters bumped are $b = a_k > \cdots > a_1 = a$. Thus, if $a \in -X'$, there is some $1 \le i \le k$ such that $a_i, a_{i-1}, \ldots, a_1 \in -X'$. This implies that once the reverse bumping sequence enters $-X'$, it will stay in $-X'$. Since we are restricting $\mread(U)$ to $X'$, the result follows.
\end{proof}

We then introduce the \emph{shifted plactic skew Schur $P$-function} of shape $\lambda / \mu$ to be the following element of $\QQ \shpl$:
\[
\mathcal{P}_{\lambda / \nu} = \sum_{\sh(T) = \lambda / \mu}[\rect(T)].
\]

\begin{conjecture}\label{conj}
$\mathcal{P}_{\lambda / \mu}$ belongs to the ring generated by the shifted plactic Schur $P$-functions.
\end{conjecture}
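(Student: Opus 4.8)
The identity one expects here is the shifted-plactic lift of the classical skew expansion, namely
\[
\mathcal{P}_{\lambda / \mu} \;=\; \sum_{\nu} 2^{\ell(\mu)+\ell(\nu)-\ell(\lambda)}\, b^{\lambda}_{\mu,\nu}\, \mathcal{P}_{\nu}.
\]
By Theorem \ref{commutetheorem} the $\mathcal{P}_{\nu}$ span the ring they generate, so Conjecture \ref{conj} is \emph{equivalent} to the assertion that $\mathcal{P}_{\lambda/\mu}$ lies in that span. The linear map $\QQ\shpl \to \QQ[[x_1,x_2,\dots]]$ sending $[w]\mapsto x^{\pmix(w)}$ is a $\QQ$-algebra homomorphism (contents add under concatenation, and $\pmix$ preserves content), it carries $\mathcal{P}_{\nu}$ to $P_{\nu}$, and — because $\rect$ preserves content — it carries $\mathcal{P}_{\lambda/\mu}$ to the ordinary skew function $P_{\lambda/\mu}$. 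Hence, once membership is established, the coefficients are forced to be the ones displayed above, using that the $P_{\nu}$ are linearly independent. So the entire content of the conjecture is the combinatorial statement, which I will call $(\star)$: for every shifted Young tableau $S$ of a straight shape $\nu$, the number $N_S$ of skew shifted Young tableaux $T$ with $\sh(T)=\lambda/\mu$ and $\rect(T)=S$ depends only on $\nu$. (Indeed, expanding both descriptions of $\mathcal{P}_{\lambda/\mu}$ in the linearly independent family $\{[T]\}$ shows that membership in the span of the $\mathcal{P}_\nu$ is literally the assertion that $N_T$ is a function of $\sh(T)$ alone.)

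My plan to prove $(\star)$ has three steps. \emph{Step 1 (standardization).} Establish that mixed insertion commutes with standardization, $\pmix(\stan(w))=\stan(\pmix(w))$ with the mixed recording tableau unchanged, and that the $-X'$ recipe defining $\mread$ of a skew tableau (Lemma \ref{samereading}) is compatible with standardizing its $X$-part; this produces, for each content $\alpha$, a content-preserving injection $T\mapsto\stan(T)$ from skew shifted tableaux of shape $\lambda/\mu$ and content $\alpha$ to \emph{standard} skew shifted tableaux of that shape, intertwining $\rect(T)$ with $\stan(\rect(T))$. \emph{Step 2 (reduce to Stembridge).} For standard skew tableaux the answer is known: by \cite{St} (Corollary \ref{lrsrule}) the number of standard skew shifted tableaux of shape $\lambda/\mu$ which rectify, under standard shifted jeu de taquin, to a fixed standard shifted tableau $Q$ of shape $\nu$ is $b^{\lambda}_{\mu,\nu}$, independent of $Q$; and on standard tableaux ``$\rect(T)=Q$'' in the sense of this paper coincides with ``$T$ rectifies to $Q$ by standard shifted jeu de taquin'', because on standard tableaux that rectification is, via Sagan--Worley/Haiman duality \cite{Sa,Wo,Ha}, the inverse of mixed insertion, exactly as in the ordinary case. \emph{Step 3 (the fibres).} It remains to show that, among the standard skew tableaux of shape $\lambda/\mu$ rectifying to a fixed $Q=\stan(S)$, the number that arise as $\stan(T)$ for some semistandard $T$ with $\rect(T)=S$ is the same for all $S$ of shape $\nu$ with the given content. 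I would attack this one entry-value at a time: on a single value the skew tableau is a union of a vertical and a horizontal strip, so the relevant bumping in mixed insertion is governed entirely by the hook-word combinatorics of Remark \ref{placticremark} and Theorem \ref{tableauwordtheorem}, and the interaction of an unstandardization layer with standard jeu de taquin can be tracked locally.

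The crux — and the reason the statement is left as a conjecture — is precisely Step 3: there is as yet no jeu de taquin for \emph{semistandard} skew shifted tableaux consistent with the shifted plactic relations. What one really wants is the shifted analogue of the classical fact that jeu de taquin on a skew semistandard tableau does not change $\pmix$ of its reading word; all of the straight-shape machinery of this paper (mixed insertion, the SK correspondence, semistandard decomposition tableaux, the diagonal box-adding operators) is available, but none of it yet yields a manifestly semistandard rectification on skew shapes, and producing such a procedure — or a bijective substitute realizing the fibres of Step 3 — is the outstanding obstacle.

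A possibly cleaner route avoids jeu de taquin altogether and uses the Fomin--Greene side of the theory \cite{FG}. Combining the noncommutative Cauchy identity (Corollary \ref{nci}), the commutation of the $P_{\lambda}(\uu)$ (Corollary \ref{pcommute}), and the computation $\langle\prod_i\mathcal{B}(x_i)\,\mu,\lambda\rangle = Q_{\lambda/\mu}(\xx)$ for the diagonal box-adding operators, one would try to set up a weight-preserving bijection between skew shifted Young tableaux $T$ of shape $\lambda/\mu$ and pairs $(W,S')$, where $W$ is a straight shifted tableau of some shape $\nu$ with $u^{W}(\mu)=\lambda$ (there are $b^{\lambda}_{\mu,\nu}$ of these by Corollary \ref{lrcoef}) and $S'$ is a shifted tableau of shape $\nu$, such that $[\rect(T)]=[S']$. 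This would give $(\star)$ directly, with $N_S=b^{\lambda}_{\mu,\nu}$ for $\nu=\sh(S)$; the remaining difficulty is to make such a bijection respect the entire shifted plactic class of $\rect(T)$ and not merely its content, which again forces one to understand how $\rect$ interacts with the box-adding dynamics.
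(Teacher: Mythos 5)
This statement is left as a \emph{conjecture} in the paper --- there is no proof of it anywhere in the text --- and your submission, by your own admission, does not supply one either. Your preliminary reductions are sound and consistent with what the paper does say: since Theorem \ref{commutetheorem} identifies the ring generated by the $\mathcal{P}_{\nu}$ with their linear span, and since the shifted plactic classes $[S]$ of straight shape are linearly independent in $\QQ\shpl$ while $\mathcal{P}_{\nu}=\sum_{\sh(S)=\nu}[S]$, membership of $\mathcal{P}_{\lambda/\mu}=\sum_{\sh(T)=\lambda/\mu}[\rect(T)]$ in that span is indeed equivalent to your statement $(\star)$ that the fibre count $N_S=\#\{T:\sh(T)=\lambda/\mu,\ \rect(T)=S\}$ depends only on $\sh(S)$; and the content homomorphism does force the coefficients you display. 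This matches the corollary the paper draws \emph{from} the conjecture (that the coefficient of $P_{\nu}$ in $P_{\lambda/\mu}$ counts skew tableaux $T$ with $\pmix(\mread(T))=U$ for a fixed $U$ of shape $\nu$).

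The genuine gap is exactly where you locate it: Step 3 is not an argument but a restatement of the open problem. The paper itself flags the obstruction in the sentence immediately preceding the conjecture (``We have been unable to construct an analog of jeu de taquin for skew semistandard shifted tableaux that is consistent with the shifted plactic relations''), and nothing in your proposal --- neither the entry-by-entry hook-word analysis nor the Fomin--Greene/box-adding route --- is developed to the point of controlling how the semistandard structure interacts with rectification on the fibres over a fixed standard $Q$. Two smaller points also remain unproved assertions rather than established facts: that the paper's $\rect$ (defined via the $-X'$ filling and Lemma \ref{samereading}) agrees with shifted jeu de taquin rectification on standard skew tableaux, and that standardization of a skew tableau intertwines $\rect$ with $\stan\circ\rect$; both are plausible consequences of Lemma \ref{stansyt} and Theorem \ref{jdttheorem} but would need to be written out. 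As it stands, your text is a correct reduction of the conjecture to $(\star)$ together with a research plan, not a proof.
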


\begin{corollary}[of Conjecture \ref{conj}]
Fix a shifted Young tableau $U$ of shape $\nu$. The coefficient of $P_{\nu}$ in $P_{\lambda / \mu}$ is equal to the number of skew shifted Young tableaux $T$ with \hbox{$\pmix(\mread(T)) = U$}.
\end{corollary}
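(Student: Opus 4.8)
The plan is to assume Conjecture~\ref{conj}, record the resulting expansion of $\mathcal{P}_{\lambda/\mu}$ in the basis of shifted plactic Schur $P$-functions, and then extract the coefficient of $\mathcal{P}_\nu$ in two different ways: once inside the shifted plactic algebra $\QQ\shpl$, which identifies that coefficient with the desired tableau count, and once after passing to commuting variables, which identifies it with the coefficient of $P_\nu$ in the ordinary skew Schur $P$-function $P_{\lambda/\mu}$.

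By Conjecture~\ref{conj}, $\mathcal{P}_{\lambda/\mu}$ lies in the ring generated by the $\mathcal{P}_\rho$; by Theorem~\ref{commutetheorem} this ring is the $\QQ$-linear span of the family $\{\mathcal{P}_\rho : \rho \text{ strict}\}$, which is linearly independent because distinct $\mathcal{P}_\rho = \sum_{\sh(S)=\rho}[S]$ are supported on disjoint sets of basis elements of $\QQ\shpl$ (tableaux of different shapes). Hence there is a unique expansion $\mathcal{P}_{\lambda/\mu} = \sum_\rho c_\rho \mathcal{P}_\rho$. Fix now the shifted Young tableau $U$ of shape $\nu$ and compare, on both sides, the coefficient of the basis element $[U]\in\QQ\shpl$. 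On the right, $[U]$ occurs only in $\mathcal{P}_\nu$ and there with coefficient $1$, so the coefficient of $[U]$ equals $c_\nu$. On the left, $\mathcal{P}_{\lambda/\mu} = \sum_{\sh(T)=\lambda/\mu}[\rect(T)]$, and $[\rect(T)]=[U]$ iff $\rect(T)=U$, i.e.\ iff $\pmix(\mread(T))=U$; so the coefficient of $[U]$ equals the number of skew shifted Young tableaux $T$ of shape $\lambda/\mu$ with $\pmix(\mread(T))=U$. This yields $c_\nu = \#\{T : \sh(T)=\lambda/\mu,\ \pmix(\mread(T))=U\}$.

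For the second computation of $c_\nu$, apply the content homomorphism $\QQ\shpl \to \QQ[[x_1,x_2,\dots]]$, $[w]\mapsto x^w$, which is well defined since the shifted plactic relations~(\ref{sk1})--(\ref{sk8}) merely permute letters. By definition $\mathcal{P}_\rho\mapsto P_\rho$, and the crucial point is that $\mathcal{P}_{\lambda/\mu}\mapsto P_{\lambda/\mu}$, which holds because $\rect(T)$ and $T$ have the same content. Indeed, mixed insertion preserves content, so $\mread(U)$ carries the content of the auxiliary straight-shape tableau $U$ obtained from $T$ by filling $\mu$ with negative letters; restricting $\mread(U)$ to $X$ discards exactly the content contributed by the $\mu$-part, so (using Lemma~\ref{samereading} for well-definedness) $\mread(T)$, and hence $\rect(T)=\pmix(\mread(T))$, carries the content of $T$. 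Therefore $\mathcal{P}_{\lambda/\mu}\mapsto \sum_{\sh(T)=\lambda/\mu}x^T = P_{\lambda/\mu}$. Applying the content map to $\mathcal{P}_{\lambda/\mu}=\sum_\rho c_\rho\mathcal{P}_\rho$ gives $P_{\lambda/\mu}=\sum_\rho c_\rho P_\rho$ in $\Omega$, and since the $P_\rho$ form a basis of $\Omega$, $c_\nu$ is the coefficient of $P_\nu$ in $P_{\lambda/\mu}$. Combining the two expressions for $c_\nu$ proves the corollary.

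The routine base-change and linear algebra aside, the one point needing care is that $\mathcal{P}_{\lambda/\mu}$ maps to the honest skew Schur $P$-function $P_{\lambda/\mu}$, rather than merely to some element of $\Omega$, under the content map. This reduces to content-preservation of mixed insertion, an easy induction on the length of a bumping chain: each chain permutes the values already present and deposits one new box of the inserted value, and the priming step in case~(2) of the insertion rule leaves the value unchanged. Granting that, the content of $\mread(T)$ is the restriction to $X$ of the content of $U$, i.e.\ exactly the content of $T$; well-definedness of $\mread(T)$ and $\rect(T)$ is already guaranteed by Lemma~\ref{samereading}. Everything else is a formal consequence of Conjecture~\ref{conj} and Theorem~\ref{commutetheorem}.
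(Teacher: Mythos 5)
Your argument is correct, and it is the evident intended derivation: the paper states this corollary without proof, as an immediate consequence of Conjecture~\ref{conj} together with Theorem~\ref{commutetheorem}. Your two coefficient extractions (of the basis element $[U]$ in $\QQ\shpl$, and of $P_\nu$ after applying the content map, with the needed observation that $\rect(T)$ has the same content as $T$) fill in exactly the steps the paper leaves implicit.
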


%PROVE THIS WHEN YOU HAVE TIME
\iffalse
For the moment we can prove a slightly weaker statement than Conjecture \ref{conj}.

\begin{theorem}
$\mathcal{P}_{\lambda / \mu}$ belongs to the ring generated by the plactic Schur functions.
\end{theorem}

\begin{proof}
In \cite[Proposition 7.1]{Ha}, Haiman gives a method (conjectured by Shor), in which the Robinson-Schensted insertion tableau of a word is obtained solely from its mixed insertion tableau. That is, $\pr(w)$ can be obtained solely from $\pmix(w)$, without knowing $w$. Thus, 

The projection $\pi(\mathcal{P}_{\lambda / \mu})$ (which lives in $\QQ \pl$) belongs to the ring generated by the plactic Schur functions $\mathcal{S}_{\mu}$. This will enable us to find a combinatorial interpretation for the coefficients in the Schur expansion of the skew Schur $P$-functions.
\end{proof}
\fi
%END COMMENT

\subsection{Semistandard decomposition tableaux}\label{ssdtsection}

The fact that the mixed reading word of $T$ can be decomposed into hook subwords, precisely of the same lengths as the rows of $T$, hints that arranging these words as rows of a shifted diagram would yield an interesting object. Based on the notion of a standard decomposition tableau introduced by W.~Kra\'skiewicz \cite{Kr} and further developed by T.~K.~Lam \cite{TKL}, we introduce the following.

\begin{definition}\label{ssdt}
A \emph{semistandard decomposition tableau} (SSDT) is a filling $R$ of a shape $\lambda = (\lambda_1, \ldots, \lambda_l)$ with elements of $X$, such that:
\begin{enumerate}
\item the word $u_i$ formed by reading the $i$-th row from left to right is a hook word of length $\lambda_i$, and
\item $u_i$ is a hook subword of maximum length in $u_l u_{l-1} \cdots u_i$, for $1 \le i \le l-1$.
\end{enumerate}

The \emph{reading word} of $R$ is $\word(R) = u_l u_{l-1} \cdots u_1$. The \emph{content} of an SSDT is the content of $\word(R)$.
\end{definition}

By Proposition \ref{ssdtproposition}, a filling $R$ of a shape $\lambda$ is an SSDT if and only if each of the fillings formed by two consecutive rows is an SSDT. By the definition, a filling $R$ is an SSDT of shape $\lambda$ if and only if $\word(R)$ is a shifted tableau word of shape $\lambda$.

\begin{example}\label{ssdtexample}
An SSDT, with its corresponding reading word:
\[
\ycenter
R = \young(96524,:511,::34)    \qquad    \word(R) = 34 \, \, 511 \, \, 96524.
\]
The content of $R$ is $(2,1,1,2,2,1,0,0,1)$.
\end{example}

\begin{remark}
One can see that an SSDT is a shifted analog of a (ordinary) Young tableau for the following reason. A word $w$ in the alphabet $X$ is the reading word of a Young tableau $T$ of shape $\lambda$ if and only if it is of the form $u_l u_{l-1} \cdots u_1$, where:
\begin{enumerate}
\item each word $u_i$ is weakly increasing,
\item the length of $u_i$ is $\lambda_i$, for $1 \le i \le l$, and
\item $u_i$ is a weakly increasing subword of maximal length in $u_l u_{l-1} \cdots u_i$, for $1 \le i \le l-1$.
\end{enumerate}
In this case the $u_i$ are precisely the rows of $T$.
\end{remark}

Shifted Young tableaux and SSDT share many properties, and many theorems about shifted Young tableaux can be proved more easily in the language of SSDT. The first important property they share is that they are in bijection.

\begin{theorem}\label{bijection}
Let $\mathcal{Y}(\lambda)$ be the set of shifted Young tableaux of shape $\lambda$. Let $\mathcal{D}(\lambda)$ be the set of SSDT of shape $\lambda$. The map
\[
\begin{array}{cccc}
\Phi:    & \mathcal{D}(\lambda)    & \rightarrow    & \mathcal{Y}(\lambda) \\
    & R                    & \mapsto        & \pmix (\word (R))
\end{array}
\]
is a bijection. Furthermore, $\word(R) = \mread(\Phi(R))$, i.e., $\Phi$ is a word preserving bijection.
\end{theorem}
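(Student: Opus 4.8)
The plan is to show that $\Phi$ is a well-defined bijection between $\mathcal{D}(\lambda)$ and $\mathcal{Y}(\lambda)$ that preserves the reading word, and the cleanest route is to exhibit an explicit two-sided inverse. First I would check that $\Phi$ is well defined: by the discussion following Definition \ref{ssdt}, if $R \in \mathcal{D}(\lambda)$ then $\word(R)$ is a shifted tableau word of shape $\lambda$, so by Theorem \ref{tableauwordtheorem} (and the construction of the mixed reading word) $\pmix(\word(R))$ is indeed a shifted Young tableau of shape $\lambda$; hence $\Phi(R) \in \mathcal{Y}(\lambda)$. The word-preserving property $\word(R) = \mread(\Phi(R))$ should then be essentially immediate: by Theorem \ref{tableauwordtheorem} the word $\word(R) = u_l \cdots u_1$ is a shifted tableau word, which by definition means it equals $\mread(T)$ for a unique shifted Young tableau $T$; but $\pmix$ of a shifted tableau word recovers exactly that $T$, so $T = \pmix(\word(R)) = \Phi(R)$, and therefore $\word(R) = \mread(T) = \mread(\Phi(R))$.

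The core of the argument is constructing the inverse map $\Psi : \mathcal{Y}(\lambda) \to \mathcal{D}(\lambda)$. Given $T \in \mathcal{Y}(\lambda)$, form $\mread(T)$, which by Theorem \ref{tableauwordtheorem} decomposes canonically as a concatenation $u_l u_{l-1} \cdots u_1$ of hook words with $|u_i| = \lambda_i$ satisfying conditions (1) and (2); then let $\Psi(T)$ be the filling of $\lambda$ whose $i$-th row is $u_i$. This is an SSDT precisely by conditions (1)–(2), so $\Psi(T) \in \mathcal{D}(\lambda)$. The key point here — and the main obstacle — is that the decomposition of a shifted tableau word into hook segments is \emph{canonical}, i.e.\ the segments $u_i$ are uniquely determined by the word. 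I would need to argue this from Theorem \ref{tableauwordtheorem}: since the shape of $\pmix(u_l \cdots u_{i+1} u_i)$ is forced to be $(\lambda_1, \ldots, \lambda_i)$ and in particular has exactly $i$ rows, the cut points of the decomposition are pinned down by the requirement that each suffix $u_l \cdots u_i$ insert to a tableau of prescribed shape (equivalently, by the greedy maximal-hook condition (2) read from the right). I should also confirm the $\lambda_i$ are recoverable: they are the row lengths of $\sh(\pmix(\mread(T))) = \sh(T) = \lambda$, which is given.

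With $\Phi$ and $\Psi$ both well defined, I would verify they are mutually inverse. For $\Phi \circ \Psi = \mathrm{id}_{\mathcal{Y}(\lambda)}$: starting from $T$, $\Psi(T)$ has reading word $\mread(T)$ (its rows concatenated in the bottom-up order are exactly the segments of the canonical decomposition), so $\Phi(\Psi(T)) = \pmix(\mread(T)) = T$ since $\mread(T)$ is a representative of the shifted plactic class $[T]$. For $\Psi \circ \Phi = \mathrm{id}_{\mathcal{D}(\lambda)}$: given $R$ with rows $u_l, \ldots, u_1$, we have $\Phi(R) = \pmix(\word(R))$ and $\mread(\Phi(R)) = \word(R) = u_l \cdots u_1$ by the word-preserving property already established; since the hook decomposition of a shifted tableau word is unique, $\Psi(\Phi(R))$ has $i$-th row $u_i$, i.e.\ equals $R$. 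This closes the loop. The only genuinely delicate ingredient is the uniqueness/canonicity of the hook decomposition, which I expect to extract directly from the shape statement in Theorem \ref{tableauwordtheorem} together with Proposition \ref{ssdtproposition}; everything else is bookkeeping with definitions already in place.
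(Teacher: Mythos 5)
Your argument is circular relative to the paper's logical structure. Every nontrivial step in your proposal is delegated to Theorem \ref{tableauwordtheorem} (or to the sentence after Definition \ref{ssdt} asserting that $R$ is an SSDT iff $\word(R)$ is a shifted tableau word): you use it to show $\Phi$ is well defined, to get $\word(R) = \mread(\Phi(R))$, and to get the canonical hook decomposition of $\mread(T)$ that defines your inverse $\Psi$. But in the paper that theorem (and the remark you cite) is \emph{deduced from} Theorem \ref{bijection}: its proof begins ``By Theorem \ref{bijection}, $w$ is a shifted tableau word if and only if $w = \word(R)$ for some SSDT $R$.'' So the statement you lean on is exactly the content being proved, just rephrased. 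The genuinely hard point, which your proposal never touches, is why $\word(R)$ lies in the shifted plactic class of a tableau of the \emph{same} shape as $R$ and is moreover the canonical representative --- equivalently, why $\qmix(\word(R))$ is the special recording tableau of shape $\sh(R)$. Nothing in the definitions forces this; it is where the combinatorial work lives.

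The paper supplies that work through SK insertion, which your proposal avoids entirely. Its inverse map is $\Psi(T) = \psk(\mread(T))$, and the proof rests on: Theorem \ref{skisanssdt} and Theorem \ref{wordskequivalent} (every SSDT arises as an SK insertion tableau, $\psk(\word(\psk(w))) = \psk(w)$, and $\qsk(\word(\psk(w)))$ is the special recording tableau of the right shape) together with Proposition \ref{samerecordingtableau} ($\qsk = \qmix$). These give both that $\Phi(R) = \pmix(\word(R))$ has shape $\sh(R)$ and that $\word(R) = \mread(\Phi(R))$, after which $\Phi\circ\Psi = \mathrm{id}$ and $\Psi\circ\Phi = \mathrm{id}$ are the short computations you describe. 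To repair your proposal you would either need to import these SK results explicitly in place of Theorem \ref{tableauwordtheorem}, or give an independent proof of Theorem \ref{tableauwordtheorem} --- which the paper does not provide and which would amount to redoing the same insertion-theoretic analysis. (Your observation that the cut points of the hook decomposition are pinned down by the row lengths of $\sh(\pmix(w))$ is fine, but it is downstream of the missing step.)
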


For a more informal definition of $\Phi$, see Remark \ref{simplebijection}

As an example, the image under $\Phi$ of the SSDT in Example \ref{ssdtexample} is the shifted Young tableau of Example \ref{shiftedtableauexample}.

A special case of this bijection is for tableaux with only one row. The image of a shifted Young tableau of one row is the SSDT formed by reading the primed entries from right to left, and then the unprimed entries from left to right. For example,
\[
\ycenter \young(1{\twop}22{\threep}45{\sixp}) \stackrel{\Phi}{\mapsto} \young(63212245).
\]

In order to find the inverse of $\Phi$ directly, we define a semistandard version of Kra\'skiewicz insertion \cite{Kr}.

\begin{definition}[Semistandard Kra\'skiewicz (SK) insertion]\label{skinsertion}
Given a hook  word $u = y_1 \cdots y_k \cdots y_s$, where $u \downarrow = y_1 \cdots y_k$ and $u \uparrow = y_{k+1} \cdots y_s$, and a letter $x$, the insertion of $x$ into $u$ is the word $ux$ if $ux$ is a hook word, or the word $u'$ with an element $y$ that gets bumped out, as follows:
\begin{enumerate}
\item let $y_j$ be the leftmost element in $u \uparrow$ which is strictly greater than $x$;
\item replace $y_j$ by $x$;
\item let $y_i$ be the rightmost element in $u \downarrow$ which is less than or equal to $y_j$;
\item replace $y_j$ by $y_i$, to obtain $u'$, bumping $y = y_j$ out of $x$.
\end{enumerate}
To insert a letter $x$ into an SSDT $T$ with rows $u_1, u_2, \ldots, u_l$, one first inserts~$x = x_1$ into the top row~$u_1$. If an element~$x_2$ gets bumped, it will get inserted into the second row~$u_2$, and so on. The process terminates when an element~$x_i$ gets placed at the end of row~$u_i$.

The \emph{SK insertion tableau} of the word $w = w_1 \cdots w_n$, denoted $\psk(w)$, is obtained by starting with an empty shape and inserting the letters $w_1, \ldots, w_n$ from left to right, forming an SSDT.

The \emph{SK recording tableau} of $w$, denoted $\qsk(w)$, is the standard shifted Young tableau that records the order the elements have been inserted into $\psk(w)$. In other words, the shapes of $\psk(w_1 \cdots w_{i-1})$ and $\psk(w_1 \cdots w_i)$ differ by one box; $\qsk(w)$ has a letter $i$ on that box.
\end{definition}

\begin{example}\label{kinsertionexamplemaybe}
The following are the steps for inserting $3$ into the tableau
\[
\ycenter
\young(6542114,:63215,::522).
\]
In every step, $u \leftarrow a \seq b \leftarrow u' $ denotes the insertion of the letter $a$ into a row $u$, thus obtaining a new row $u'$, and a letter $b$ that gets bumped out and inserted into the next row. The horizontal bar separates $u \uparrow$ and $u \downarrow$.
\begin{eqnarray*}
65421 \, | \, 14 \leftarrow 3        & \seq & 4 \leftarrow 65\mathbf{4}21 \, | 1\mathbf{3}, \\
6321 | 5 \leftarrow 4            & \seq & 3 \leftarrow 6\mathbf{5}21 | \mathbf{4}, \\
52 | 2 \leftarrow 3            & \seq & 52 | 2 \mathbf{3},
\end{eqnarray*}
and the resulting tableau is
\[
\ycenter \young(6542113,:65214,::5223).
\]
\end{example}

\begin{example}\label{kinsertionexample1}
The word $w = 3415961254$ has the following SK insertion and recording tableau
\[
\ycenter
\psk(w) = \young(96524,:511,::34) \qquad \qsk(w) = \young(12459,:368,::7{\ten}).
\]
\end{example}

In Section \ref{proofssection} we define the \emph{standardization} of a word, of a shifted Young tableau, or of an SSDT, based on techniques introduced by Sagan \cite{Sa} and Haiman \cite{Ha}. Standardization enables us to translate results on permutations to results on words with repeated letters. We prove (Lemmas \ref{stanword}, \ref{stansdt} and \ref{stansyt}) that standardization commutes with shifted plactic equivalence, SK insertion, and mixed insertion. Thus, several of our theorems can be viewed as semistandard counterparts of theorems by Kra\'skiewicz \cite{Kr}, Lam \cite{TKLthesis} \cite{TKL}, and Sagan \cite{Sa}. Specifically, Theorem \ref{skisanssdt} is the semistandard extension of \cite[Theorem 5.2]{Kr}, while Theorem \ref{wordskequivalent}, Proposition \ref{samerecordingtableau}, and Theorem \ref{jdttheorem} correspond to \cite[Lemma 3.5, Corollary 3.6, Lemma 4.8]{TKL}, \cite[Theorem 3.34]{TKLthesis}, and \cite[Theorem 3.25]{TKLthesis}, respectively.

\begin{theorem}\label{skisanssdt}
SK insertion is a bijection between words in the alphabet $X$ and pairs of tableaux $(P,Q)$, where $P$ is an SSDT and $Q$ is a standard shifted Young tableau of the same shape as $P$.
\end{theorem}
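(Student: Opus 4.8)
The plan is to prove this by a standardization argument reducing to the standard case, which is Kra\'skiewicz's original theorem, together with a direct check that SK insertion is well defined and invertible as a semistandard algorithm. First I would verify that SK insertion is a well-defined map, i.e., that $\psk(w)$ is genuinely an SSDT: each row-insertion step either appends a letter to a hook word (keeping it a hook word) or performs the bump described in Definition~\ref{skinsertion}, and I must check (a) that the modified row $u'$ is still a hook word, (b) that the bumped letter $y$ strictly decreases down the sequence of rows so that the process terminates, and (c) that condition~(2) of Definition~\ref{ssdt} — equivalently, by Proposition~\ref{ssdtproposition}, the two-consecutive-rows condition $(2')$ — is preserved after each insertion. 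This is the semistandard analog of \cite[Theorem~5.2]{Kr} (labeled Theorem~\ref{skisanssdt} here for the insertion-tableau half), and the cleanest route is to invoke the standardization lemmas: since standardization commutes with SK insertion (Lemma~\ref{stansdt}) and $\stan(w)$ is a permutation, the fact that $\psk(\stan(w))$ is a standard decomposition tableau follows from Kra\'skiewicz, and then unstandardizing recovers that $\psk(w)$ is an SSDT.

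Next I would establish injectivity and surjectivity together by constructing an explicit inverse. Given a pair $(P,Q)$ with $P$ an SSDT and $Q$ a standard shifted Young tableau of the same shape, one reads off the entries of $Q$ in decreasing order $n, n-1, \ldots, 1$; at step $i$, the box of $Q$ containing $i$ is a corner of the current shape, and I run the reverse SK row-bumping: remove the letter in the corner box of $P$, reverse-bump it up through the rows according to the inverse of the four-step rule in Definition~\ref{skinsertion} (at each row, the reverse bump replaces the element that was $y_i$ by the largest eligible element of the increasing part, etc.), and the letter finally ejected from the top row is $w_i$. The key point is that the reverse insertion at each row is uniquely determined — this is where hook words matter, because the decreasing/increasing split of a hook word is unique unless the word is empty or a single letter, and the bumping rule is set up so the inverse step can locate exactly which element entered. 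I would show forward and reverse insertion are mutually inverse at the level of a single row-insertion (a finite case analysis: the appended-letter case versus the bumping case, and within bumping, the subcases governing where $y_j$ and $y_i$ sit), and then the full correspondence follows by induction on word length.

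Again, the slick alternative is to bypass the row-level bijectivity check by standardization: SK insertion on permutations is bijective onto (standard decomposition tableau, standard shifted Young tableau) pairs by Kra\'skiewicz, standardization $\stan$ is a bijection on the relevant sets, and Lemma~\ref{stansdt} gives $\psk(\stan(w)) = \stan(\psk(w))$ and compatibility of recording tableaux; since every SSDT of shape $\lambda$ arises as $\unstan$ of a standard decomposition tableau of shape $\lambda$ with a prescribed content vector, one transports the permutation-level bijection through $\stan/\unstan$. The main obstacle, in either approach, is the reverse-insertion well-definedness at the row level: one must argue that from $u'$ and nothing else one can recover both the inserted letter $x$ and the pair of positions that were touched, and that the reconstructed row is again a hook word — in particular handling the boundary cases where the decreasing part has length one, or where $x$ would be appended rather than bump, or where the bumped element equals an existing entry (the semistandard repeated-letter situation). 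Standardization is precisely the tool that tames these degeneracies, so I expect to lean on Lemmas~\ref{stanword} and~\ref{stansdt} to reduce the delicate cases to the injective permutation case already handled by Kra\'skiewicz, and only do the row-level case analysis to the extent needed to confirm compatibility with unstandardization.
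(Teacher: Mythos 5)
Your proposal matches the paper's (largely implicit) argument: the paper offers no separate proof of this theorem, instead deducing it from Kra\'skiewicz's bijection for permutations via the standardization machinery (Lemma~\ref{stansdt} together with the lemma that $R$ is an SSDT iff $\stan(R)$ is an SDT), which is exactly the route you settle on. Your additional row-level analysis of reverse bumping is not needed beyond what Kra\'skiewicz already provides, but it does no harm.
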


\begin{theorem}\label{wordskequivalent}
Two words are shifted plactic equivalent if and only if they have the same SK insertion tableau. In particular, for a word $w$, $w \seq \word(\psk(w))$ (or equivalently, $\psk(\word(\psk(w))) = \psk(w)$). Furthermore, $\qsk(\word(\psk(w)))$ is the special recording tableau of the same shape as $\psk(w)$.
\end{theorem}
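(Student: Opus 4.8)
\textbf{Proof proposal for Theorem \ref{wordskequivalent}.}

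The plan is to reduce everything to the already-announced comparison between SK insertion and mixed insertion, and then to leverage Theorem \ref{monoidtheorem} (the shifted plactic relations) together with Theorem \ref{skisanssdt}. First I would recall (or cite from the forthcoming Section \ref{proofssection}) the key comparison result that $\qsk(w) = \qmix(w)$ for every word $w$ — this is precisely Theorem \ref{samerecordingtableau} in the outline — and the word-preserving bijection $\Phi$ of Theorem \ref{bijection}, which satisfies $\word(R) = \mread(\Phi(R))$ and $\Phi(R) = \pmix(\word(R))$. Combining these gives a dictionary: $\psk(w)$ and $\pmix(w)$ carry exactly the same information, since $\Phi(\psk(w)) = \pmix(\word(\psk(w)))$, and one wants to show this equals $\pmix(w)$.

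The heart of the argument is the claim $w \seq \word(\psk(w))$, i.e.\ a word is shifted plactic equivalent to the reading word of its SK insertion tableau. I would prove this by the standard insertion-bumping bookkeeping: show that inserting a single letter $x$ into an SSDT $R$ via SK insertion produces a tableau $R'$ whose reading word satisfies $\word(R) \, x \seq \word(R')$. This is a local check — one only needs to see that the four-step bumping rule in Definition \ref{skinsertion} (locate $y_j$ in $u\uparrow$, swap in $x$, locate $y_i$ in $u\downarrow$, swap in $y_i$, bump $y_j$) changes the reading word by a sequence of moves each of which is an instance of one of the relations (\ref{sk1})--(\ref{sk8}), or is invisible on hook words (recall from Remark \ref{placticremark} that a word is a hook word iff its mixed insertion tableau is a single row, and the relations are exactly the plactic-equivalent non-hook four-letter rearrangements). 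Iterating over the letters $w_1, \dots, w_n$ then yields $w \seq \word(\psk(w))$. Granting this, one direction of the theorem is immediate: if $u \seq v$ then $\pmix(u) = \pmix(v)$ by definition, hence $\word(\psk(u)) \seq u \seq v \seq \word(\psk(v))$, and since each $\word(\psk(\cdot))$ is a shifted tableau word lying in its own shifted plactic class, applying $\pmix$ and then $\Phi^{-1}$ forces $\psk(u) = \psk(v)$. Conversely, if $\psk(u) = \psk(v)$ then $u \seq \word(\psk(u)) = \word(\psk(v)) \seq v$. The parenthetical identities ($w \seq \word(\psk(w))$ and the idempotence $\psk(\word(\psk(w))) = \psk(w)$) are then restatements of what has been shown, using that $\word(\psk(w))$ is itself a shifted tableau word so that $\psk$ fixes it.

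For the final sentence, that $\qsk(\word(\psk(w)))$ is the special recording tableau of the shape of $\psk(w)$: here I would invoke the identification $\qsk = \qmix$ and Definition \ref{mreaddef}. The word $\word(\psk(w)) = \mread(\Phi(\psk(w)))$ is, by definition of $\mread$, the unique word in its shifted plactic class whose mixed recording tableau is the special recording tableau of its shape; so $\qmix(\word(\psk(w)))$ is that special recording tableau, and $\qsk = \qmix$ finishes it. Alternatively one can argue directly that the row-by-row, bottom-to-top structure of a reading word $u_l \cdots u_1$ of an SSDT forces each block of $\lambda_i$ consecutive insertion steps to build a connected vee, matching the definition of the special recording tableau. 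The main obstacle is the local bumping lemma $\word(R)\,x \seq \word(R')$: one must carefully track how the bumped letter threads down successive rows and verify that every elementary swap stays within the span of the relations (\ref{sk1})--(\ref{sk8}), handling the boundary cases where $u\uparrow$ is empty, where $x$ is appended without bumping, or where the bumped letter lands at a row end. Everything else is formal, given Theorems \ref{monoidtheorem}, \ref{skisanssdt}, \ref{bijection}, and \ref{samerecordingtableau}.
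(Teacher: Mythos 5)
Your strategy is genuinely different from the paper's, and it is worth saying how. The paper never verifies the bumping combinatorics of SK insertion directly: it reduces everything to the case of permutations via the standardization lemmas (Lemmas \ref{stanword}, \ref{stansdt}, \ref{stansyt}) and then quotes Kra\'skiewicz's and Lam's results for reduced words in $B_N$, where the shifted plactic relations become the $B$-Coxeter--Knuth relations; Theorem \ref{wordskequivalent} is obtained as the semistandard counterpart of \cite[Lemma 3.5, Corollary 3.6]{TKL}. You instead propose the classical Knuth-style route: a local bumping lemma $\word(R)\,x \seq \word(R')$, bootstrapped to $w \seq \word(\psk(w))$. That route is legitimate in principle and would be more self-contained, but as written it has two real gaps.

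First, the bumping lemma is where all the content lives, and you do not prove it; you call it ``a local check'' and list the boundary cases, but exhibiting each SK bumping step as a composition of (\ref{sk1})--(\ref{sk8}) is precisely the hard part, and it is harder than in the unshifted case because the rows are hook words and the rule of Definition \ref{skinsertion} moves a letter from the increasing part into the decreasing part before ejecting it. Second, your forward direction ($u \seq v \Rightarrow \psk(u)=\psk(v)$) is circular: you deduce $\Phi(\psk(u)) = \Phi(\psk(v))$ and apply $\Phi^{-1}$, but injectivity of $\Phi$ is Theorem \ref{bijection}, which the paper proves \emph{from} Theorem \ref{wordskequivalent}, and you give no independent argument for it --- to know that $\word(R)\seq\word(R')$ forces $R=R'$ one essentially already needs that $\psk$ is constant on shifted plactic classes and fixes reading words of SSDTs. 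The non-circular fix is to check directly that each of the eight relations, applied inside a word, leaves $\psk$ unchanged (or to standardize and quote Lam, as the paper does); that verification is missing. The same circularity touches your first argument for the ``furthermore'' clause, which invokes $\word(R)=\mread(\Phi(R))$ from Theorem \ref{bijection}; your alternative argument via connected vees (essentially Lemma \ref{uni}) is the right idea but again presupposes $\psk(\word(R))=R$.
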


\begin{proposition}\label{samerecordingtableau}
The recording tableau of a word is the same under mixed insertion and SK insertion. Namely, $\qmix(w) = \qsk(w)$ for any word $w$ in the alphabet $X$.
\end{proposition}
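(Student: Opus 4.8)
\textbf{Proof proposal for Proposition \ref{samerecordingtableau}.}

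The plan is to reduce the semistandard statement to the standard one via standardization, then invoke the already-established identification of recording tableaux in the standard setting. First I would recall that for a word $w$ in the alphabet $X$, the standardization $\stan(w)$ is a permutation, and (by the lemmas on standardization promised for Section \ref{proofssection}) standardization commutes with both mixed insertion and SK insertion, in the sense that the recording tableau is left \emph{unchanged}: $\qmix(\stan(w)) = \qmix(w)$ and $\qsk(\stan(w)) = \qsk(w)$. This is because standardization only relabels letters in a way that refines the existing order (reading equal letters left to right, or using the $X'$-order conventions), so the \emph{order} in which boxes are added to the growing shape — which is exactly what the recording tableau records — is identical for $w$ and $\stan(w)$. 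Hence it suffices to prove $\qmix(\pi) = \qsk(\pi)$ for every permutation $\pi$.

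For permutations, this is precisely the content of the correspondence between Haiman's mixed insertion and Kra\'skiewicz insertion. Concretely, I would argue as follows. By Theorem \ref{wordskequivalent} (applied in the standard case, i.e. Kra\'skiewicz's original theorem), two permutations are shifted plactic equivalent iff they have the same SK insertion tableau; by Theorem \ref{monoidtheorem} (Haiman's theorem in the standard case), the same holds with mixed insertion. Thus the map $\psk(\pi) \mapsto \pmix(\pi)$ is a well-defined bijection on shifted plactic classes, and it preserves shape (since both $\psk$ and $\pmix$ of a length-$n$ word have shape a strict partition of $n$, and one checks the shapes agree — e.g. via Theorem \ref{bijection}, whose map $\Phi$ sends $\psk(\pi)$ to $\pmix(\pi)$ and is shape-preserving). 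Now fix a shifted plactic class; its elements are indexed on the SK side by their SK recording tableaux (a standard shifted tableau of the class's shape is realized by exactly one word in the class, since $\qsk$ together with $\psk$ reconstructs $w$), and likewise on the mixed side by their mixed recording tableaux. I need to see that for a single word $\pi$, these two standard tableaux coincide, not merely that the two indexing sets have the same size.

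The cleanest route to the box-by-box agreement is to peel off the last letter. Both the mixed insertion and the SK insertion of $w_1\cdots w_n$ are obtained from those of $w_1\cdots w_{n-1}$ by one more insertion step, which adds exactly one box; the recording tableau gets the entry $n$ in that box. So it suffices to show the \emph{position} of the newly added box is the same for mixed insertion and for SK insertion, and then induct on $n$. I would establish this positional statement by tracking the bumping route: in mixed insertion the new box sits at the end of some row or column, and in SK insertion at the end of some row, and one shows — comparing the mixed-insertion bumping path with the SK bumping path, using the defining rules and the hook-word structure, much as in the proof that $\Phi$ is well defined — that these two endpoints agree. This last comparison of bumping paths is the main obstacle: it is the genuinely combinatorial heart of the proof and requires a careful simultaneous induction, presumably packaged together with the proof of Theorem \ref{bijection} (the word-preserving bijection $\Phi$) and Theorem \ref{wordskequivalent}, so that the recording-tableau identity falls out as a corollary of the strengthened inductive hypothesis rather than being proved in isolation.
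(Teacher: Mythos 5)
Your proposal takes essentially the same route as the paper: the paper's entire argument for Proposition \ref{samerecordingtableau} is the standardization reduction ($\qmix(\stan(w)) = \qmix(w)$ and $\qsk(\stan(w)) = \qsk(w)$ from Lemmas \ref{stansdt} and \ref{stansyt}), after which the permutation case is simply cited as Lam's result \cite[Theorem 3.34]{TKLthesis} rather than reproved. Your first paragraph is exactly this argument, and while your sketch of the permutation case (induction on the last letter, comparing bumping endpoints) is admittedly incomplete, you have correctly identified that this is where all the genuine combinatorial work lives --- work the paper outsources to the literature.
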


\begin{remark}\label{simplebijection}
A more informal way to view $\Phi$ is as follows. Let $w$ be a word in the alphabet $X$. Then $\Phi$ sends $\pmix(w)$ to $\psk(w)$, and $\Psi$ does the opposite.
\end{remark}

\begin{proof}[Proof of Theorem \ref{bijection}]
We will prove that the inverse map is given by
\[
\begin{array}{cccc}
\Psi:    & \mathcal{Y}(\lambda)    & \rightarrow    & \mathcal{D}(\lambda) \\
    & T                    & \mapsto        & \psk (\mread (T)).
\end{array}
\]
Let $T \in \mathcal{Y}(\lambda)$. Then by Theorem \ref{wordskequivalent}, $\qsk(\mread(T))$ is the special recording tableau of shape~$\lambda$. Therefore, $\Psi(T) = \psk(\mread(T))$ has shape $\lambda$, i.e., $\Psi(T) \in \mathcal{D}(\lambda)$. Similarly, if $R \in \mathcal{D}(\lambda)$, then $\Phi(R) \in \mathcal{Y}(\lambda)$.

Now, note that for $T \in \mathcal{Y}(\lambda)$,
\begin{eqnarray*}
\Phi ( \Psi (T) )    & = & \pmix ( \word (\psk (\mread(T) ) ) ) \\
            & = & \pmix (\mread(T)) \\
            & = & T.
\end{eqnarray*}
And similarly, for $R \in \mathcal{D}(\lambda)$, $\Psi( \Phi(R) ) = R$. Therefore, $\Phi$ is a bijection.
\end{proof}

\begin{proof}[Proof of Theorem \ref{tableauwordtheorem}]
By Theorem \ref{bijection}, $w$ is a shifted tableau word if and only if $w = \word(R)$ for some SSDT $R$. Therefore, (1) follows from the definition of an SSDT.

(2) follows from the observation that if $u_l, \ldots, u_1$ are rows of an SSDT, then so are $u_l, \ldots, u_i$ for all $1 \le i \le l$.
\end{proof}

Sagan \cite{Sa} and Worley \cite{Wo} have introduced the \emph{Sagan-Worley insertion}, a correspondence between words and pairs of shifted tableaux. For a word $w$, we denote the \emph{Sagan-Worley insertion and recording tableaux} by $\pw(w)$ and $\qw(w)$, respectively.

Both mixed insertion and Sagan-Worley insertion can be generalized to \emph{biwords}, namely two-rowed arrays in which the columns are arranged lexicographically, with priority given to the element in the top row. In this setting, one inserts the elements from the bottom row left to right, while recording the corresponding elements from the top row. The ordinary mixed insertion is obtained if the top row has the numbers $1, 2, 3, \ldots$. Thus, one can identify a word $u$ with the two-rowed array where $u$ is in the bottom row, and the numbers $1,2,3, \ldots$ form in the top row. The inverse of $u$, denoted $u^{-1}$ is the biword obtained by switching the two rows in $u$, and organizing the columns lexicographically with priority given to the element on the top row. Haiman has proved that mixed insertion is dual to Sagan-Worley insertion in the following sense: $\pmix(u) = \qw(u^{-1})$ and $\qmix(u) = \qw(u^{-1})$. This was extended to biwords by Fomin \cite[p. 291]{Fo}.

Thus, an alternative way to define shifted plactic equivalence (hence the shifted plactic monoid) is as follows: Two words $u$ and $v$ are shifted plactic equivalent if and only if $\qw(u^{-1}) = \qw(v^{-1})$.

In light of this, we obtain the following theorem as a special case of \cite[Theorem 10.2]{Sa}, and also as a semistandard extension of \cite[Lemma 4.8]{Kr}. 

\begin{theorem}\label{shiftedgreenestheorem}
The length of the longest hook subword of a word $w$ is equal to the length of the top row of $\pmix(w)$ (or equivalently of $\psk(w)$).
\end{theorem}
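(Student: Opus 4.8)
The plan is to deduce the statement from the known Greene-type invariant for Sagan--Worley insertion, transported through the mixed-insertion/Sagan--Worley duality. First, by Proposition \ref{samerecordingtableau} (together with Theorems \ref{bijection} and \ref{skisanssdt}), the tableaux $\pmix(w)$ and $\psk(w)$ have the same shape, so it suffices to treat $\pmix(w)$. Next, I would use the duality recalled just before the statement: mixed insertion is dual to Sagan--Worley insertion, so the shape of $\pmix(w)$ coincides with the shape of the Sagan--Worley tableaux of the inverse biword $w^{-1}$; since Sagan--Worley insertion sends a biword to a pair of shifted tableaux of the \emph{same} shape, the length $\lambda_1$ of the top row of $\pmix(w)$ equals the length of the top row of $\pw(w^{-1})$.

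The core step is to invoke Sagan's Greene-type theorem \cite[Theorem 10.2]{Sa}, which expresses the length of the top row of the Sagan--Worley insertion tableau of a (bi)word as the maximal length of a subword of a prescribed ``hook'' (unimodal) type. The remaining work is bookkeeping: under the role switch $u\mapsto u^{-1}$ on biwords, a subsequence $w_{i_1},\dots,w_{i_l}$ of $w$ with $i_1<\dots<i_l$ and $w_{i_1}>\dots>w_{i_k}\le w_{i_{k+1}}\le\dots\le w_{i_l}$ must be matched with exactly the subword of $w^{-1}$ to which Sagan's statistic applies, with the two lengths agreeing. Combining this with the previous paragraph gives $\lambda_1=$ (length of the longest hook subword of $w$), and hence the same equality for $\psk(w)$.

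The step I expect to be the main obstacle is precisely this last translation: one must check carefully, against Sagan's conventions for the inverse of a biword, that the ``strictly decreasing then weakly increasing'' shape of a hook subword of $w$ is the exact image of his statistic on $w^{-1}$, and that the semistandard (repeated-letter) case introduces no discrepancy. An alternative route that avoids the inversion altogether is to first replace $w$ by its standardization: standardization commutes with mixed insertion (Lemma \ref{stansyt}), and with the convention used here it preserves the length of the longest hook subword --- a weakly increasing run lifts to a strictly increasing run, and, conversely, no strictly decreasing subword of the standardization can use two copies of a single letter of $w$, since within each letter-batch the standardization is increasing in position order. This reduces the claim to permutations, where it is Kra\'skiewicz \cite[Lemma 4.8]{Kr} (equivalently, the standard case of \cite[Theorem 10.2]{Sa}), and one then lifts back. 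Either way, the new content of the argument lies in the combinatorial identification of the statistic, not in the insertion algorithms themselves.
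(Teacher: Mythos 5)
Your proposal is correct and follows essentially the same route as the paper, which derives the statement from Haiman's duality $\pmix(u)=\qw(u^{-1})$ together with Sagan's Greene-type theorem \cite[Theorem 10.2]{Sa}, noting also that it is a semistandard extension of \cite[Lemma 4.8]{Kr}. The paper in fact gives no more detail than this, so your additional care about the biword bookkeeping and the standardization reduction only fills in steps the paper leaves implicit.
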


Note that an ordinary shape $\lambda = (\lambda_1, \lambda_2, \ldots, \lambda_l)$ can be considered a skew shifted shape $(\lambda + \delta) / \delta$, where $\delta = (l, l-1, \ldots, 1)$. The following result, which can be viewed as the semistandard version of \cite[Theorem 3.25]{TKLthesis}, relates the mixed (or SK) recording tableau of a word to its Robinson-Schensted-Knuth recording tableau.

\begin{theorem}\label{jdttheorem}
Let $w$ be a word in the alphabet $X$. Then the tableau $\qmix(w)$ (which is the same as $\qsk(w)$) can be obtained by treating $\qr(w)$ as a skew shifted Young tableau and applying shifted jeu de taquin slides to it to get a standard shifted Young tableau.
\end{theorem}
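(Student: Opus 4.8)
The plan is to prove Theorem~\ref{jdttheorem} by reducing to the standard (permutation) case via standardization, and then invoking the known correspondence result of T.~K.~Lam \cite[Theorem 3.25]{TKLthesis} together with the duality between mixed insertion and Sagan--Worley insertion. First I would recall that by Proposition~\ref{samerecordingtableau} we have $\qmix(w) = \qsk(w)$, so it suffices to work with either one; I will work with $\qmix$. Next, I would use the standardization machinery (Lemmas~\ref{stanword}, \ref{stansyt} and the fact that standardization commutes with mixed insertion and with RSK) to reduce to the case where $w$ is a permutation: standardization replaces $w$ by a permutation $\stan(w)$ in such a way that $\qmix(\stan(w)) = \qmix(w)$ as \emph{standard} tableaux (up to the canonical relabeling) and $\qr(\stan(w)) = \qr(w)$ likewise, and crucially it is compatible with the jeu de taquin slides in question because jeu de taquin on \emph{standard} skew shifted tableaux is performed exactly as in the ordinary case, independent of repeated letters. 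So once the statement is known for permutations, applying standardization and then unstandardizing the recording tableaux gives the general case.

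For the permutation case, the key is Haiman's duality: $\qmix(u) = \qw(u^{-1})$, i.e.\ the mixed recording tableau of $u$ is the Sagan--Worley \emph{recording} tableau of the inverse biword, which in turn (since the top row of $u^{-1}$ is the word $u$ itself read as positions) equals the Sagan--Worley \emph{insertion} tableau $\pw(u)$ up to the standard transpose-type symmetry. Similarly, $\qr(u)$ is the RSK recording tableau, which by the classical duality is the RSK insertion tableau of $u^{-1}$. Now the content of \cite[Theorem 3.25]{TKLthesis} (phrased there in the language of decomposition tableaux and Kra\'skiewicz insertion, which by Theorems~\ref{skisanssdt}, \ref{wordskequivalent} and Proposition~\ref{samerecordingtableau} is interchangeable with mixed insertion here) is precisely that the shifted recording tableau is obtained from the ordinary RSK recording tableau by viewing the latter as a skew shifted tableau of shape $(\lambda+\delta)/\delta$ with $\delta=(l,l-1,\ldots,1)$ and rectifying it via shifted jeu de taquin. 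Thus in the permutation case the theorem is a direct translation of Lam's result through the dictionary already established in this paper. I would spell out this dictionary carefully: identify $\qsk$ with Lam's recording tableau under Kra\'skiewicz insertion, identify $\qr$ with the RSK recording tableau, and note that both sides of the claimed equality are unchanged under the translations.

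The main obstacle I anticipate is bookkeeping the compatibility of standardization with shifted jeu de taquin slides, and making sure the embedding $\lambda \hookrightarrow (\lambda+\delta)/\delta$ interacts correctly with standardization — one must check that the skew shifted tableau $\qr(w)$, standardized and then slid, yields the standardization of $\qr(w)$ slid, and that this in turn yields the standardization of $\qmix(w)$. Since shifted jeu de taquin on standard skew tableaux is literally the ordinary procedure and standardization of a standard tableau is the identity, this compatibility should be essentially formal, but it requires the observation (used implicitly throughout the paper) that jeu de taquin slides commute with the unstandardization map on recording tableaux because the recording tableau only tracks the \emph{order} of box additions, which standardization preserves. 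A secondary point requiring care is the precise form of Lam's theorem: \cite[Theorem 3.25]{TKLthesis} is stated for standard objects, so one must confirm that the intermediate shapes appearing in the rectification match the intermediate shapes of Kra\'skiewicz insertion, which is exactly what Theorem~\ref{wordskequivalent} and the construction of the special recording tableau guarantee. Once these compatibilities are in hand, the proof is the concatenation: standardize, apply the permutation case (= Lam's theorem translated through Haiman's duality and the SK/mixed dictionary), unstandardize.
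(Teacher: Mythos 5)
Your proposal matches the paper's own (largely implicit) argument: the paper proves this theorem precisely by noting that the standardization lemmas (\ref{stanword}, \ref{stansdt}, \ref{stansyt}) reduce the statement to the permutation case, where it becomes exactly \cite[Theorem 3.25]{TKLthesis} under the identification of $\qsk$ with the Kra\'skiewicz recording tableau. Your extra detour through Haiman's mixed/Sagan--Worley duality is harmless but not needed, since Proposition~\ref{samerecordingtableau} already lets you work with $\qsk$ directly.
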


\section{Proofs}\label{proofssection}

\begin{definition}
A word of length $n$ is called a \emph{permutation} if it contains each of the letters $1,2,\ldots, n$ each exactly once. A shifted Young tableau of content $(1,1,\ldots,1)$ will be called a \emph{Haiman tableau}. In other words, a Haiman tableau is a standard shifted tableau, possibly with some primed off-diagonal entries.

Let $w = w_1 w_2 \ldots w_m$ be a word. Let $(\alpha_1, \ldots, \alpha_m)$ be the \emph{content} of $w$, i.e., $\alpha_i$ letters in $w$ are equal to $i$. The \emph{standardization} of $w$, denoted $\stan(w)$ is the permutation obtained by relabelling the elements labelled~$i$ by $\alpha_1+\cdots + \alpha_{i-1}+1, \ldots, \alpha_1 + \cdots + \alpha_i$, from left to right.

The \emph{standardization} of an SSDT~$R$, denoted $\stan(R)$ is the filling obtained by applying the same procedure used in the standardization of a word, namely, taking the order of the elements from $\word(R)$.  Note that in general, any filling of a shifted shape with letters in $X$ can be standardized, using this method.

The standardization of a shifted Young tableau $T$, denoted $\stan(T)$ is the Haiman tableau of the same shape, obtained as follows: Let $\alpha = (\alpha_1, \ldots, \alpha_m)$ be the content of $T$. For every $i$, the entries labelled $i'$ or $i$ are relabelled $\alpha_1 + \ldots + \alpha_{i-1}+1, \alpha_1 + \ldots + \alpha_{i-1}+2, \ldots, \alpha_1 + \ldots + \alpha_{i-1} + \alpha_i$. One assigns these values in increasing order, starting from the boxes labelled $i'$, from top to bottom, and then moving to the boxes labelled $i$, from left to right. If the old element in a box was primed, so is the new one. This procedure has been suggested by Haiman \cite{Ha}.
\end{definition}

\begin{example}\label{standardizationexample}
Standardizations of a word $w$, a shifted Young tableau $T$, and an SSDT $R$:
\[
w = 23314211, \qquad \stan(w) = 46718523,
\]
\[ \ycenter
R= \young(4211,:313,::2), \qquad
\stan(R)= \young(8523,:617,::4),
\]
\[ \ycenter
T= \young(111{\twop},:2{\threep}4,::3), \qquad
\stan(T)= \young(123{\fourp},:5{\sixp}8,::7).
\]
\end{example}

In his study of decompositions of reduced words in the hyperoctahedral group $B_N$, Kra\'skiewicz \cite{Kr} has introduced \emph{standard decomposition tableaux} (SDT), which are defined in the same way as SSDT, with the extra condition that the reading word must be a reduced word in $B_N$. He has also introduced the \emph{Kra\'skiewicz correspondence}, which assigns to every reduced word in $B_N$, a pair of tableaux consisting of the Kra\'skiewicz insertion tableau (an SDT), and the Kra\'skiewicz recording tableau (a standard Young tableau).

We do not use the full power of Kra\'skiewicz insertion, but we use it for permutations, since a permutation is always a reduced word in $B_N$, for some $N$. (Here the letter $i$ stands for the $i$-th generator of $B_N$ as a Coxeter group.) We abuse notation and call $\psk(w)$ and $\qsk(w)$ the \emph{Kra\'skiewicz insertion} and \emph{recording tableaux}, respectively, and use the term SDT to refer to a semistandard decomposition tableau whose reading word is a permutation. We also note that for permutations, the shifted plactic relations are equivalent to the \emph{B-Coxeter-Knuth relations} in \cite{TKL}, and to the dual equivalence relations given in Corollary 3.2 in \cite{HaD}.

\begin{lemma}\label{stanword}
Two words $u$ and $v$ in the alphabet $X$ are shifted plactic equivalent if and only if they have the same content, and $\stan(u) \seq \stan(v)$.
\end{lemma}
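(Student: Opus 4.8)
The proof should be a ``standardization commutes with the relevant operations'' argument, standard in this area. The key observation is that the eight shifted plactic relations (\ref{sk1})--(\ref{sk8}) are \emph{content-preserving} in the weak sense that the two sides are rearrangements of the same multiset of letters; hence if $u \seq v$ then $u$ and $v$ have the same content, giving one easy direction of the necessity. So the real content is to show that a single application of a shifted plactic relation to a word $u$, producing $v$, corresponds (after standardization) to a sequence of shifted plactic relations taking $\stan(u)$ to $\stan(v)$, and conversely that if $\stan(u) \seq \stan(v)$ (and $u,v$ have the same content) then $u \seq v$.

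\textbf{Key steps, in order.} First I would record the easy facts: (i) if $u \seq v$, inspection of (\ref{sk1})--(\ref{sk8}) shows each relation rewrites a factor into a permutation of itself with the same letters, so $u$ and $v$ have the same content; and (ii) standardization is a bijection between words of content $\alpha$ and permutations, with a well-defined ``unstandardization'' inverse once $\alpha$ is fixed. Second, for the forward direction assume $u \seq v$; it suffices to treat a single relation application, $u = p\,w\,q \to p\,w'\,q = v$ where $w \seq w'$ is one of (\ref{sk1})--(\ref{sk8}) applied to a $4$-letter factor with $a\le b\le c<d$ (etc.). Here one must examine how $\stan$ acts on the $4$-letter window $w$ inside $u$: the four positions of $w$ receive four labels $a' < b' < c' < d'$ in $\stan(u)$, determined by the content of the prefix $p$ and the tie-breaking ``left to right'' convention; crucially, because the inequalities among $a,b,c,d$ in each relation are \emph{non-strict} exactly where standardization can break ties in either order, the relabelled window $w$ becomes, in $\stan(u)$, one of the strict-inequality instances of the \emph{same} relation — or, in the cases where the original relation has a weak inequality ($a\le b$, etc.), possibly a short chain of relations (\ref{sk1})--(\ref{sk8}) together with, if needed, the observation from Remark \ref{placticremark} that any adjacent transposition of a non-hook $4$-letter word is a shifted plactic relation. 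The letters of $p$ and $q$ are relabelled identically on both sides, so $\stan(u) \seq \stan(v)$ follows. Third, the converse: given $u,v$ with the same content $\alpha$ and $\stan(u) \seq \stan(v)$, apply $\unstan_\alpha$; since $\unstan_\alpha$ is the inverse bijection and one checks (again by the same window analysis, run backwards) that unstandardizing a single shifted plactic relation on permutations yields either a shifted plactic relation or the identity on words of content $\alpha$, we get $u \seq v$. Alternatively, and more cleanly, invoke Theorem \ref{wordskequivalent} (shifted plactic equivalence $\iff$ same SK insertion tableau) and defer the converse to Lemma \ref{stansdt}, which asserts $\stan(\psk(w)) = \psk(\stan(w))$: then $u \seq v \iff \psk(u)=\psk(v)$, and applying $\stan$ (a bijection on tableaux of fixed content) this holds iff $\psk(\stan(u)) = \psk(\stan(v))$ iff $\stan(u)\seq\stan(v)$, with the content equality recovered since $\psk$ determines content — this is probably the intended short proof, modulo citing Lemma \ref{stansdt}.

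\textbf{Main obstacle.} The delicate point is the window analysis in the forward direction: verifying that for \emph{each} of the eight relations, every way the standardization tie-breaking can act on a $4$-letter factor with the prescribed weak/strict inequalities produces a word that is still connected, by shifted plactic relations, to the standardization of the other side. This is a finite but genuinely case-heavy check — one must be careful that when a weak inequality like $a=b$ occurs, the two copies get consecutive labels and the resulting $4$-letter permutation is handled by the right relation (or a composition of two), and that no intermediate word is a hook word in a way that would block the move. If one instead routes through Theorem \ref{wordskequivalent} and Lemma \ref{stansdt}, the obstacle is simply shifted onto Lemma \ref{stansdt} (that SK insertion commutes with standardization), whose proof is the real work but is a separate lemma in the paper; so the cleanest exposition here is the short tableau-theoretic argument, leaving the case analysis encapsulated elsewhere.
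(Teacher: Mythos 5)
Your primary approach --- reduce to a single application of one of the relations (\ref{sk1})--(\ref{sk8}) and then do a finite check of how standardization acts on the $4$-letter window --- is exactly what the paper does. The paper's proof enumerates, for each relation, the content-degenerate $4$-letter words standardizing to each side (e.g.\ for (\ref{sk1}), the words $aaba, aacb, abcb, abdc$ standardize to the pattern of $abdc$) and observes that they are paired off by a \emph{single} instance of the \emph{same} relation; so the chains of relations and the ``hook word blocking the move'' worry you raise do not actually arise, and the check is lighter than you fear. Your observation that the window's relative order in $\stan(u)$ is determined by the window alone (the prefix only shifts labels) is the right justification for the reduction, which the paper leaves implicit.

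One genuine problem: the route you call ``probably the intended short proof,'' via Theorem \ref{wordskequivalent} and Lemma \ref{stansdt}, is circular. In this paper $\seq$ in Lemma \ref{stanword} must be read as the congruence generated by (\ref{sk1})--(\ref{sk8}), because the lemma is the first step in the proof of Theorem \ref{monoidtheorem} (which establishes that this congruence coincides with ``same mixed insertion tableau''); Theorem \ref{wordskequivalent} is a downstream consequence of that theorem and of the standardization lemmas, so it cannot be invoked here. Stick with the direct case analysis.
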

\begin{proof}
It is enough to check this for relations (\ref{sk1})--(\ref{sk8}). For example, for (\ref{sk1}), there are four possible type of words that will have $abdc$ as their standardization, with $a < b < c < d$, namely $aaba, aacb, abcb, abdc$. Similarly for $adbc$, we have $abaa, abac, acab, adbc$. These eight words are paired by relation (\ref{sk1}) as follows:
\begin{eqnarray*}
aaba        & \seq    & abaa; \\
aacb        & \seq    & acab; \\
abcb        & \seq     & acbb; \\
abdc        & \seq    & adbc.
\end{eqnarray*}
Checking the other seven relations is routine, and left for the reader.
\end{proof}

\begin{lemma}
Standardization respects the property of being a shifted tableau word, an SSDT, or a semistandard shifted Young tableau. More precisely:
\begin{itemize}
\item a word $w$ is a shifted tableau word if and only if $\stan(w)$ is;
\item a filling of a shifted shape $R$ with letters in $X$ is an SSDT if and only if $\stan(R)$ is an SDT;
\item a filling of a shifted shape $T$ with letters in $X'$ is a semistandard shifted Young tableau if and only if $\stan(T)$ is a Haiman tableau.
\end{itemize}
\end{lemma}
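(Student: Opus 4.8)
The plan is to verify each of the three assertions separately, reducing every statement to a local comparison between a semistandard object and its standardization. In each case standardization does not change the underlying \emph{positions} of the entries, only relabels them; the key point is that relabeling preserves (and reflects) the relevant order relations. I would first record the following general principle, essentially contained in the definition of $\stan$: for a word $w$ and positions $p<q$, we have $w_p<w_q$ (respectively $w_p=w_q$, $w_p>w_q$) in $X$ if and only if $\stan(w)_p<\stan(w)_q$ (respectively $\stan(w)_p<\stan(w)_q$ but they are ``consecutive of equal original value'', $\stan(w)_p>\stan(w)_q$). More usefully: a subword of $w$ is weakly increasing, or strictly decreasing, or a hook word, if and only if the corresponding subword of $\stan(w)$ is. The only subtlety is that \emph{equal} letters of $w$ in left-to-right order become strictly increasing in $\stan(w)$, so weak inequalities in $w$ correspond to strict inequalities in $\stan(w)$ along already-occurring runs; this is exactly the standard bookkeeping of Sagan's and Haiman's standardization, and I would state it as a short preliminary observation rather than belabor it.

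For the first bullet, I would argue: $w$ is a shifted tableau word iff (by Theorem~\ref{tableauwordtheorem}) $w=u_l\cdots u_1$ where each $u_i$ is a hook word and $u_i$ is a hook subword of maximum length in $u_l\cdots u_i$. By the preliminary observation, the factorization $w=u_l\cdots u_1$ has each block a hook word iff the same-positioned factorization $\stan(w)=\stan(w)^{(l)}\cdots\stan(w)^{(1)}$ does (hook-ness is detected by the inequality pattern, which standardization preserves). For the maximality condition (2), I would invoke Theorem~\ref{shiftedgreenestheorem}: the length of the longest hook subword of any word equals the top-row length of its mixed insertion tableau. Combined with Lemma~\ref{stanword} (standardization commutes with shifted plactic equivalence, hence $\pmix(\stan(w))$ has the same shape as $\pmix(w)$ — or more directly, longest-hook-subword length is itself preserved by standardization since hook subwords correspond bijectively), condition (2) holds for $w$ iff it holds for $\stan(w)$. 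So $w$ is a shifted tableau word iff $\stan(w)$ is.

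For the second bullet, by the remark following Definition~\ref{ssdt}, a filling $R$ of shape $\lambda$ is an SSDT iff $\word(R)$ is a shifted tableau word of shape $\lambda$; and $\stan(R)$ is by definition the filling of $\lambda$ whose reading word is $\stan(\word(R))$. Since $R$ and $\stan(R)$ have the same shape, the first bullet immediately gives the equivalence, with the proviso that an SDT is exactly an SSDT whose reading word is a permutation — and $\stan$ of anything is a permutation-word. For the third bullet, I would check directly from the four defining conditions of a semistandard shifted Young tableau: rows and columns weakly increasing, each unprimed $k$ at most once per column, each $k'$ at most once per row, no primed entries on the diagonal. Haiman's standardization assigns the labels for value $i$ by reading primed $i'$'s top-to-bottom first, then unprimed $i$'s left-to-right, keeping primes attached. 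One checks that within a fixed column, the at-most-one-unprimed-$k$ condition is exactly what makes the relabeled column strictly increasing on those entries while respecting primes, and similarly rows; conversely, a violation of any of the four conditions in $T$ produces a violation (a descent, or a repeated diagonal prime) in $\stan(T)$, and a Haiman tableau is precisely a standard shifted tableau allowing off-diagonal primes.

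The main obstacle I anticipate is the third bullet: one must verify that Haiman's somewhat asymmetric relabeling rule (primed entries first, by rows/columns in a prescribed direction) interacts correctly with all four tableau axioms simultaneously, in both directions. The cleanest route is probably to treat each value $i$ in isolation — noting that the set of cells labeled $i$ or $i'$ forms a ``skew shape of height $\le 1$ in each column and width $\le 1$ in each row after removing primes'' type configuration — and check that Haiman's ordering is the unique way to make that block standard-with-primes consistent with its neighbors; this reduces the global check to a purely local one about a single value, which is then routine. The other two bullets I expect to be essentially formal once Theorem~\ref{tableauwordtheorem}, Theorem~\ref{shiftedgreenestheorem}, and the preliminary observation about standardization preserving inequality patterns are in hand.
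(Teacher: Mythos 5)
Your proposal is correct and follows essentially the same route as the paper: the paper's proof also rests on the observation that (sub)words are hook words if and only if their standardizations are — which handles the first two bullets, including the maximality condition via the correspondence of hook subwords — and dismisses the third bullet as a routine check of the defining axioms, which you carry out explicitly. Your extra detail (invoking Theorem~\ref{shiftedgreenestheorem} and the value-by-value analysis of Haiman's relabeling) is a legitimate expansion of what the paper leaves implicit, not a different argument.
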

\begin{proof}
The first two points follow from the observation that a word $w$ is a hook word if and only if its standardization $\stan(w)$ is a hook word. The third one is a routine check of the properties that define a shifted Young tableau.
\end{proof}

\begin{lemma}\label{stansdt}
Let $w$ be a word and $R$ an SSDT. Then,
\begin{eqnarray*}
\psk(\stan(w))    & = &  \stan(\psk(w)); \\
\qsk(\stan(w))    & = & \qsk(w); \\
\word(\stan(R))    & = & \stan(\word(R)).
\end{eqnarray*}
\end{lemma}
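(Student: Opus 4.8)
The statement breaks into three equalities, and my plan is to prove them in the order listed, using the first as the engine for the other two. The third identity, $\word(\stan(R)) = \stan(\word(R))$, is essentially immediate: the standardization of an SSDT is \emph{defined} by relabelling the cells according to the order in which their entries appear in $\word(R)$, and $\word$ just reads the cells in a fixed order (bottom row to top, left to right in each row), so the relabelled reading word is literally $\stan(\word(R))$. I would dispatch this first as a warm-up; it also confirms that $\stan(R)$ is a well-defined filling and, combined with the preceding lemma, that it is an SDT.

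\textbf{The main identity.}
For $\psk(\stan(w)) = \stan(\psk(w))$ I would argue by induction on the length of $w$, tracking a single SK insertion step. Write $w = w' x$. By induction $\psk(\stan(w')) = \stan(\psk(w'))$, so I must show that SK-inserting the (appropriately relabelled) standardized letter into $\stan(\psk(w'))$ produces the standardization of the tableau obtained by SK-inserting $x$ into $\psk(w')$. The crux is a row-level claim: if $u$ is a hook word over $X$ and $x \in X$, then inserting $\stan$-of-$x$ into $\stan(u)$ (with the relabelling dictated by the position of $x$ among equal letters in the row word) bumps a letter lying in the cell that is the $\stan$-image of the cell bumped when $x$ is inserted into $u$, and the resulting row is the standardization of the resulting row. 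This is a finite check that the SK bumping rule — ``leftmost entry of $u\uparrow$ strictly greater than $x$'' and ``rightmost entry of $u\downarrow$ at most the bumped value'' — commutes with standardization, exactly parallel in spirit to the verification in Lemma \ref{stanword} that the relations (\ref{sk1})--(\ref{sk8}) respect standardization. Since ``strictly greater than'' is the strict inequality that standardization is designed to make consistent with positions, the leftmost-greater and rightmost-$\le$ choices are preserved; one must be a little careful about the correct relabelling of $x$ itself, namely that a letter equal to $x$ is sent to the value immediately below the relabelling of the element it bumps, which is precisely how standardization of the new row works. The recording-tableau identity $\qsk(\stan(w)) = \qsk(w)$ is then a corollary: standardization does not change which cell is added at each step (it only relabels entries, never the shape), so the sequence of added cells — which is all $\qsk$ records — is unchanged.

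\textbf{Main obstacle.}
The genuinely delicate point is bookkeeping in the row-insertion commutation claim when the inserted letter $x$ equals one or several letters already present in the row $u$. Standardization assigns consecutive integers to equal letters in left-to-right order of their appearance in the \emph{reading word of the whole SSDT}, not just the row, so I need to make sure the relabelling of $x$ is consistent with the fact that $x$ is inserted after all of $w'$ has been processed — i.e. $x$'s copy of its value is the largest among equal copies seen so far along the reading word. One checks this is compatible: in a hook word the decreasing part comes first, and the SK rule's use of strict inequality in $u\uparrow$ together with $\le$ in $u\downarrow$ forces the bumped element to be one whose standardized label is determined independently of where exactly $x$'s label falls, so long as $x$'s label is placed consistently. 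I expect this to require splitting into the cases ``$ux$ is already a hook word (no bump)'' versus ``a bump occurs,'' and within the latter, a short case analysis on whether the bumped element lies in $u\uparrow$ or in $u\downarrow$; each case is routine but must be written out to be convincing.
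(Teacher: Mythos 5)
Your proposal is correct and follows essentially the same route as the paper: the paper's (much terser) proof also rests on the single observation that the SK bumping rule treats a letter bumped by an equal letter exactly as if the later occurrence were strictly larger, which is precisely the tie-breaking that standardization encodes, so equal letters keep their relative order, the shape evolution (hence $\qsk$) is unchanged, and the third identity is definitional. The only point worth tightening in a write-up is that in your induction the standardization of the prefix $w'$ is only order-isomorphic, not equal, to the restriction of $\stan(w)$; since SK insertion depends only on relative order, this is harmless, but it should be said.
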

\begin{proof}
Cases (1) and (2) of the SK insertion algorithm treat the two following scenarios in the same fashion:
\begin{itemize}
\item when a letter $i$ gets bumped by a letter $j > i$;
\item when a letter $i$ gets bumped by another letter $i$.
\end{itemize}
Therefore, if two letters $i$ are in an SSDT $R$, the algorithm treats them as if the rightmost one in the reading word was larger. Furthermore, the $i$ at the right will always remain at the right of the $i$ at the left, i.e., they will always maintain their relative position with respect to each other. The third equation is a rewording of the definition of standardization of an SSDT.
\end{proof}

\begin{lemma}\label{stansyt}
Let $w$ be a word and $T$ a shifted Young tableau. Then,
\begin{eqnarray*}
\pmix(\stan(w))        & = &  \stan(\pmix(w)); \\
\qmix(\stan(w))        & = & \qmix(w); \\
\mread(\stan(T))    & = & \stan(\mread(T)).
\end{eqnarray*}
\end{lemma}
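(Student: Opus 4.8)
\textbf{Proof plan for Lemma \ref{stansyt}.}

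The plan is to reduce all three statements to the corresponding statements about SK insertion already established in Lemma \ref{stansdt}, using the word-preserving bijection $\Phi$ between SSDT and shifted Young tableaux (Theorem \ref{bijection}) together with the identification of the mixed and SK recording tableaux (Proposition \ref{samerecordingtableau}). The key observation is that $\Phi$ intertwines standardization: if $R$ is an SSDT then $\stan(\Phi(R)) = \Phi(\stan(R))$ as Haiman tableaux. To see this, recall that $\Phi(R) = \pmix(\word(R))$ and $\Phi(\stan(R)) = \pmix(\word(\stan(R))) = \pmix(\stan(\word(R)))$ by the third equation of Lemma \ref{stansdt}; so it suffices to know that $\pmix(\stan(w)) = \stan(\pmix(w))$, which is exactly the first equation we are trying to prove. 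Hence the three equations are not independent: the first is the real content, and the other two follow formally.

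First I would prove $\pmix(\stan(w)) = \stan(\pmix(w))$ directly. One approach is to mimic the proof in the SDT setting: show that at each step of the mixed insertion algorithm, the bumping route taken when inserting a letter into a tableau is unchanged under standardization, because cases (1) and (2) of the mixed insertion rule treat ``$a$ bumps the smallest strictly larger element'' in a way that handles equal letters consistently — two equal entries $i$ in the partially built tableau behave, for the purposes of determining which element is bumped and where it goes, exactly as though the one occurring later in the insertion order were strictly larger, and standardization preserves these relative positions. This is the direct analog of the argument used for SK insertion in Lemma \ref{stansdt}, applied to the four sub-cases of a bumped letter being off-diagonal/unprimed, off-diagonal/primed, or on the main diagonal. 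Since the sequence of boxes added is the same, $\qmix(\stan(w)) = \qmix(w)$ comes out of the same induction at no extra cost. Alternatively, and perhaps more cleanly, one can deduce the first two equations from Lemma \ref{stansdt} by writing $\pmix(w) = \Phi(\psk(w))$, $\qmix(w) = \qsk(w)$ (Proposition \ref{samerecordingtableau} and Remark \ref{simplebijection}), and then computing
\[
\pmix(\stan(w)) = \Phi(\psk(\stan(w))) = \Phi(\stan(\psk(w))),
\]
so that the statement $\pmix(\stan(w)) = \stan(\pmix(w))$ becomes equivalent to $\Phi(\stan(\psk(w))) = \stan(\Phi(\psk(w)))$, i.e.\ to the $\Phi$-equivariance of standardization; and $\qmix(\stan(w)) = \qsk(\stan(w)) = \qsk(w) = \qmix(w)$ is immediate. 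This second route has the advantage of isolating a single clean fact.

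For the third equation, $\mread(\stan(T)) = \stan(\mread(T))$, I would argue as follows. Set $w = \mread(T)$, so $\pmix(w) = T$ and $\qmix(w) = U$, the special recording tableau of shape $\sh(T)$. By the first two equations, $\pmix(\stan(w)) = \stan(T)$ and $\qmix(\stan(w)) = \qmix(w) = U$. Since mixed insertion is a bijection and $U$ is still the special recording tableau of the (unchanged) shape, $\stan(w)$ is the unique word whose mixed insertion tableau is $\stan(T)$ and whose mixed recording tableau is the special recording tableau; that is, $\stan(w) = \mread(\stan(T))$ by Definition \ref{mreaddef}. Hence $\mread(\stan(T)) = \stan(w) = \stan(\mread(T))$.

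The main obstacle is establishing the first equation, $\pmix(\stan(w)) = \stan(\pmix(w))$, at the level of the bumping routes — verifying carefully that standardization does not alter which element is bumped at any step and, crucially, does not alter whether a bumped element lands in the next row, the next column, or gets primed on the diagonal. The subtlety specific to the shifted case is the interaction between primed and unprimed equal letters on and off the diagonal: under Haiman's standardization the entries $i'$ precede the entries $i$ in the new labelling, and one must check this matches the order in which mixed insertion would encounter them. Once this case analysis is in place (or once the $\Phi$-equivariance shortcut is justified), everything else is bookkeeping.
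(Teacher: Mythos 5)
Your primary route is essentially the paper's own proof: the paper establishes exactly the case analysis you defer as the ``main obstacle,'' namely a three-case claim about two equal letters $w_a=w_b=i$ with $a<b$ in the word --- if both end up unprimed then $c_a<c_b$, if both primed then $r_a<r_b$, and the case ($w_a$ unprimed, $w_b$ primed) cannot occur --- which is precisely the statement that the final positions of equal letters respect Haiman's standardization convention (primed entries top-to-bottom before unprimed entries left-to-right); your derivation of the second equation from the shape sequence and of the third from the first two via the special recording tableau is sound and in fact more explicit than the paper's ``the result follows from the claim.'' One caution about your ``cleaner'' alternative route: in the paper's logical order, Proposition \ref{samerecordingtableau} and the $\Phi$-equivariance of standardization are themselves consequences of the standardization lemmas (including this one), so deducing $\qmix(\stan(w))=\qsk(\stan(w))=\qsk(w)=\qmix(w)$ from Proposition \ref{samerecordingtableau} risks the same circularity you correctly flagged for the $\Phi$ shortcut; the direct argument you give first is the one to keep.
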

\begin{proof}
Let $w = w_1 \cdots w_l$, and $T = \pmix(w)$. The result follows from the next claim. Say $w$ has two letters equal to $i$, say, $w_a = w_b = i$ for $a < b$. We say that a letter is primed or unprimed depending of the state of its corresponding entry in $T$. Let $r_a$ and $c_a$ be the row and column where $w_a$ gets located in $T$, and $r_b$, $c_b$ the row and column where $w_b$ gets located. Then either
\begin{itemize}
\item $w_a$ and $w_b$ are unprimed and $c_a < c_b$,
\item $w_a$ and $w_b$ are primed and $r_a < r_b$, or
\item $w_a$ is primed and $w_b$ is unprimed.
\end{itemize}
The claim is proved as follows. Assume that both $w_a$ and $w_b$ are unprimed. As they both have the label $i$, then whenever $w_b$ reaches the row where $w_a$ is, it must be at its right. No number smaller than $i$ can bump $w_b$ before bumping $w_a$. Therefore, $w_a$ always remains strictly at the left of $w_b$, i.e., $c_a < c_b$.

If both $w_a$ and $w_b$ are primed, namely, they have the label $i'$, it means they both reached the diagonal and later got bumped out of the diagonal. An analogous argument to the above one, replacing columns for rows, will show that $w_a$ must be in a row above $w_b$, i.e., $r_a < r_b$.

It remains to show that the case where $w_a$ is unprimed and $w_b$ is primed can not occur. For this to happen, $w_b$ must reach the diagonal, but $w_a$ must not. But this is impossible since $w_b$ must remain at the right of $w_a$ when they are both unprimed.
\end{proof}

\begin{proof}[Proof of Theorem \ref{monoidtheorem}]
Let $u, v$ be words in the alphabet $X$. By Lemma \ref{stanword}, $u \seq v$ if and only if they have the same content and $\stan(u) \seq \stan(v)$.

By Corollary 3.2 in \cite{HaD}, $\stan(u) \seq \stan(v)$ if and only if $\pmix(\stan(u)) = \pmix(\stan(v))$. By Lemma \ref{stansyt}, the latter is equivalent to $\stan(\pmix(u)) = \stan(\pmix(v))$.

By the definition of standardization of shifted Young tableaux, it is clear that $\pmix(u) = \pmix(v)$ if and only if $u$ and $v$ have the same content and $\stan(\pmix(u)) = \stan(\pmix(v))$. The result then follows.
\end{proof}

The following theorem is the semistandard extension of \cite[Lemma 3.11]{TKL}.

\begin{lemma}\label{uni}
Let $R$ be an SSDT and $w = w_1 \cdots w_l$ a word in the alphabet $X$. Let $S$ be the SSDT obtained by SK inserting $w_1, \ldots, w_l$ into $R$. Let $\mu = \sh(R)$ and $\lambda = \sh(S)$. Then $w$ is a hook word if and only if the entries in the standard skew tableau of shape $\lambda / \mu$ that records the insertion of $w_1, \ldots, w_l$ in $R$ is a vee.
\end{lemma}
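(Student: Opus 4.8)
The plan is to standardize and thereby reduce the assertion to the corresponding statement for reduced words in the hyperoctahedral group, which is \cite[Lemma~3.11]{TKL}.

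First I would convert the relative insertion into a single SK insertion. Set $v=\word(R)\,w$. Since $R$ is an SSDT, the bijection $\Phi$ of Theorem~\ref{bijection} (with $\word(R)=\mread(\Phi(R))$, cf.\ Remark~\ref{simplebijection}) gives $\psk(\word(R))=R$, so running SK insertion on the letters of $v$ first rebuilds $R$ and then carries out the insertion of $w_1,\dots,w_l$ into $R$. Hence $\psk(v)=S$, and in $\qsk(v)$ the entries $1,\dots,|\mu|$ fill the shape $\mu$ while the entries $|\mu|+1,\dots,|\lambda|$, returned to the cells of $\lambda/\mu$ and decreased by $|\mu|$, are exactly the skew recording tableau of the statement; call it $V$.

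Next I would standardize $v$. By Lemma~\ref{stansdt}, $\qsk(\stan(v))=\qsk(v)$ and $\psk(\stan(v))=\stan(\psk(v))$, so $V$ is unaffected if $v$ is replaced by the permutation $v^{*}=\stan(v)$. Write $v^{*}=v^{*}_{R}\,v^{*}_{w}$ with $v^{*}_{R}$ the length-$|\mu|$ prefix coming from $\word(R)$ and $v^{*}_{w}$ the length-$l$ suffix coming from $w$. Each of $v^{*}_{R}$ and $v^{*}_{w}$ is order-isomorphic to $\word(R)$, resp.\ $w$ (equal letters being separated from left to right, exactly as standardization does); in particular $\stan(v^{*}_{w})=\stan(w)$, so $v^{*}_{w}$ is a hook word if and only if $w$ is, and $\psk(v^{*}_{R})$ is an SSDT of shape $\mu$ (with distinct entries, being a prefix of a permutation). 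Inserting $v^{*}_{w}$ into $\psk(v^{*}_{R})$ produces $\psk(v^{*})=\stan(S)$ and records the same $V$. Finally, $v^{*}$ is a permutation, hence a reduced word in some $B_{N}$, and its prefix $v^{*}_{R}$ is reduced as well; so the claim has become the statement that for a reduced word $v^{*}$ split as a prefix $v^{*}_{R}$ of length $|\mu|$ followed by $v^{*}_{w}$, the portion of $\qsk(v^{*})$ recording the insertion of $v^{*}_{w}$ is a vee if and only if $v^{*}_{w}$ is a hook word. This is \cite[Lemma~3.11]{TKL}.

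The step I expect to require care is the bookkeeping in the standardization: one must verify that the cell created at the $t$-th step of inserting $w$ into $R$ is the cell created at step $|\mu|+t$ of computing $\psk(v^{*})$ from the empty shape---that is, that standardization commutes with SK insertion \emph{relative} to a nonempty tableau and not only with insertion from scratch---and that $v^{*}_{R}$ is genuinely a prefix of a reduced word, so that \cite[Lemma~3.11]{TKL} applies verbatim; both follow from Lemma~\ref{stansdt} applied to the single word $v$ together with the fact that prefixes of reduced words are reduced. One could instead avoid \cite{TKL} altogether by an induction on $l$ that analyzes the hook-word row-insertion rule of Definition~\ref{skinsertion} directly and reads off how the decreasing run and the increasing run of $w$ contribute, respectively, the vertical and horizontal parts of the recorded vee; there the delicate point is controlling how far down the tableau each bumped letter descends, and that is where the bulk of the work would lie.
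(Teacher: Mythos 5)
Your proof is correct and takes essentially the route the paper intends: the paper gives no written proof of Lemma~\ref{uni}, introducing it only as ``the semistandard extension of \cite[Lemma 3.11]{TKL},'' and your argument supplies exactly the implicit reduction --- pass to the single word $v=\word(R)\,w$, standardize, use Lemma~\ref{stansdt} to see that the recording tableau (hence the skew recording tableau $V$) is unchanged, and apply Lam's lemma to the resulting reduced word in $B_N$. The extra bookkeeping you flag (that standardization commutes with insertion relative to a nonempty tableau, and that the standardized prefix is reduced) is sound and is precisely the content the paper leaves unwritten.
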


\begin{proof}[Proof of Theorem \ref{commutetheorem}]
It suffices to prove that the shifted plactic Schur $P$-functions satisfy the Pieri rule, namely
\[ \mathcal{P}_{\mu} \mathcal{P}_{(k)} = \sum_{\lambda} 2^{c(\lambda / \mu) - 1} \mathcal{P}_{\lambda},
\]
where $\lambda$ runs over all strict partitions such that $\lambda / \mu$ is a (possibly disconnected) border strip with $c(\lambda / \mu)$ connected components.

It is well known that a skew tableau $U_{\lambda / \mu}$ of shape ${\lambda / \mu}$ is a vee if and only if it rectifies to the single row with entries $i, i+1, \ldots, k$ under jeu de taquin, and that the number of vees of this shape with a fixed content is exactly $2^{c(\lambda / \mu)-1}$, where ${c(\lambda / \mu)}$ is the number of connected components of ${\lambda / \mu}$.

Recall that $\mathcal{D}(\lambda)$ is the set of SSDT of shape $\lambda$, where we denote $\mathcal{D}((k))$ by $\mathcal{D}(k)$.  Let $\mu \oplus k$ be the set of shifted shapes $\lambda$ such that $\lambda / \mu$ is a (possibly disconnected) border strip of size $k$. Let $\mathcal{V}(\lambda / \mu)$ be the set of vees of shape $\lambda / \mu$ filled with the entries $|\mu|+1, |\mu|+2, \cdots. |\lambda|$.

We will prove that $\mathcal{D}(\mu) \times \mathcal{D}(k)$ and $\bigcup_{\lambda \in \mu \oplus k} \left( \mathcal{D}(\lambda) \times \mathcal{V}(\lambda / \mu) \right)$ are in bijection, and moreover, that the content of the elements in $\mathcal{D}(\mu)$ and $\mathcal{D}(k)$ adds up to the content of the element in $\mathcal{D}(\lambda)$ in their image. Thus, the theorem will follow since $\mathcal{P}_{\lambda}$ is the formal sum of all elements in $\mathcal{D}(\lambda)$.

Consider the map
\[
\Phi : \mathcal{D}(\mu) \times \mathcal{D}(k) \rightarrow \bigcup_{\lambda \in \mu \oplus k} \left( \mathcal{D}(\lambda) \times \mathcal{V}(\lambda / \mu) \right)
\]
defined as follows. Given $S_{\mu} \in \mathcal{D}(\mu)$ and $T_{k} \in \mathcal{D}(k)$, insert the elements of $\word(T_{k})$ into $S_{\mu}$ from left to right to obtain an SSDT $R_{\lambda}$ of some shape $\lambda$. Since $\word(T_{k})$ is a hook word then by Lemma \ref{uni}, $\lambda \in \mu \oplus k$, and the standard skew tableau $V_{\lambda / \mu}$ that records this insertion is in $\mathcal{V}(\lambda / \mu)$. We define $\Phi(S_{\mu},T_{k}) = (R_{\lambda},V_{\lambda / \mu})$. Clearly, the content of $S_{\mu}$ and $T_{k}$ add up to the content of~$R_{\lambda}$.

The inverse map
\[
\Psi : \bigcup_{\lambda \in \mu \oplus k} \left( \mathcal{D}(\lambda) \times \mathcal{V}(\lambda / \mu) \right) \rightarrow \mathcal{D}(\mu) \times \mathcal{D}(k)
\]
is defined in a very similar way, by removing the elements of $R_{\lambda}$ in the order given by the vee $V_{\lambda / \mu}$, to obtain $S_{\nu} \in \mathcal{D}(\mu)
$ and $T_{k} \in \mathcal{D}(k)$. Since this map removes the elements that were inserted in the definition of $\Phi$, it is clear that the composition of these two maps is the identity.

The following example illustrates the bijection:
\[
\ycenter
\Phi \left(
\young(4211,:313,::2) , \young(53122)
\right)
=
\left( \young(5311122,:3124,::23) ,
\young(~~~~145,:~~~2,::~3)
%\young(~~~~9{\twelve}{\thirteen},:~~~{\ten},::~{\eleven})
\right). \qedhere
\]
\end{proof}

The following definition appears in \cite{TKL} and \cite{Sa}.

\begin{definition}
Let $U$ be a standard shifted Young tableau. Define $\Delta(U)$ to be the tableau obtained by applying the following operations:
\begin{enumerate}
\item remove the entry $1$ from $U$;
\item apply a jeu de taquin slide into this box;
\item deduct $1$ from the remaining boxes.
\end{enumerate}
\end{definition}

The following lemma follows straight from \cite[Theorem 4.14]{TKL} and the standardization Lemmas \ref{stanword} and \ref{stansdt}.

\begin{lemma}\label{oneevacuation}
Let $w_1 w_2 \cdots w_l$ be a word. Then, 
\[
\qsk(w_2 \cdots w_l) = \Delta \qsk(w_1 w_2 \cdots w_l).
\]
\end{lemma}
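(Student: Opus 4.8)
The plan is to reduce the identity to the case of permutations, where it is precisely \cite[Theorem 4.14]{TKL}, and to use the standardization Lemmas \ref{stanword} and \ref{stansdt} as the bridge. Recall that $\Delta$ deletes the box carrying the entry $1$, performs a jeu de taquin slide into that box, and decrements the remaining entries; since we only ever slide a standard skew shifted tableau here, this is the ordinary jeu de taquin slide, so no separate ``shifted'' conventions are involved.

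First I would record the elementary observation that, for any word $w = w_1 \cdots w_l$, the letter $w_1$ is the first letter SK-inserted, so the box of $\qsk(w)$ carrying the entry $1$ is exactly the box created when $w_1$ is inserted into the empty tableau. Thus deleting $w_1$ from $w$ is mirrored on the recording tableau by deleting the box labelled $1$ and re-running the insertion, which is exactly what $\Delta$ is designed to capture. Next, since the standardization of a word depends only on the relative order of its letters (ties broken by position), deleting the first letter commutes with standardizing: writing $\sigma = \stan(w)$, one has $\stan(w_2 \cdots w_l) = \stan(\sigma_2 \cdots \sigma_l)$, both being the order-preserving relabelling of $\sigma_2 \cdots \sigma_l$ onto $\{1, \ldots, l-1\}$. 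Applying Lemma \ref{stansdt} (which asserts that $\qsk$ is unchanged under standardization) to $w$, to $w_2 \cdots w_l$, and to $\sigma_2 \cdots \sigma_l$ gives
\[
\qsk(w_2 \cdots w_l) = \qsk(\stan(w_2 \cdots w_l)) = \qsk(\sigma_2 \cdots \sigma_l),
\]
and $\qsk(w) = \qsk(\sigma)$.

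Finally, a permutation $\sigma$ is a reduced word in $B_N$ for $N$ large (with $i$ standing for the $i$-th generator), so \cite[Theorem 4.14]{TKL} applies and yields $\qsk(\sigma_2 \cdots \sigma_l) = \Delta\,\qsk(\sigma)$. Chaining the displayed equalities then gives $\qsk(w_2 \cdots w_l) = \Delta\,\qsk(w)$, as claimed; Lemma \ref{stanword} is used only if one wishes to phrase the compatibility of standardization with insertion at the level of shifted plactic classes. I expect no real obstacle: the only genuinely delicate input is the permutation case, which is quoted rather than reproved, and the remaining work is the routine check that $\Delta$ tracks deletion of the first letter and that this deletion commutes with standardization. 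The one point to watch is convention matching — that ``remove the entry $1$'' in the definition of $\Delta$ corresponds to the first (not the last) letter of $w$, which is consistent since $w_1$ is inserted first.
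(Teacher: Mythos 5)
Your proposal is correct and follows essentially the same route as the paper, which simply derives the lemma from \cite[Theorem 4.14]{TKL} together with the standardization Lemmas \ref{stanword} and \ref{stansdt}. You merely spell out the (correct) intermediate steps — that deleting the first letter commutes with standardization and that $\qsk$ is invariant under standardization — which the paper leaves implicit.
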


\begin{proof}[Proof of Lemma \ref{slrrulesarethesame}]
The bijection $\Phi$ in the proof of Theorem \ref{commutetheorem} is a special case of the following bijection:
\[
\Phi: \mathcal{D}(\mu) \times \mathcal{D}(\nu) \rightarrow \bigcup_{\lambda} \left( \mathcal{D}(\lambda) \times \mathcal{V}(\lambda / \mu) \right),
\]
where $\mathcal{V}(\lambda / \mu)$ is the set of standard shifted skew tableau which rectify to the special recording tableau of shape $\nu$. This bijection is described in the exact same way, so we will not go into much detail. However, it is necessary to check that $\mathcal{V}(\lambda / \mu)$ is indeed the set we claim.

For a skew tableau $T$ and an integer $k$, let $T+k$ be the skew tableau of the same shape, where all the entries are raised by $k$. Let $S_{\mu} \in \mathcal{D}(\mu)$, $T_{\nu} \in \mathcal{D}(\nu)$. Let $\lambda$ be the shape of $\pmix(\mread(S_{\mu}) \mread(T_{\nu}))$. Let $U_{\lambda / \mu} \in \mathcal{V}(\lambda / \mu)$ be the standard shifted skew tableau of shape $\lambda / \mu$ which records the order in which the entries of $\mread(T_{\nu})$ get inserted into $S_{\mu}$. Note that $U_{\lambda / \mu} + |\mu|$ is precisely the subtableau of $\qmix(\mread(S_{\mu}) \mread(T_{\nu}))$ corresponding to the shape $\lambda / \mu$. By Lemma \ref{oneevacuation} applied repeatedly, one can see that $\qmix(\mread(T_{\nu}) = \Delta^{|\mu|} (U_{\lambda / \mu} + |\mu|)$. But $\Delta^{|\mu|} (U_{\lambda / \mu} + |\mu|)$ is nothing more than the restriction of $U_{\lambda / \mu}$. Since $\qmix(\mread(T_{\nu}))$ is the standard recording tableau of shape $\nu$, by definition, then $U_{\lambda / \mu} \in \mathcal{V}(\lambda / \mu)$.

Lemma \ref{slrrulesarethesame} then follows, if one lets $u = \mread(S_{\mu})$, $v = \mread(T_{\nu})$, and $Q = \qmix(\mread(T_{\nu}))$, i.e., the special recording tableau of shape $\nu$.
\end{proof}

\begin{proof}[Proof of Theorems \ref{pschurintermsofschur} and \ref{usstem}]
Let $U_{\lambda}$ be the special recording tableau of shifted shape~$\lambda$. Let $\mathcal{H}(\lambda,\mu)$ be the set of (ordinary) standard Young tableaux of shape $\mu$ which rectify to $U_{\lambda}$ (the proof is the same if $U_{\lambda}$ is any other standard shifted Young tableau of shape~$\lambda$).

Let $\langle P_{\mu} \rangle$ be a plactic class of shape $\mu$, and $\mathcal{G}(\lambda,P_{\mu})$ be the set of shifted plactic classes $[T_{\lambda}]$ of shifted shape $\lambda$ such that $\pi([T_{\lambda}]) = \langle P_{\mu} \rangle$. We will prove that the size of $\mathcal{G}(\lambda,P_{\mu})$ does not depend on $P_{\mu}$ (only on $\lambda$ and $\mu$), by finding a bijection $\Theta$ between $\mathcal{G}(\lambda,P_{\mu})$ and $\mathcal{H}(\lambda,\mu)$.

Define the maps
\[
\begin{array}{cccc}
\Theta:    & \mathcal{G}(\lambda,P_{\mu})    & \rightarrow    & \mathcal{H}(\lambda,\mu) \\
    & [T_{\lambda}]                    & \mapsto        & \qr (\mread (T_{\lambda}))
\end{array}
\]
and
\[
\begin{array}{cccc}
\Gamma:    & \mathcal{H}(\lambda,\mu)    & \rightarrow    & \mathcal{G}(\lambda,P_{\mu}) \\
        & Q                        & \mapsto        & [\pmix(w)]
\end{array}
\]
where $w$ is the word in the alphabet $X$ such that $\pr(w) = P_{\mu}$ and $\qr(w) = Q$.

First assume that $[T_{\lambda}] \in \mathcal{G}(\lambda,P_{\mu})$. Let $w = \mread(T_{\lambda})$ (i.e., the canonical representative of $[T_{\lambda}]$), so by definition, $\qmix(w) = U_{\lambda}$. By Theorem \ref{jdttheorem}, $\qr(w)$ rectifies to $U_{\lambda}$, so $\Theta([T_{\lambda}]) = \qr(\mread(T_{\lambda})) = \qr(w) \in \mathcal{H}(\lambda,\mu)$.

Now assume that $Q \in \mathcal{H}(\lambda,\mu)$. Let $w$ be such that $\pr(w) = P_{\mu}$ and $\qr(w) = Q$. Then, again by Theorem \ref{jdttheorem}, $Q = \qr(w)$ rectifies to $\qmix(w)$, but as $Q \in \mathcal{H}(\lambda,\mu)$, then $\qmix(w)$ has shape $\lambda$. Furthermore, as $\pr(w) = P_{\mu}$, then $\Gamma(Q) = [\pmix(w)] \in \mathcal{G}(\lambda,P_{\mu})$.

To prove that $\Theta$ and $\Gamma$ are inverse maps, again let $Q \in \mathcal{H}(\lambda / \mu)$. Thus, $\Gamma(Q) = [\pmix(w)]$ where $w$ is the word such that $\pr(w) = P_{\mu}$ and $\qr(w) = Q$. Since $Q$ rectifies to $U_{\lambda}$, $\qr(w)$ rectifies to $\qmix(w)$ (by Theorem \ref{jdttheorem}), and $\qmix(w) = Q$, then $\qmix(w) = U_{\lambda}$. Thus, $w$ is the canonical representative of $[\pmix(w)]$, which implies that $\mread(\pmix(w)) = w$. Therefore,
\begin{eqnarray*}
\Theta(\Gamma(Q))	& = & \Theta([\pmix(w)]) \\
				& = & \qr(\mread(\pmix(w))) \\
				& = & \qr(w) \\
				& = & Q.
\end{eqnarray*}
The proof that $\Gamma(\Theta([T_{\lambda}])) = [T_{\lambda}]$ is similar.

%Notice that if $Q = \Theta([T_{\lambda}])$ for some $[T_{\lambda}] \in \mathcal{H}(\lambda,\mu)$, then $\Gamma(Q)$ is the shifted plactic class of the word $w$ such that $\pr(w) = P_{\mu}$ and $\qr(w) = \qr(\mread(T_{\lambda}))$. But the word $\mread(T_{\lambda})$ also satisfies these two conditions, since $[T_{\lambda}] \in \mathcal{G}(\lambda,P_{\mu})$ and a word is uniquely defined by its Robinson-Schensted-Knuth insertion and recording tableau. Therefore $\Theta$ and $\Gamma$ are inverse maps.

Since $\mathcal{H}(\lambda,\mu)$ clearly does not depend on the choice of $P_{\mu}$, but only on the shape $\mu$, then neither does the number of shifted plactic classes $[T_{\lambda}]$ of shifted shape $\lambda$ such that $\pi([T_{\lambda}]) = \langle P_{\mu} \rangle$. This number is precisely $g^{\lambda}_{\mu}$. Furthermore, it is also equal to the size of $\mathcal{H}(\lambda,\mu)$, which proves Theorem \ref{usstem}.
\end{proof}

\begin{proof}[Proof of Proposition \ref{ssdtproposition}]
Let $u_1, \ldots, u_l$ be hook words. We use the fact that $w = u_1 \cdots u_l$ is a shifted tableau word if and only if the tableau formed by the rows $u_1, \ldots, u_l$ from top to bottom is an SSDT.

Clearly, $(2)$ implies $(2')$. We will prove the converse.

It suffices to prove the claim for $l = 3$. Namely, that for three hook words $u_1, u_2, u_3$ of lengths $a > b > c$, respectively, if $u_3 u_2$ and $u_2 u_1$ are shifted tableau words, then so is $u_3 u_2 u_1$. Equivalently, we will prove that if the filling of the shape $(a,b)$ with rows $u_1$ and $u_2$, and the filling of the shape $(b,c)$ with rows $u_2$ and $u_3$ are both SSDT, then the filling of the shape $(a,b,c)$ with rows $u_1$, $u_2$, and $u_3$ is an SSDT as well.

By Theorem \ref{wordskequivalent}, $\qsk(u_3 u_2)$ is the special recording tableau of shape $(b,c)$. Let us assume that the tableu formed by the rows $u_1$, $u_2$, $u_3$ is not an SSDT. This means that the longest hook subword in the word $u_3 u_2 u_1$ is of length $d$, for some $d > a$. Therefore, by Theorem \ref{shiftedgreenestheorem}, the top row of $\qsk(u_3 u_2 u_1)$ is of length $d$. By Lemma \ref{oneevacuation} applied repeatedly, $\qsk(u_2 u_1) = \Delta^c \qsk(u_3 u_2 u_1)$. But note that since $\qsk(u_3 u_2)$ is a subtableau of $\qsk(u_3 u_2 u_1)$, and $\Delta^c \qsk(u_3 u_2) = \qsk(u_2)$ which is a row of length $c$, then the top row of $\Delta^c (\qsk(u_2 u_1))$ has length $d$ (because applying $\Delta^c$ to $\qsk(u_3 u_2 u_1)$ will not alter the top length of the top row, since it doesn't alter the top row of $\qsk(u_3 u_2)$). This contradicts the assumption that $u_2 u_1$ is a shifted tableau word, completing the proof.
\end{proof}

\vspace{-.2in}

\appendix

\section*{Appendix. Shifted plactic classes of 4-letter words}

The following tables show all types of 4-letter words $w$, together with their corresponding mixed insertion tableaux $\pmix(w)$, and their Robinson-Schensted-Knuth insertion tableau $\pr(w)$. By convention, ~$a < b < c < d$.

\setlength{\tabcolsep}{12pt}
\noindent
\[
\begin{tabular}{|c|c|c|}\hline
& & \\[-.1in]
$w$            & $\pmix(w)$            & $\pr(w)$         \\[.04in] \hline
& & \\[-.1in]
$aaaa$        & \young(aaaa)            & \young(aaaa) \\[.04in] \hline
\end{tabular}
\]
\vspace{-.2in}
\[
\begin{tabular}{|c|c|c|}\hline
& & \\[-.1in]
$w$            & $\pmix(w)$            & $\pr(w)$         \\[.04in] \hline
& & \\[-.1in]
$aaab$        & \young(aaab)                & \young(aaab) \\[.04in] \hline
& & \\[-.1in]
$aaba$        & \multirow{2}{*}{$\young(aaa,:b)$}    & \multirow{3}{*}{$\young(aaa,b)$} \\[.04in] \cline{1-1}
& & \\[-.1in]
$abaa$        &                             &\\[.04in] \cline{1-2}
& & \\[-.1in]
$baaa$        & $\young(aaa{\bprime})$            & \\[.04in] \hline
\end{tabular}
\hspace{.51in}
\begin{tabular}{|c|c|c|}\hline
& & \\[-.1in]
$w$            & $\pmix(w)$            & $\pr(w)$         \\[.04in] \hline
& & \\[-.1in]
$abbb$        & \young(abbb)            & \young(abbb) \\[.04in]\hline
& & \\[-.1in]
$bbba$        & \multirow{2}{*}{$\young(a{\bprime}b,:b)$}    & \multirow{3}{*}{$\young(abb,b)$} \\[.04in]\cline{1-1}
& & \\[-.1in]
$bbab$        &                             &\\[.04in]\cline{1-2}
& & \\[-.1in]
$babb$        & $\young(a{\bprime}bb)$            & \\[.04in]\hline
\end{tabular}
\]
\vspace{-.2in}
\[
\begin{tabular}{|c|c|c|}\hline
& & \\[-.1in]
$w$            & $\pmix(w)$            & $\pr(w)$         \\[.04in] \hline
& & \\[-.1in]
$aabb$        & \young(aabb)                & \young(aabb) \\[.04in] \hline
& & \\[-.1in]
$abba$        & \multirow{2}{*}{$\young(aab,:b)$}    & \multirow{3}{*}{$\young(aab,b)$} \\[.04in] \cline{1-1}
& & \\[-.1in]
$abab$        &                             &\\[.04in] \cline{1-2}
& & \\[-.1in]
$baab$        & $\young(aa{\bprime}b)$            & \\[.04in] \hline

& & \\[-.1in]
$baba$        & \multirow{2}{*}{$\young(aa{\bprime},:b)$}    & \multirow{2}{*}{$\young(aa,bb)$} \\[.04in]\cline{1-1}
& & \\[-.1in]
$bbaa$        &                            & \\[.04in]\hline
\end{tabular}
\]
\begin{tabular}{|c|c|c|}\hline
& & \\[-.1in]
$w$            & $\pmix(w)$            & $\pr(w)$         \\[.04in] \hline
& & \\[-.1in]
$aabc$        & \young(aabc)            & \young(aabc) \\[.04in]\hline
& & \\[-.1in]
$abca$        & \multirow{2}{*}{$\young(aac,:b)$}    & \multirow{3}{*}{$\young(aac,b)$} \\[.04in]\cline{1-1}
& & \\[-.1in]
$abac$        &                             &\\[.04in]\cline{1-2}
& & \\[-.1in]
$baac$        & $\young(aa{\bprime}c)$            & \\[.04in]\hline

& & \\[-.1in]
$aacb$        & \multirow{2}{*}{$\young(aab,:c)$}    & \multirow{3}{*}{$\young(aab,c)$} \\[.04in]\cline{1-1}
& & \\[-.1in]
$acab$        &                             &\\[.04in]\cline{1-2}
& & \\[-.1in]
$caab$        & $\young(aab{\cprime})$            & \\[.04in]\hline

& & \\[-.1in]
$acba$        & \multirow{2}{*}{$\young(aa{\cprime},:b)$}    & \multirow{3}{*}{$\young(aa,b,c)$} \\[.04in]\cline{1-1}
& & \\[-.1in]
$caba$        &                             &\\[.04in]\cline{1-2}
& & \\[-.1in]
$cbaa$        & $\young(aa{\bprime}{\cprime})$            & \\[.04in]\hline

& & \\[-.1in]
$baca$        & \multirow{2}{*}{$\young(aa{\bprime},:c)$}    & \multirow{2}{*}{$\young(aa,bc)$} \\[.04in]\cline{1-1}
& & \\[-.1in]
$bcaa$        &                            & \\[.04in]\hline
\end{tabular}
\begin{tabular}{|c|c|c|}\hline
& & \\[-.1in]
$w$            & $\pmix(w)$            & $\pr(w)$         \\[.04in] \hline
& & \\[-.1in]
$abbc$        & \young(abbc)            & \young(abbc) \\[.04in]\hline
& & \\[-.1in]
$bbca$        & \multirow{2}{*}{$\young(a{\bprime}c,:b)$}    & \multirow{3}{*}{$\young(abc,b)$} \\[.04in]\cline{1-1}
& & \\[-.1in]
$bbac$        &                             &\\[.04in]\cline{1-2}
& & \\[-.1in]
$babc$        & $\young(a{\bprime}bc)$            & \\[.04in]\hline

& & \\[-.1in]
$abcb$        & \multirow{2}{*}{$\young(abb,:c)$}    & \multirow{3}{*}{$\young(abb,c)$} \\[.04in]\cline{1-1}
& & \\[-.1in]
$acbb$        &                             &\\[.04in]\cline{1-2}
& & \\[-.1in]
$cabb$        & $\young(abb{\cprime})$            & \\[.04in]\hline

& & \\[-.1in]
$bcba$        & \multirow{2}{*}{$\young(a{\bprime}{\cprime},:b)$}    & \multirow{3}{*}{$\young(ab,b,c)$} \\[.04in]\cline{1-1}
& & \\[-.1in]
$cbba$        &                             &\\[.04in]\cline{1-2}
& & \\[-.1in]
$cbab$        & $\young(a{\bprime}b{\cprime})$            & \\[.04in]\hline

& & \\[-.1in]
$bacb$        & \multirow{2}{*}{$\young(a{\bprime}b,:c)$}    & \multirow{2}{*}{$\young(ab,bc)$} \\[.04in]\cline{1-1}
& & \\[-.1in]
$bcab$        &                            & \\[.04in]\hline
\end{tabular}
\linebreak
\hspace{.1in}
\begin{tabular}{|c|c|c|}\hline
& & \\[-.1in] %LINE BELOW HAS ARTIFICIAL HSPACING TO FIX MISMATCH
\hspace{.04in} $w$ \hspace{.04in}           & $\pmix(w)$            & $\pr(w)$         \\[.04in] \hline
& & \\[-.1in]
$abcc$        & \young(abcc)            & \young(abcc) \\[.04in]\hline
& & \\[-.1in]
$bcca$        & \multirow{2}{*}{$\young(a{\bprime}c,:c)$}    & \multirow{3}{*}{$\young(acc,b)$} \\[.04in]\cline{1-1}
& & \\[-.1in]
$bcac$        &                             &\\[.04in]\cline{1-2}
& & \\[-.1in]
$bacc$        & $\young(a{\bprime}cc)$            & \\[.04in]\hline

& & \\[-.1in]
$accb$        & \multirow{2}{*}{$\young(abc,:c)$}    & \multirow{3}{*}{$\young(abc,c)$} \\[.04in]\cline{1-1}
& & \\[-.1in]
$acbc$        &                             &\\[.04in]\cline{1-2}
& & \\[-.1in]
$cabc$        & $\young(ab{\cprime}c)$            & \\[.04in]\hline

& & \\[-.1in]
$ccba$        & \multirow{2}{*}{$\young(a{\bprime}{\cprime},:c)$}    & \multirow{3}{*}{$\young(ac,b,c)$} \\[.04in]\cline{1-1}
& & \\[-.1in]
$cbca$        &                             &\\[.04in]\cline{1-2}
& & \\[-.1in]
$cbac$        & $\young(a{\bprime}b{\cprime})$            & \\[.04in]\hline

& & \\[-.1in]
$cacb$        & \multirow{2}{*}{$\young(ab{\cprime},:c)$}    & \multirow{2}{*}{$\young(ab,cc)$} \\[.04in]\cline{1-1}
& & \\[-.1in]
$ccab$        &                            & \\[.04in]\hline
\end{tabular}
%\]
\vspace{1in}
\[
\begin{tabular}{|c|c|c|}\hline
& & \\[-.1in]
$w$            & $\pmix(w)$            & $\pr(w)$         \\[.04in] \hline
& & \\[-.1in]
$abcd$        & \young(abcd)            & \young(abcd) \\[.04in]\hline

& & \\[-.1in]
$bcda$        & \multirow{2}{*}{$\young(a{\bprime}d,:c)$}    & \multirow{3}{*}{$\young(acd,b)$} \\[.04in]\cline{1-1}
& & \\[-.1in]
$bcad$        &                             &\\[.04in]\cline{1-2}
& & \\[-.1in]
$bacd$        & $\young(a{\bprime}cd)$            & \\[.04in]\hline

& & \\[-.1in]
$acdb$        & \multirow{2}{*}{$\young(abd,:c)$}    & \multirow{3}{*}{$\young(abd,c)$} \\[.04in]\cline{1-1}
& & \\[-.1in]
$acbd$        &                             &\\[.04in]\cline{1-2}
& & \\[-.1in]
$cabd$        & $\young(ab{\cprime}d)$            & \\[.04in]\hline

& & \\[-.1in]
$abdc$        & \multirow{2}{*}{$\young(abc,:d)$}    & \multirow{3}{*}{$\young(abc,d)$} \\[.04in]\cline{1-1}
& & \\[-.1in]
$adbc$        &                             &\\[.04in]\cline{1-2}
& & \\[-.1in]
$dabc$        & $\young(abc{\dprime})$            & \\[.04in]\hline

& & \\[-.1in]
$badc$        & \multirow{2}{*}{$\young(a{\bprime}c,:d)$}    & \multirow{2}{*}{$\young(ac,bd)$} \\[.04in]\cline{1-1}
& & \\[-.1in]
$bdac$        &                            & \\[.04in]\hline

& & \\[-.1in]
$cadb$        & \multirow{2}{*}{$\young(ab{\cprime},:d)$}    & \multirow{2}{*}{$\young(ab,cd)$} \\[.04in]\cline{1-1}
& & \\[-.1in]
$cdab$        &                            & \\[.04in]\hline

& & \\[-.1in]
$cdba$        & \multirow{2}{*}{$\young(a{\bprime}{\cprime},:d)$}    & \multirow{3}{*}{$\young(ad,b,c)$} \\[.04in]\cline{1-1}
& & \\[-.1in]
$cbda$        &                             &\\[.04in]\cline{1-2}
& & \\[-.1in]
$cbad$        & $\young(a{\bprime}{\cprime}d)$            & \\[.04in]\hline

& & \\[-.1in]
$bdca$        & \multirow{2}{*}{$\young(a{\bprime}{\dprime},:c)$}    & \multirow{3}{*}{$\young(ac,b,d)$} \\[.04in]\cline{1-1}
& & \\[-.1in]
$dbca$        &                             &\\[.04in]\cline{1-2}
& & \\[-.1in]
$dbac$        & $\young(a{\bprime}c{\dprime})$            & \\[.04in]\hline

& & \\[-.1in]
$adcb$        & \multirow{2}{*}{$\young(ab{\dprime},:c)$}    & \multirow{3}{*}{$\young(ab,c,d)$} \\[.04in]\cline{1-1}
& & \\[-.1in]
$dacb$        &                             &\\[.04in]\cline{1-2}
& & \\[-.1in]
$dcab$        & $\young(ab{\cprime}{\dprime})$            & \\[.04in]\hline

\raisebox{.8cm}{$dcba$}        & \raisebox{.7cm}{$\young(a{\bprime}{\cprime}{\dprime})$}        & $\young(a,b,c,d)$ \\[.04in]\hline
\end{tabular}
\]

\end{document}